
\documentclass[a4paper,12pt]{amsart}
\usepackage{amsfonts}
\usepackage{amsmath}
\usepackage{amsthm}
\usepackage{amssymb}
\usepackage{mathrsfs}
\usepackage{latexsym}
\usepackage{float}
\usepackage{vmargin}
\setmargrb{3cm}{2.5cm}{2.5cm}{3cm} 
\begin{document}


\newtheorem{theorem}{Theorem}[section]
\newtheorem{definition}[theorem]{Definition}
\newtheorem{lemma}[theorem]{Lemma}
\newtheorem{proposition}[theorem]{Proposition}
\newtheorem{corollary}[theorem]{Corollary}
\newtheorem{example}[theorem]{Example}
\newtheorem{remark}[theorem]{Remark}
\pagestyle{plain}
\title{{P}rojective modules over noncommutative tori are multi-window Gabor frames for modulation spaces}

\author{Franz Luef} 
\thanks{The author was supported by the Marie Curie Excellence grant MEXT-CT-2004-517154 and the Marie Curie Outgoing fellowship PIOF 220464.}
\address{Department of Mathematics\\
University of California, Berkeley\\
CA 94720-3840}
\address{Fakult\"at f\"ur Mathematik\\Universit\"at Wien\\
Nordbergstrasse 15\\ 1090 Wien\\ Austria\\}
\email{franz.luef@univie.ac.at}
\keywords{Gabor frames, noncommutative tori, twisted group $C^*$-algebras, spectral invariance, standard Hilbert $C^*$-module frames}
\subjclass{Primary 42C15, 46L08; Secondary 43A20, 46L89}

\begin{abstract}
In the present investigation we link noncommutative geometry over noncommutative tori with Gabor analysis, where the first has its roots in operator algebras and the second in time-frequency analysis. We are therefore in the position to invoke modern methods of operator algebras, e.g. topological stable rank of Banach algebras, to display the deeper properties of Gabor frames. Furthermore, we are able to extend results due to Connes and Rieffel on projective modules over noncommutative tori to Banach algebras, which arise in a natural manner in Gabor analysis. The main goal of this investigation is twofold: (i) an interpretation of projective modules over noncommutative tori in terms of Gabor analysis and (ii) to show that the Morita-Rieffel equivalence between noncommutative tori is the natural framework for the duality theory of Gabor frames. More concretely, we interpret generators of projective modules over noncommutative tori as the Gabor atoms of multi-window Gabor frames for modulation spaces. Moreover, we show that this implies the {\it existence} of good multi-window Gabor frames for modulation spaces with Gabor atoms in e.g.  Feichtinger's algebra or in Schwartz space. 
\end{abstract}
\maketitle \pagestyle{myheadings} \markboth{F. Luef}{Projective modules over noncommutative tori are multi-window Gabor frames for modulation spaces}
\thispagestyle{empty}


\section{Introduction}
We start with a short review of the first theme of our study: projective modules over $C^*$-algebras and the relevance of Rieffel's work on Morita equivalence of operator algebras.
\par
Rieffel introduced (strong) Morita equivalence for $C^*$-algebras in \cite{ri74-1,ri74-2}, which we call Rieffel--Morita equivalence. The seminal work of Rieffel was motivated by his formulation of Mackey's imprimitivity theorem in terms of $C^*$-algebras. Rieffel--Morita equivalence allows a  classification of $C^*$-algebras which is weaker than a classification up to isomorphisms. The classification of unital $C^*$-algebras with respect to Rieffel--Morita equivalence requires the construction of projective modules over $C^*$-algebras. 
During the 1980's, the research of many operator algebraists concerned projective modules and K-theory for C*-algebras. Another reason for the relevance of projective modules has its origins in Connes' theory of noncommutative geometry \cite{co94-1}. In noncommutative geometry projective modules over noncommutative $C^*$-algebras appear as noncommutative analogue of vector bundles over manifolds, and projective modules over smooth subalgebras of a $C^*$-algebra are viewed as noncommutative analogue of smooth vector bundles over manifolds,\cite{co81-2}. Recall that Connes calls a subalgebra of a $C^*$-algebra smooth if it is stable under the holomorphic function calculus. 
\par
As a demonstration of the power of noncommutative geometry Connes has constructed projective modules over smooth noncommutative tori in \cite{co80}. Rieffel extended Connes' projective modules over noncommutative tori to higher-dimensional noncommutative tori in \cite{ri88}. After these groundbreaking results of Connes and Rieffel, projective modules over noncommutative tori found many applications in mathematics and physics, e.g. Bellissard's interpretation of the integer quantum Hall effect \cite{bescva94}, the work of Marcolli and Mathai on the fractional quantum Hall effect, or the relevance of Rieffel--Morita equivalence of operator algebras in mathematical physics \cite{la99-1}. 
\par
The classification of noncommutative tori up to Rieffel-Morita equivalence relies on the construction of projective modules over noncommutative tori. Rieffel found a general method to construct such in \cite{ri88}. In \cite{lu05-1,lu07} we have shown that Rieffel's construction of projective modules over noncommutative tori \cite{ri88} has a natural formulation in terms of Gabor analysis and we were able to extend his construction to the setting of twisted group algebras. The present work is a continuation of this line of research. We especially want to stress that Connes' theorem \cite{co81-2} on the correspondence between projective modules over a $C^*$-algebra and projective modules over smooth subalgebras of a $C^*$-algebra for noncommutative tori appears naturally in the research about good window classes in Gabor analysis. In joint work with Manin we have shown the relevance of this interpretation for the understanding of quantum theta functions in \cite{luma09}.
\par
Before we are in the position to describe the main theorems of our investigation we want to give a brief exposition of Gabor analysis, the other theme of our investigation. Gabor analysis arose out of Gabor's seminal work in \cite{ga46} on the foundation of information theory. After the groundbreaking work of Daubechies, Grossmann and Meyer, frames for Hilbert spaces have become central objects in signal analysis \cite{dagrme86}, especially wavelets and Gabor frames. In the last years various other classes of frames have been introduced by workers in signal analysis, e.g. curvelets, ridgelets and shearlets. The relevance of Hilbert $C^*$-modules for signal analysis was pointed out out by Packer and Rieffel \cite{pari03,pari04} and Woods in \cite{wo04} for wavelets. 
\par
A Gabor system $\mathcal{G}(g,\Lambda)=\{\pi(\lambda)g:\lambda\in\Lambda\}$ consists of a {\it Gabor atom} $g\in L^2(\mathbb{R}^d)$ and a lattice $\Lambda$ in $\mathbb{R}^d\times\widehat{\mathbb{R}}^d$, where $\pi(\lambda)$ denotes the {\it time-frequency shift} $\pi(\lambda)f(t)=e^{2\pi i\lambda_\omega\cdot t}f(t-\lambda_x)$ for a point $\lambda=(\lambda_x,\lambda_\omega)$ in $\Lambda$. If there exist finite constants $A,B>0$ such that
\begin{equation}
  A\|f\|_2^2\le\sum_{\lambda\in\Lambda}|\langle f,\pi(\lambda)g\rangle|^2\le B\|f\|_2^2
\end{equation}
holds for all $f\in L^2(\mathbb{R}^d)$, then $\mathcal{G}(g,\Lambda)$ is called a {\it Gabor frame} for $L^2(\mathbb{R}^d)$. There is a natural operator associated with a Gabor system $\mathcal{G}(g,\Lambda)$, namely the {\it Gabor frame operator} $S_{g,\Lambda}$ defined as follows:
\begin{equation}
  S_{g,\Lambda}f=\sum_{\lambda\in\Lambda}\langle f,\pi(\lambda)g\rangle\pi(\lambda)g,~~\text{for}~~f\in L^2(\mathbb{R}^d).
\end{equation}
The Gabor frame operator $S_{g,\Lambda}$ is a self-adjoint operator on $L^2(\mathbb{R}^d)$. If $\mathcal{G}(g,\Lambda)$ is a Gabor frame for $L^2(\mathbb{R}^d)$, then an element $f\in L^2(\mathbb{R}^d)$ has a decomposition with respect to the Gabor system $\mathcal{G}(g,\Lambda)$. More precisely, 
\begin{eqnarray*}
  f&=&\sum_{\lambda\in\Lambda}\langle f,\pi(\lambda)(S_{g,\Lambda})^{-1}g\rangle\pi(\lambda)g\\
   &=&\sum_{\lambda\in\Lambda}\langle f,\pi(\lambda)g\rangle\pi(\lambda)(S_{g,\Lambda})^{-1}g\\
   &=&\sum_{\lambda\in\Lambda}\langle f,\pi(\lambda)(S_{g,\Lambda})^{-1/2}g\rangle\pi(\lambda)(S_{g,\Lambda})^{-1/2}g 
\end{eqnarray*}   
for all $f\in L^2(\mathbb{R}^d)$. We call $g_0:=(S_{g,\Lambda})^{-1}g$ the {\it canonical dual} Gabor atom and $\tilde g:=(S_{g,\Lambda})^{-1/2}g$ the {\it canonical tight} Gabor atom of a Gabor frame $\mathcal{G}(g,\Lambda)$. Therefore the invertibility of the Gabor frame operator is essential for the decomposition of a function in terms of Gabor frames. Janssen proved that for Gabor frames $\mathcal{G}(g,\Lambda)$ for $L^2(\mathbb{R}^d)$ with $g\in\mathscr{S}(R^d)$ their canonical dual and tight Gabor atoms $g_0,\tilde{g}$ are again in $\mathscr{S}(\mathbb{R}^d)$. In other words he demonstrated that Gabor frames with good Gabor atoms have dual atoms of the same quality, i.e. all ingredients of the reconstruction formulas are elements of $\mathscr{S}(\mathbb{R}^d)$. The key ingredient in the proof of this deep theorem is the so-called {\it Janssen representation } of the Gabor frame operator \cite{ja95}, which relies on the fact that a Gabor frame operator $S_{g,\Lambda}$ commutes with time-frequency shifts $\pi(\lambda)$ for $\lambda$ in $\Lambda$, i.e. 
 $\pi(\lambda)S_{g,\Lambda}=S_{g,\Lambda} \pi(\lambda)$, for all $\lambda\in\Lambda$.
These commutation relations for Gabor frame operators are the very reason for the rich structure of Gabor systems and the differences between Gabor frames and wavelets, see e.g. \cite{gr01} for further information on this topic. 
\par
The Janssen representation of a Gabor frame operator allows one to express the Gabor frame operator $S_{g,\Lambda}$ in terms of time-frequeny shifts of  the {\it adjoint lattice} $\Lambda^\circ$. The adjoint lattice ${\Lambda^\circ}$ consists of all time-frequency shifts of $\mathbb{R}^{2d}$ that commute with all time-frequency shifts of $\Lambda$, see Section $3$ for an extensive discussion. Now, the Janssen representation of the Gabor frame operator $S_{g,\Lambda}$ of $\mathcal{G}(g,\Lambda)$ with $g\in\mathscr{S}(\mathbb{R}^d)$ is the following 
\begin{equation}
  S_{g,\Lambda}f=\mathrm{vol}(\Lambda)^{-1}\sum_{\lambda^\circ\in\Lambda^\circ}\langle g,\pi(\lambda^\circ)g\rangle\pi(\lambda^\circ)f 
\end{equation}
where $\mathrm{vol}(\Lambda)$ denotes the volume of a fundamental domain of $\Lambda$. The Janssen representation links the original Gabor system $\mathcal{G}(g,\Lambda)$ with a dual system with respect to the adjoint lattice in such a way that the original Gabor frame operator becomes a superposition of time-frequency shifts over the adjoint lattice $\Lambda^\circ$ acting on the function $f$. Therefore Janssen introduced the following Banach algebras \cite{ja95} for $s\ge 0$, where the multiplication is given by twisted convolution, the so-called {\it noncommutative Wiener algebras}:
\begin{equation}
 \mathcal{A}^1_s(\Lambda,c)=\big\{\sum_{\lambda\in\Lambda}a(\lambda)\pi(\lambda):\sum_{\lambda\in\Lambda}|a(\lambda)|(1+|\lambda|^2)^{s/2}<\infty\big\},
\end{equation}
and the smooth noncommutative torus
\begin{equation}
  \mathcal{A}^\infty(\Lambda,c)=\bigcap_{s\ge 0}\mathcal{A}^1_s(\Lambda,c),
\end{equation}
where $c$ refers to the cocycle arising in the composition of time-frequency shifts, see Section $2$ for the explicit expression. Actually Janssen's original approach just worked for lattices $\Lambda=\alpha\mathbb{Z}^d\times\beta\mathbb{Z}^d$ with $\alpha\beta$ a rational number. Gr\"ochenig and Leinert were able to settle the general case in \cite{grle04} by interpreting the result of Janssen as the spectral invariance of $\mathcal{A}^\infty(\Lambda,c)$ in the {\it noncommutative torus} $C^*(\Lambda,c)$, the twisted group $C^*$-algebra of $\Lambda$. Moreover Gr\"ochenig and Leinert were able to show that $\mathcal{A}^1_s(\Lambda,c)$ is a spectral invariant subalgebra of $C^*(\Lambda,c)$. Note that the spectral invariance of a Banach algebra in a $C^*$-algebra implies its stability under the holomorphic function calculus. Therefore $\mathcal{A}^1_s(\Lambda,c)$ and $\mathcal{A}^\infty(\Lambda,c)$ are smooth subalgebras of $C^*(\Lambda,c)$ in the sense of Connes. 
\par
Later we observed in \cite{lu06} that Janssen's result about the spectral invariance of $\mathcal{A}^\infty(\alpha\mathbb{Z}^d\times\beta\mathbb{Z}^d,c)$ in $C^*(\Lambda,c)$ for irrational $\alpha\beta$ had been proved by Connes in his seminal work on noncommutative geometry \cite{co80}. Connes called $\mathcal{A}^\infty(\alpha\mathbb{Z}^d\times\beta\mathbb{Z}^d,c)$ a {\it smooth noncommutative torus} and he considered it the noncommutative analogue of smooth functions on the torus.
\par 
Feichtinger and Gr\"ochenig demonstrated in \cite{fegr89,fegr97} that Gabor frames $\mathcal{G}(g,\Lambda)$ with atoms $g$ in Feichtinger's algebra $M^1(\mathbb{R}^d)$ or in Schwartz's space of test functions $\mathscr{S}(\mathbb{R}^d)$ are Banach frames for the class of modulation spaces. In other words $M^1_s(\mathbb{R}^d)$ and $\mathscr{S}(\mathbb{R}^d)$ are {\it good} classes of Gabor atoms. The crucial tool for these results is the spectral invariance of the noncommutative Wiener algebras and of the smooth noncommutative torus in $C^*(\Lambda,c)$. In a more general setting Gr\"ochenig introduced in \cite{gr04-1,fogr05} the {\it localization theory} for families of Banach spaces, see also \cite{bacahela06} for an approach to localization theory not based on the spectral invariance of Banach algebras.
\par
The good classes of Gabor atoms $M^1_s(\mathbb{R}^d)$ and $\mathscr{S}(\mathbb{R}^d)$ turned out to be the natural building blocks in the construction of projective modules over the noncommutative torus $C^*(\Lambda,c)$. More precisely, in \cite{ri88} Rieffel demonstrated that $\mathscr{S}(\mathbb{R}^d)$ becomes an inner product $\mathcal{A}^\infty(\Lambda,c)$-module for the left action of $\mathcal{A}^\infty(\Lambda,c)$ on $\mathscr{S}(\mathbb{R}^d)$ defined by
\begin{equation*}
  \pi_\Lambda({\bf a})\cdot g=\Big[\sum_{\lambda\in\Lambda}a(\lambda)\pi(\lambda)\Big]g,~~\text{for}~~{\bf a}=\big(a(\lambda)\big)\in\mathscr{S}(\Lambda), g\in\mathscr{S}(\mathbb{R}^d),
\end{equation*}
and the $\mathcal{A}^\infty(\Lambda,c)$-valued inner product
\begin{equation*}
  {_\Lambda}\langle f,g\rangle=\sum_{\lambda\in\Lambda}\langle f,\pi(\lambda)g\rangle\pi(\lambda)~~\text{for}~~f,g\in\mathscr{S}(\mathbb{R}^d).
\end{equation*}
Furthermore, the completion of $\mathscr{S}(\mathbb{R}^d)$ in the norm $_{\Lambda}\|f\|=\| _{\Lambda}\langle f,f\rangle\|_{\mathrm{op}}^{1/2}$ for $f\in\mathscr{S}(\mathbb{R}^d)$ yields a left Hilbert $C^*(\Lambda,c)$-module. In \cite{lu05-1,lu07} we have shown that Rieffel' construction holds for the modulation spaces $M^1_s(\mathbb{R}^d)$ and the noncommutative Wiener algebras $\mathcal{A}^1_s(\Lambda,c)$ for all $s\ge 0$. 
\par
Projective modules over $C^*$-algebra have a natural description in terms of {\it module frames}, which was first noted by Rieffel for finitely generated projective modules and in the general case by Frank and Larson in \cite{frla02}. In \cite{ri88} Rieffel formulated Connes' theorem about projective modules over smooth noncommutative tori in terms of module frames with elements in $\mathscr{S}(\mathbb{R}^d)$. One of our main theorems is the interpretation of Rieffel's result about module frames for projective modules over noncommutative tori as {\it multi-window Gabor frames} for $L^2(\mathbb{R}^d)$ with Gabor atoms in $M^1_s(\mathbb{R}^d)$ and $\mathscr{S}(\mathbb{R}^d)$. Consequently the classification of Rieffel-Morita equivalence for noncommutative tori has as a most important consequence the {\it existence} of multi-window Gabor frames with atoms in $M^1_s(\mathbb{R}^d)$ and $\mathscr{S}(\mathbb{R}^d)$. 
\par
In \cite{gr07} a general class of noncommutative Wiener algebras $\mathcal{A}^1_v(\Lambda,c)$ was studied and the main theorem about $\mathcal{A}^1_v(\Lambda,c)$ is that $\mathcal{A}^1_v(\Lambda,c)$ is spectrally invariant in $C^*(\Lambda,c)$ if and only if $v$ is a GRS-weight, see Section $2$. The main reason for these investigations of Gr\"ochenig was to classify the class of good Gabor atoms. In the present investigation we want to stress that this provides the natural framework for the construction of projective modules over the subalgebras $\mathcal{A}^1_v(\Lambda,c)$ and the generalized smooth noncomutative tori $\mathcal{A}^\infty_v(\Lambda,c)=\bigcap_{s\ge 0}\mathcal{A}^1_{v^s}(\Lambda,c)$ of noncommutative tori for $v$ a GRS-weight. If $v$ is a weight of polynomial growth, we recover the classical theorems of Connes and Rieffel as special case of our main results. 
\par
The paper is organized as follows: in Section 2 we discuss the realization of noncommutative tori as the twisted group $C^*$-algebra $C^*(\Lambda,c)$ of a lattice $\Lambda$ and its subalgebras: the noncommutative Wiener algebras $\mathcal{A}^1_v(\Lambda,c)$ and the generalized smooth noncommutative tori $\mathcal{A}^\infty_v(\Lambda,c)$. These results are strongly influenced by the work of Gr\"ochenig and Leinert on the spectral invariance of noncommutative Wiener algebras $\mathcal{A}^1_v(\Lambda,c)$ in $C^*(\Lambda,c)$ in \cite{grle04,gr07}. We determine the topological stable rank of these subalgebras of $C^*(\Lambda,c)$, which is based on the seminal work of Rieffel in \cite{ri83-1} and the results of Badea on the topological stable rank of spectrally invariant algebras in \cite{ba98-4}. Furthermore, we recall some basic facts about time-frequency analysis and weights on the time-frequency plane. In Section 3 we construct projective modules over noncommutative Wiener algebras $\mathcal{A}^1_v(\Lambda,c)$ and smooth noncommutative tori $\mathcal{A}^\infty_v(\Lambda,c)$, and we use modulation spaces and projective limits of weighted modulation spaces as basic building blocks for the equivalence bimodules over these subalgebras of $C^*(\Lambda,c)$. The main result is classification of $\mathcal{A}^1_v(\Lambda,c)$ and $\mathcal{A}^\infty_v(\Lambda,c)$ up to Rieffel-Morita equivalence. In Section 4 we point out that projective modules over $\mathcal{A}^1_v(\Lambda,c)$ and $\mathcal{A}^\infty_v(\Lambda,c)$ have a natural description in terms of multi-window Gabor frames for $L^2(\mathbb{R}^d)$. Consequently Connes' work about projective modules over smooth subalgebras yields in particular the existence of multi-window Gabor frames with atoms in Feichtinger's algebra or Schwartz space for modulation spaces, which is an interesting consequence of our investigations with great potential for applications in Gabor analysis. Furthermore we invoke a result of Blackadar, Kumjian and Roerdam on the topological stable rank of simple noncommutative tori \cite{blkuro92} to demonstrate that the set of Gabor frames for completely irrational lattices and good windows is dense in $C^*(\Lambda^\circ,c)$.
\section{Noncommutative Wiener algebras and noncommutative tori}
The principal objects of our interest are twisted group algebras for lattices in the time-frequency plane and its enveloping $C^*$-algebras, the twisted group $C^*$-algebras a.k.a. noncommutative tori. Let $\Lambda$ be a lattice in $\mathbb{R}^{2d}$ and $c$ a continuous 2-cocycle with values in $\mathbb{T}$. Then the {\it twisted group algebra} $\ell^1(\Lambda,c)$ is $\ell^1(\Lambda)$ with  
{\it twisted convolution} $\natural$ as multiplication and $^*$ as involution. More precisely, let ${\bf a}=(a(\lambda))_\lambda$ and ${\bf b}=(b(\lambda))_\lambda$ be in $\ell^1(\Lambda)$. Then the {\it twisted convolution} of ${\bf a}$ and ${\bf b}$ is defined by
\begin{equation}
  {\bf a}\natural{\bf b}(\lambda)=\sum_{\mu\in\Lambda}a(\mu)b(\lambda-\mu)c(\mu,\lambda-\mu)~~\text{for}~~\lambda,\mu\in\Lambda,
\end{equation} 
and {\it involution} ${\bf a}^*=\big(a^*(\lambda)\big)$ of ${\bf a}$ given by
\begin{equation}
  a^*(\lambda)=\overline{c(\lambda,\lambda)a(-\lambda)}~~\text{for}~~\lambda\in\Lambda.
\end{equation}
More generally, we want to deal with twisted weighted group algebras $\ell^1_v(\Lambda,c)$ for a suitable weight. A weight $v$ on $\mathbb{R}^{2d}$ is a non-negative measurable function, which satisfies the following properties 
\begin{enumerate}
  \item[(1)] $v$ is {\it submultiplicative}, i.e. $v(x+y,\omega+\eta)\le v(x,\omega)v(y,\eta)$ for all $(x,\omega),(y,\eta)\in\mathbb{R}^{2d}$.
  \item[(2)] $v(x,\omega)\ge 1$ and $v(-x,-\omega)=v(x,\omega)$ for all $(x,\omega)\in\mathbb{R}^{2d}$.
\end{enumerate}
For the rest of the paper we only consider weights $v$ satisfying the conditions $(1)$ and $(2)$, because under these conditions $\ell^1_v(\Lambda)=\{{\bf a}| \sum |a(\lambda)|v(\lambda)=:\|{\bf a}\|_{\ell^1_v}<\infty\}$ has nice properties.
\begin{lemma}\label{ContInvol}
Let $v$ be a weight satisfying the properties $(1)$ and $(2)$. Then $(\ell^1_v(\Lambda),c)$ is a Banach algebra with continuous involution. 
\end{lemma}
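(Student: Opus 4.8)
The plan is to verify the defining properties one at a time; the key observation is that the hypotheses (1) and (2) on $v$, together with the fact that $c$ takes values in $\mathbb{T}$, are exactly what is needed to reduce everything to the familiar estimates for weighted $\ell^1$-group algebras. First, $\ell^1_v(\Lambda)$ is a Banach space: since $1\le v<\infty$, the map ${\bf a}\mapsto\bigl(a(\lambda)v(\lambda)\bigr)_{\lambda}$ is an isometric isomorphism of $\bigl(\ell^1_v(\Lambda),\|\cdot\|_{\ell^1_v}\bigr)$ onto $\ell^1(\Lambda)$, so completeness is inherited.

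Next, submultiplicativity of the norm under twisted convolution. For ${\bf a},{\bf b}\in\ell^1_v(\Lambda)$ one estimates
\begin{align*}
\|{\bf a}\natural{\bf b}\|_{\ell^1_v}
 &=\sum_{\lambda\in\Lambda}\Bigl|\sum_{\mu\in\Lambda}a(\mu)\,b(\lambda-\mu)\,c(\mu,\lambda-\mu)\Bigr|\,v(\lambda)\\
 &\le\sum_{\lambda\in\Lambda}\sum_{\mu\in\Lambda}|a(\mu)|\,|b(\lambda-\mu)|\,v(\mu)\,v(\lambda-\mu),
\end{align*}
using $|c(\mu,\lambda-\mu)|=1$ and $v(\lambda)\le v(\mu)\,v(\lambda-\mu)$ from property (1). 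By Tonelli's theorem the right-hand side equals $\|{\bf a}\|_{\ell^1_v}\,\|{\bf b}\|_{\ell^1_v}$, which in particular shows that the series defining ${\bf a}\natural{\bf b}(\lambda)$ converges absolutely for each $\lambda$ and that ${\bf a}\natural{\bf b}\in\ell^1_v(\Lambda)$. Bilinearity is clear, and associativity of $\natural$ — which does not involve $v$ at all — follows by expanding $\bigl(({\bf a}\natural{\bf b})\natural{\bf d}\bigr)(\lambda)$ and $\bigl({\bf a}\natural({\bf b}\natural{\bf d})\bigr)(\lambda)$, reindexing so that both become a double sum over $(\mu,\nu)$ of $a(\mu)\,b(\nu)\,d(\lambda-\mu-\nu)$ against a product of two values of $c$, and invoking the $2$-cocycle identity to match the two products. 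Thus $(\ell^1_v(\Lambda),\natural)$ is a Banach algebra.

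Finally, the involution. That $^*$ is a conjugate-linear, anti-multiplicative involution, i.e. $({\bf a}^*)^*={\bf a}$ and $({\bf a}\natural{\bf b})^*={\bf b}^*\natural{\bf a}^*$, is again purely a computation with the cocycle relations satisfied by $c$, not involving the weight. Continuity — in fact isometry — is immediate from property (2): since $|c(\lambda,\lambda)|=1$ and $v(-\lambda)=v(\lambda)$,
\[
\|{\bf a}^*\|_{\ell^1_v}=\sum_{\lambda\in\Lambda}\bigl|\overline{c(\lambda,\lambda)\,a(-\lambda)}\bigr|\,v(\lambda)
 =\sum_{\lambda\in\Lambda}|a(-\lambda)|\,v(-\lambda)=\|{\bf a}\|_{\ell^1_v}.
\]
I do not expect a genuine obstacle here: the only step requiring some care is the cocycle bookkeeping in the proofs of associativity and of $({\bf a}\natural{\bf b})^*={\bf b}^*\natural{\bf a}^*$, which is routine once the $2$-cocycle identity is written out; the role of the two hypotheses is transparent, submultiplicativity (1) yielding the norm estimate and symmetry (2) yielding the isometry of the involution, while $|c|\equiv 1$ lets both estimates proceed exactly as in the untwisted case.
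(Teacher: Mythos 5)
Your proof is correct. The core estimate — submultiplicativity of the $\ell^1_v$-norm under twisted convolution via $|c|\equiv 1$ and $v(\lambda)\le v(\mu)v(\lambda-\mu)$ — is exactly the computation in the paper, and your additional remarks on completeness and on associativity via the $2$-cocycle identity fill in standard details the paper leaves implicit. Where you genuinely diverge is in the treatment of the involution: you compute directly that $\|{\bf a}^*\|_{\ell^1_v}=\|{\bf a}\|_{\ell^1_v}$ from $|c(\lambda,\lambda)|=1$ and $v(-\lambda)=v(\lambda)$, so the involution is an isometry. The paper instead argues indirectly: it posits a continuity constant $C$ with $\|{\bf a}^*\|_{\ell^1_v}\le C\|{\bf a}\|_{\ell^1_v}$ and uses $\|{\bf a}^{**}\|_{\ell^1_v}=\|{\bf a}\|_{\ell^1_v}$ to force $C=1$, tying this to the symmetry of $v$. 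Your direct computation is cleaner and makes the role of hypothesis (2) transparent, whereas the paper's argument is phrased so that it reads almost as a derivation of the symmetry condition rather than a use of it; the two are logically equivalent here, but yours is the more self-contained verification.
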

\begin{proof}
  Let ${\bf a}$ and ${\bf b}$ in $\ell^1_v(\Lambda)$. Then by the submultiplicativity of $v$ we have that:
  \begin{eqnarray*}
    \|{\bf a}\natural{\bf b}\|_{\ell^1_v}&=&\sum_{\lambda}|\sum_{\mu}a(\mu)b(\lambda-\mu)c(\mu,\lambda-\mu)|v(\lambda)\\
                                         &\le&\sum_{\lambda}\sum_{\mu}|a(\mu)|v(\mu)|b(\lambda-\mu)|v(\lambda-\mu)=\|{\bf a}\|_{\ell^1_v}\|{\bf b}\|_{\ell^1_v}. 
  \end{eqnarray*}
Consequently $\ell^1_v(\Lambda,c)$ is a Banach algebra with respect to twisted convolution. Note that $\ell^1_v(\Lambda,c)$ has a continuous involution if and only if $\|{\bf a}^*\|_{\ell^1_v}\le C\|{\bf a}\|_{\ell^1_v}$ for $C>0$. Since $\|{\bf a}^{**}\|_{\ell^1_v}\le C\|{\bf a}^*\|_{\ell^1_v}\le C^2\|{\bf a}\|_{\ell^1_v}$, and $\|{\bf a}^{**}\|_{\ell^1_v}=\|{\bf a}\|_{\ell^1_v}$, it follows that $C=1$ and $v(-\lambda)=v(\lambda)$. This completes our proof.
\end{proof}
 We refer the interested reader to the survey article \cite{gr07} of Gr\"ochenig for a thorough treatment of weights in time-frequency analysis. 
\par
Now, we want to represent $\ell^1_v(\Lambda,c)$ as superposition of time-frequency shifts on $L^2(\mathbb{R}^d)$. For $(x,\omega)\in\mathbb{R}^{2d}$ we define the {\it time-frequency shift} $\pi(x,\omega)f(t)$ of $f$ by $$\pi(x,\omega)f(t)=M_\omega T_xf(t),$$ where $T_xf(t)=f(t-x)$ is the {\it translation} by $x\in\mathbb{R}^d$ and $M_\omega f(t)=e^{2\pi i t\cdot\omega}f(t)$ is the {\it modulation} by $\omega\in\mathbb{R}^d$. 
\par
Observe, that $(x,\omega)\mapsto\pi(x,\omega)$ is a projective representation of $\mathbb{R}^{2d}$ on $L^2(\mathbb{R}^d)$. This is essentially due to  the following commutation relation between translation and modulation operators:
\begin{equation}\label{TFcom}
  M_\omega T_x= e^{2\pi ix\omega}T_xM_\omega ~~\text{for}~~(x,\omega)\in\mathbb{R}^{2d}.
\end{equation}
The commutation relation \eqref{TFcom} yields a composition law for time-frequency shifts $\pi(x,\omega)$ and $\pi(y,\eta)$:
\begin{eqnarray}
  \pi(x,\omega)\pi(y,\eta)&=&c\big((x,\omega),(y,\eta)\big)\overline{c}\big((y,\eta),(x,\omega)\big) \pi(y,\eta)\pi(x,\omega)\\
  &=&c_{\mathrm{symp}}\big((x,\omega),(y,\eta)\big)\pi(y,\eta)\pi(x,\omega),
\end{eqnarray}
where $c$ denotes the continuous $2$-cocycle $c$ on $\mathbb{R}^{2d}$ defined by $c\big((x,\omega),(y,\eta)\big)=e^{2\pi i y\cdot\omega}$ for $(x,\omega),(y,\eta)\in\mathbb{R}^{2d}$ and $c_{\mathrm{symp}}$ is an {\it anti-symmetric bicharacter} or {\it symplectic bicharacter} on $\mathbb{R}^{2d}$. More explicitly, $c_{\mathrm{symp}}$ is given by 
\begin{equation}
 c_{\mathrm{symp}}\big((x,\omega),(y,\eta)\big)=e^{2\pi i(y\cdot\omega-x\cdot\eta)}=e^{2\pi i\Omega\big((x,\omega),(y,\eta)\big)},
\end{equation} 
where $\Omega\big((x,\omega),(y,\eta)\big)=y\cdot\omega-x\cdot\eta$ is the standard {\it symplectic form} on $\mathbb{R}^{2d}$. 
\par
For a lattice $\Lambda$ in $\mathbb{R}^d\times\widehat{\mathbb{R}}^d$ the mapping of $\lambda\mapsto\pi(\lambda)$ is a projective representation of $\Lambda$ on $L^2(\mathbb{R}^d)$. Now, a projective representations of a lattice $\Lambda$ in $\mathbb{R}^{2d}$ gives a non-degenerate involutive representation of $\ell^1_v(\Lambda,c)$ by
\begin{equation*}
  \pi_\Lambda({\bf a}):=\sum_{\lambda\in\Lambda}a(\lambda)\pi(\lambda)~~\text{for}~~{\bf a}=(a(\lambda))\in\ell^1_v(\Lambda).
\end{equation*}
In other words, $\pi_\Lambda({\bf a}\natural{\bf b})=\pi_\Lambda({\bf a})\pi_\Lambda({\bf b})$ and $\pi_\Lambda(\bf{a}^*)=\pi_\Lambda({\bf a})^*$. Moreover, this involutive representation of $\ell^1_v(\Lambda,c)$ is {\it faithful}, i.e. $\pi_\Lambda({\bf a})=0$ implies ${\bf a}=0$. We refer the reader to \cite{ri88} for a proof of the last assertion.
\par  
We denote the image of the map ${\bf a}\mapsto\pi_\Lambda({\bf a})$ by $\mathcal{A}^1_v(\Lambda,c)$. More explicitly, 
\begin{equation*}
  \mathcal{A}^1_v(\Lambda,c)=\{A\in\mathcal{B}(L^2(\mathbb{R}^d)):A=\sum_{\lambda}a(\lambda)\pi(\lambda),\|{\bf a}\|_{\ell^1_v}<\infty\}
\end{equation*}
is an involutive Banach algebra with respect to the norm $\|A\|_{\mathcal{A}^1_v(\Lambda)}=\sum_{\lambda}|a(\lambda)|v(\lambda)$. We call $\mathcal{A}^1_v(\Lambda,c)$ the {\it noncommutative Wiener algebra} because it is the noncommutative analogue of Wiener's algebra of Fourier series with absolutely convergent Fourier coefficients. 
\par
The involutive Banach algebra $\ell^1_v(\Lambda,c)$ is not a $C^*$-algebra. There exists a canonical construction, which associates to an involutive Banach algebra $\mathcal{A}$ a $C^*$-algebra $C^*(\mathcal{A})$, the {\it universal enveloping $C^*$-algebra} of $\mathcal{A}$. If ${\bf a}\in\ell^1_v(\Lambda,c)$, then one defines a $C^*$-algebra norm $\|{\bf a}\|_{C^*(\Lambda,c)}$ as the supremum over the norms of all involutive representations of $\ell^1_v(\Lambda,c)$ and the {\it twisted group $C^*$-algebra} $C^*(\Lambda,c)$ as the completion of $\ell^1_v(\Lambda,c)$ by $\|.\|_{C^*(\Lambda,c)}$. In the literature ${C^*(\Lambda,c)}$ is also known as {\it noncommutative torus} or {\it quantum torus}. 
If we represent ${C^*(\Lambda,c)}$ as a subalgebra of bounded operators on $L^2(\mathbb{R}^d)$, then $\mathcal{A}^1_v(\Lambda,c)$ is a dense subalgebra of ${C^*(\Lambda,c)}$. 
\par
Now we use the noncommutative Wiener algebras $\mathcal{A}^1_v(\Lambda,c)$ as building blocks for a class of subalgebras  $\mathcal{A}^\infty_v(\Lambda,c)$ of $C^*(\Lambda,c)$ that are noncommutative analogues of smooth functions on a compact manifold. More concretely, we want to deal with smooth noncommutative tori with respect to a general submultiplicative weight. If $v$ is a submultiplicative weight, then we call $\mathcal{A}^{\infty}_v(\Lambda,c)=\bigcap_{s\ge 0}\mathcal{A}^1_{v^s}(\Lambda,c)$ a {\it generalized smooth noncommutative torus}. The subalgebra $\mathcal{A}^{\infty}_v(\Lambda,c)$ of $C^*(\Lambda,c)$ is a complete locally convex algebra whose topology is defined by a family of submultiplicative seminorms $\{\|.\|_{\mathcal{A}^1_{v^s}}|s\ge 0\}$ with 
\begin{equation*}
  \|A\|_{\mathcal{A}^1_{v^s}}=\sum_{\lambda\in\Lambda}|a(\lambda)|v^s(\lambda)~~\text{for}~~A\in\mathcal{A}^\infty_v(\Lambda,c).
\end{equation*} 
In the literature a complete locally convex algebra $\mathcal{A}$ equipped with a family of submultiplicative seminorms is called a {\it locally convex m-algebra} or {\it m-algebra}. It is well-known that m-algebras are precisely the projective limits of Banach algebras. An important class of  m-algebras are Frechet algebras with submultiplicative seminorms. 
\par
By construction several special properties of $\mathcal{A}^{\infty}_v(\Lambda,c)$ are consequences of the structure of $\mathcal{A}^1_{v^s}(\Lambda,c)$, e.g. the spectral invariance in $C^*(\Lambda,c)$. 
\par
Recall that a unital Banach algebra $\mathcal{A}$ is {\it spectrally invariant} in a unital Banach algebra $\mathcal{B}$ with common unit, if for  $A\in\mathcal{A}$ with $A^{-1}\in\mathcal{B}$ implies $A^{-1}\in\mathcal{A}$. The spectral invariance of $\mathcal{A}^1_v(\Lambda,c)$ in $C^*(\Lambda,c)$ was investigated by Gr\"ochenig and Leinert in \cite{grle04}. Their main result shows that this problem only depends on properties of the weight $v$, see \cite{gr07} for the following formulation:
\begin{theorem}[Gr\"ochenig-Leinert]\label{grle}
Let $\Lambda$ be a lattice in $\mathbb{R}^{2d}$. Then the noncommutative Wiener algebra $\mathcal{A}^1_v(\Lambda,c)$ is spectrally invariant in $C^*(\Lambda,c)$ if and only if $v$ is a GRS-weight, i.e. $\lim v(n\lambda)^{1/n}=1$ for all $\lambda\in\Lambda$.
\end{theorem}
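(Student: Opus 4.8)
The plan is to establish the two implications separately; the forward one, that spectral invariance forces $v$ to be a GRS-weight, is elementary. Suppose $\mathcal{A}^1_v(\Lambda,c)$ is spectrally invariant in $C^*(\Lambda,c)$, and fix $\lambda\in\Lambda$. The operator $\pi(\lambda)$ is unitary, so for every $z$ with $|z|>1$ the resolvent is given by the Neumann series $(\pi(\lambda)-z)^{-1}=-\sum_{n\ge 0}z^{-n-1}\pi(\lambda)^{n}$, convergent in $C^*(\Lambda,c)$; since $\pi(\lambda)^{n}=\sigma_n(\lambda)\,\pi(n\lambda)$ with $|\sigma_n(\lambda)|=1$, its coefficient sequence lies in $\ell^1(\Lambda)$. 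Spectral invariance places this resolvent in $\mathcal{A}^1_v(\Lambda,c)$, and since $\pi_\Lambda$ is faithful the representing coefficients are unique; hence $\sum_{n\ge 0}|z|^{-n}v(n\lambda)<\infty$ for every $|z|>1$. Letting $|z|\downarrow 1$ yields $\limsup_n v(n\lambda)^{1/n}\le 1$, and with $v\ge 1$ this is $\lim_n v(n\lambda)^{1/n}=1$ for all $\lambda\in\Lambda$, i.e. $v$ is a GRS-weight. (This also shows that any strictly faster growth breaks spectral invariance.)

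For the converse, assume $v$ is a GRS-weight. I would invoke Hulanicki's lemma: a densely and continuously embedded Banach $*$-subalgebra $\mathcal{A}$ of a $C^*$-algebra $\mathcal{B}$ with continuous involution --- the latter supplied by Lemma~\ref{ContInvol} --- is spectrally invariant in $\mathcal{B}$ as soon as $\rho_{\mathcal{A}}(a)=\|a\|_{\mathcal{B}}$ for every self-adjoint $a\in\mathcal{A}$. Equivalently, it suffices to show that $\mathcal{A}^1_v(\Lambda,c)$ is a symmetric Banach $*$-algebra: a symmetric Banach $*$-algebra is inverse-closed in its enveloping $C^*$-algebra (which here is $C^*(\Lambda,c)$), via $A^{-1}=(A^{*}A)^{-1}A^{*}$ once the positive element $A^{*}A$ is seen to be invertible already in $\mathcal{A}^1_v(\Lambda,c)$. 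Taking $\mathcal{A}=\mathcal{A}^1_v(\Lambda,c)$, $\mathcal{B}=C^*(\Lambda,c)$, and using Gelfand's spectral-radius formula $\rho_{\mathcal{A}}(a)=\lim_n\|a^{n}\|_{\mathcal{A}^1_v(\Lambda,c)}^{1/n}$, the theorem reduces to
\[
\limsup_{n\to\infty}\|a^{n}\|_{\mathcal{A}^1_v(\Lambda,c)}^{1/n}\ \le\ \|a\|_{\mathrm{op}}\qquad(a=a^{*}\in\mathcal{A}^1_v(\Lambda,c)),
\]
the opposite inequality being automatic.

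To prove this estimate, write $\mathbf{a}$ for the coefficient sequence of a self-adjoint $a$. Since $\mathbb{Z}^{2d}$ is amenable, the left regular representation $\pi_{\mathrm{reg}}$ of $C^*(\Lambda,c)$ on $\ell^{2}(\Lambda)$ is faithful, so $\|\mathbf{a}^{\natural n}\|_{\ell^2}=\|\pi_{\mathrm{reg}}(a)^{n}\delta_0\|_{\ell^2}\le\|a\|_{\mathrm{op}}^{n}$. If $a$ is compactly supported, then $\mathbf{a}^{\natural n}$ is supported in a ball of radius $O(n)$, whose cardinality is polynomial in $n$, so Cauchy--Schwarz upgrades the previous bound to $\|\mathbf{a}^{\natural n}\|_{\ell^1}\le C\,n^{d}\|a\|_{\mathrm{op}}^{n}$ --- this is the unweighted ``Wiener's lemma for twisted convolution''. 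For a general GRS-weight one uses that it grows subexponentially, $v(\mu)\le C_\varepsilon e^{\varepsilon|\mu|}$ for every $\varepsilon>0$, which follows elementarily from $\lim_n v(ne_i)^{1/n}=1$ along a $\mathbb{Z}$-basis $\{e_i\}$ of $\Lambda$ together with submultiplicativity. On the (bounded, for $a$ compactly supported) support of $\mathbf{a}^{\natural n}$ the weight is then at most $C_\varepsilon e^{\varepsilon O(n)}$, so $\|a^{n}\|_{\mathcal{A}^1_v}\le C_\varepsilon e^{\varepsilon O(n)}\|\mathbf{a}^{\natural n}\|_{\ell^1}\le C_\varepsilon C\,n^{d}e^{\varepsilon O(n)}\|a\|_{\mathrm{op}}^{n}$; taking $n$-th roots, then $n\to\infty$, then $\varepsilon\to 0$ collapses the spurious factor to $1$ and returns $\|a\|_{\mathrm{op}}$.

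The step I expect to be the main obstacle is the passage from compactly supported generators --- where the expanding-support argument just given is clean --- to arbitrary elements of $\mathcal{A}^1_v(\Lambda,c)$: the powers $a^{n}$ then have uncontrolled support, and the spectral radius is not norm-continuous, so a naive truncation does not transfer the estimate. I would route around this through the symmetry formalism: establish first that the unweighted $\ell^1(\Lambda,c)$ is symmetric (classical, via realization through group algebras of groups of polynomial growth), and then deduce symmetry of $\ell^1_v(\Lambda,c)$ for GRS $v$ by an off-diagonal-decay (localization) argument, showing that the resolvent of a self-adjoint element --- already known to lie in $\ell^1(\Lambda,c)$ --- inherits $v$-weighted summability. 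It is precisely here that the subexponential growth of a GRS-weight is used in full, and this is the technical heart of the theorem (compare the treatment of weighted group algebras of groups of polynomial growth by Fendler, Gr\"ochenig and Leinert).
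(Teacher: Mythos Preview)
The paper does not prove this theorem: immediately after the statement it writes ``For the proof we refer the reader to \cite{gr07}.'' There is therefore no in-paper argument to compare against; the result is imported wholesale from Gr\"ochenig's survey (and ultimately from \cite{grle04}).

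Your proposal, by contrast, sketches an actual argument, and it is essentially the one found in the cited sources. The forward implication is correct and standard. For the converse you outline the Hulanicki/symmetry route, reducing spectral invariance to a spectral-radius estimate, proving it first for compactly supported elements via the polynomial growth of supports in $\mathbb{Z}^{2d}$ combined with the subexponential growth of a GRS-weight, and then --- correctly --- flagging the passage to general elements as the genuine technical obstacle, to be handled by first establishing symmetry of the unweighted $\ell^1(\Lambda,c)$ (polynomial growth of $\mathbb{Z}^{2d}$) and then lifting to $\ell^1_v(\Lambda,c)$ by a weighted decay argument. This is precisely the architecture of the Gr\"ochenig--Leinert proof and its weighted refinements, so your outline is faithful to the literature the paper invokes. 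One small remark: the subexponential bound $v(\mu)\le C_\varepsilon e^{\varepsilon|\mu|}$ does follow from the GRS condition along a basis together with submultiplicativity and the symmetry $v(-\lambda)=v(\lambda)$, but writing out that step explicitly (and absorbing the finitely many small-$|n_i|$ cases into $C_\varepsilon$) would tighten the exposition.
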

For the proof we refer the reader to \cite{gr07}. As a consequence we get the spectral invariance of $\mathcal{A}^\infty(\Lambda,c)$ in $C^*(\Lambda,c)$.
\begin{corollary}\label{GRS-smoothNCT}
Let $\Lambda$ be a lattice in $\mathbb{R}^{2d}$. Then the smooth noncommutative torus $\mathcal{A}^\infty_v(\Lambda,c)$ is spectrally invariant in $C^*(\Lambda,c)$ if and only if $v$ is a GRS-weight.
\end{corollary}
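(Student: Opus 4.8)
The plan is to deduce Corollary~\ref{GRS-smoothNCT} from Theorem~\ref{grle}, the crucial point being that the GRS condition does not see powers of the weight. First I would record that for every $\lambda\in\Lambda$ and every $s\ge 0$,
\[
\lim_{n\to\infty}v^{s}(n\lambda)^{1/n}=\Big(\lim_{n\to\infty}v(n\lambda)^{1/n}\Big)^{s},
\]
the limit on the right existing because $n\mapsto\log v(n\lambda)$ is subadditive by submultiplicativity of $v$. Hence $v$ is a GRS-weight if and only if $v^{s}$ is a GRS-weight for every $s\ge 0$: the implication from right to left is the trivial specialisation $s=1$, and the one from left to right is the displayed identity.

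Granting this, the implication ``$v$ GRS $\Rightarrow$ $\mathcal{A}^{\infty}_v(\Lambda,c)$ spectrally invariant'' is purely formal. Assume $v$ is a GRS-weight; then each $v^{s}$ is a GRS-weight, so by Theorem~\ref{grle} each noncommutative Wiener algebra $\mathcal{A}^1_{v^{s}}(\Lambda,c)$ is inverse-closed in $C^{*}(\Lambda,c)$, and all of these algebras share the unit $\pi(0)$. If $A\in\mathcal{A}^{\infty}_v(\Lambda,c)=\bigcap_{s\ge 0}\mathcal{A}^1_{v^{s}}(\Lambda,c)$ and $A^{-1}\in C^{*}(\Lambda,c)$, then for each $s\ge 0$ we have $A\in\mathcal{A}^1_{v^{s}}(\Lambda,c)$, hence $A^{-1}\in\mathcal{A}^1_{v^{s}}(\Lambda,c)$ by inverse-closedness, whence $A^{-1}\in\bigcap_{s\ge 0}\mathcal{A}^1_{v^{s}}(\Lambda,c)=\mathcal{A}^{\infty}_v(\Lambda,c)$. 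In other words: any intersection of inverse-closed unital subalgebras of a fixed unital $C^{*}$-algebra is inverse-closed.

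For the converse I would argue the contrapositive and exhibit an explicit obstruction. Suppose $v$ is not a GRS-weight, and fix $\lambda_{0}\in\Lambda$ with $L:=\lim_{n}v(n\lambda_{0})^{1/n}>1$. Pick $\rho\in(1,L)$, so that $v(n\lambda_{0})\ge\rho^{n}$ for all $n$ beyond some $N$, and pick scalars $\alpha,\beta$ with $|\alpha|>|\beta|\ge|\alpha|/\rho$ (possible since $\rho>1$). The time-frequency shift $\pi(\lambda_{0})$ is a unitary operator, hence a unitary element of $C^{*}(\Lambda,c)$ with spectrum in $\mathbb{T}$, so $A:=\alpha\,\pi(0)+\beta\,\pi(\lambda_{0})$ is invertible in $C^{*}(\Lambda,c)$ with norm-convergent Neumann series $A^{-1}=\alpha^{-1}\sum_{n\ge 0}(-\beta/\alpha)^{n}\pi(\lambda_{0})^{n}$; this series also converges in $C^{*}(\Lambda,c)$, being a limit of finite linear combinations of the $\pi(n\lambda_{0})$. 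The composition law for time-frequency shifts turns $\pi(\lambda_{0})^{n}$ into a unimodular multiple of $\pi(n\lambda_{0})$, so writing $A^{-1}=\sum_{\lambda}b(\lambda)\pi(\lambda)$ one gets $|b(n\lambda_{0})|=|\alpha|^{-1}|\beta/\alpha|^{n}$ for $n\ge 0$ and $b(\lambda)=0$ otherwise; since this coefficient sequence already lies in $\ell^1(\Lambda)$ and $\pi_\Lambda$ is faithful on $\ell^1(\Lambda)$, it is the only possible expansion. Then
\[
\|A^{-1}\|_{\mathcal{A}^1_v}=\sum_{n\ge 0}|\alpha|^{-1}|\beta/\alpha|^{n}v(n\lambda_{0})\ \ge\ \sum_{n\ge N}|\alpha|^{-1}\big(|\beta/\alpha|\,\rho\big)^{n}=\infty,
\]
because $|\beta/\alpha|\,\rho\ge 1$. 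Thus $A^{-1}\notin\mathcal{A}^1_v(\Lambda,c)$, hence a fortiori $A^{-1}\notin\mathcal{A}^{\infty}_v(\Lambda,c)$; on the other hand $A$ is finitely supported, so $A\in\mathcal{A}^1_{v^{s}}(\Lambda,c)$ for every $s\ge 0$ and therefore $A\in\mathcal{A}^{\infty}_v(\Lambda,c)$. Consequently $\mathcal{A}^{\infty}_v(\Lambda,c)$ is not spectrally invariant in $C^{*}(\Lambda,c)$, which is the contrapositive we wanted.

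The forward direction is essentially automatic once Theorem~\ref{grle} is in hand, so I expect the converse to be the only part demanding care; within it the two delicate points are extracting $\lambda_{0}$ with $v(n\lambda_{0})^{1/n}\to L>1$ from the failure of the GRS condition, and keeping track of the cocycle factors relating $\pi(\lambda_{0})^{n}$ to $\pi(n\lambda_{0})$. The latter is ultimately harmless: since $\Omega(\lambda_{0},\lambda_{0})=0$, those factors have modulus one and do not affect the $\ell^1_v$-norm estimate above.
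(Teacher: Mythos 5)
Your proof is correct, and it is in fact considerably more complete than the one in the paper. For the ``if'' direction you follow essentially the same route the paper sketches -- apply Theorem~\ref{grle} to each weight $v^{s}$ and observe that an intersection of inverse-closed unital subalgebras with common unit is inverse-closed -- but you make explicit the two points the paper's two-line argument leaves silent: that $v^{s}$ is again a submultiplicative GRS-weight whenever $v$ is (via Fekete's subadditivity lemma applied to $n\mapsto\log v(n\lambda)$, which also justifies that the limit in the GRS condition exists and is $\ge 1$), and that the intersection step is what actually transfers inverse-closedness to $\mathcal{A}^{\infty}_v(\Lambda,c)$; the paper's chain of inclusions $\mathcal{A}^1_{v^s}\subset\mathcal{A}^1_{v^{s-1}}\subset\cdots$ does not by itself yield the conclusion. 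More significantly, the paper never proves the ``only if'' direction at all, whereas you supply it with the standard but fully worked-out obstruction: a two-term element $A=\alpha\pi(0)+\beta\pi(\lambda_0)$ whose Neumann-series inverse has coefficients decaying like $|\beta/\alpha|^{n}$ along $n\lambda_0$, too slowly to be $\ell^1_v$-summable once $v(n\lambda_0)$ grows like $\rho^{n}$ with $|\beta/\alpha|\rho\ge 1$. Your appeal to the faithfulness of $\pi_\Lambda$ on $\ell^1(\Lambda)$ to pin down the coefficient sequence, the observation that $\mathcal{A}^{\infty}_v\subseteq\mathcal{A}^1_{v}$ (the case $s=1$ of the intersection) so that $A^{-1}\notin\mathcal{A}^1_v$ suffices, and the remark that the cocycle factors relating $\pi(\lambda_0)^n$ to $\pi(n\lambda_0)$ are unimodular and hence harmless, are all exactly the right bookkeeping. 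The only caveat worth recording is that the element $\pi(\lambda_0)$ must be regarded as a unitary in the enveloping $C^*$-algebra itself (which it is, since $\delta_{\lambda_0}^*\natural\delta_{\lambda_0}=\delta_0$), so the Neumann series converges in $C^*(\Lambda,c)$ and not merely in $\mathcal{B}(L^2(\mathbb{R}^d))$; you assert this correctly, though the justification could be stated more sharply.
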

\begin{proof}
Note that we have $\mathcal{A}^1_{v^s}(\Lambda,c)\subset \mathcal{A}^1_{v^{s-1}}(\Lambda,c)\subset\mathcal{A}^1(\Lambda,c)\subset C^*(\Lambda,c)$. Therefore $\mathcal{A}^1_{v^s}(\Lambda,c)$ is spectrally invariant in $C^*(\Lambda,c)$ for all $s$. Consequently,  $\mathcal{A}^\infty_v(\Lambda,c)$  is spectrally invariant in $C^*(\Lambda,c)$. 
\end{proof}
Remark: A submultiplicative weight grows at most exponentially and a GRS-weight has a at most {\it sub-exponential} growth. For an extensive discussion of weights we refer the reader to Chapter $11$ in \cite{gr01} and to \cite{gr07}. 
\par
By the above remark the spectral invariance of $\mathcal{A}^1_v(\Lambda,c)$ and $\mathcal{A}^\infty_v(\Lambda,c)$ in $C^*(\Lambda,c)$ forces $v$ to be sub-exponential. Therefore in the case that $v$ grows faster than a polynomial, the smooth noncommutative torus $\mathcal{A}^\infty_v(\Lambda,c)$ is a subspace of $\mathcal{A}^\infty(\Lambda,c)$. 
\par
An important fact about Gabor frames is that Gabor frames $\mathcal{G}(g,\Lambda)$ with $g$ in the Schwartz space $\mathscr{S}(\mathbb{R}^d)$ provide a discrete description of the Schwartz space $\mathscr{S}(\mathbb{R}^d)$ in terms of its Gabor coefficients. Namely, $f\in \mathscr{S}(\mathbb{R}^d)$ if and only if $(\langle f,\pi(\lambda)g\rangle)\in \mathscr{S}(\Lambda)$. The key to such statements is that the Janssen representation of the Gabor frame operator is in $\mathcal{A}^\infty(\Lambda,c)$. In an analogous manner  the classes $\mathcal{A}^\infty_v(\Lambda,c)$ for $v$ that grows faster than a polynomial provide a description of subspaces $\mathscr{S}_v(\mathbb{R}^d)$ of $\mathscr{S}(\mathbb{R}^d)$ in terms of Gabor frames, see Section 3 for a further discussion of this aspect. The classes $\mathscr{S}_v(\mathbb{R}^d)$ for $v$ that grows faster than a polynomial are natural spaces of test functions for ultra-distributions \cite{cogrro06}. 
\par 
The last theorem has various applications in Gabor analysis, see \cite{grle04,feka04}, most notably that the Gabor frame operator has the same spectrum on all modulation spaces for good Gabor atoms and that the canonical dual and tight Gabor window for good Gabor systems have the same quality as the Gabor atom. These results are based on two observations about spectrally invariant Banach algebras and m-convex algebras $\mathcal{A}$ in $C^*(
\Lambda,c)$: (1) The spectrum $\sigma_{\mathcal{A}}(A)=\sigma_{C^*(\Lambda,c)}(A)$ for $A\in\mathcal{A}$, where  $\sigma_{\mathcal{A}}(A)=\{z\in\mathbb{C}:(z-A)^{-1}~~\text{does not exist in}~~\mathcal{A}\}$ is the spectrum of $A\in\mathcal{A}$. (2) If $\mathcal{A}$ is spectrally invariant in $C^*(\Lambda,c)$, then $\mathcal{A}$ is stable under holomorphic function calculus of $C^*(\Lambda,c)$.
\par
Now, we want to explore the consequences of Gr\"ochenig-Leinert's Theorem \ref{grle} for an understanding of the deeper properties of $\mathcal{A}^1_v(\Lambda,c)$ and $\mathcal{A}^\infty_v(\Lambda,c)$, e.g. their topological stable rank. These results will allow us to draw some important conclusions about the deeper structure of Gabor frames in Section 4.
\par
In \cite{ri83-1} the topological stable rank of a Banach algebra was introduced as a noncommutative analogue of the notion of covering dimension of a 
compact space. In the remaining part of this section we derive some upper bounds for the topological stable rank of the noncommutative Wiener algebras and smooth noncommutative tori.
\par
The {\it left(right) topological stable rank} of a unital topological algebra $\mathcal{A}$, denoted by $\text{ltsr}(\mathcal{A})$  $\big(\text{rtsr}(\mathcal{A})\big)$, is the smallest number $n$ such that the set of $n$-tuples of elements of $A$ which generate $\mathcal{A}$ as a {\it left(right) ideal} is dense in $\mathcal{A}^n$. We denote the set of $n$-tuples of elements of $\mathcal{A}$ which generate $\mathcal{A}^n$ as a left(right) ideal by $\text{Lg}_n(\mathcal{A})$ $\big(\text{Rg}_n(\mathcal{A})\big)$. If ${\text{ltsr}(\mathcal{A})=\text{rtsr}(\mathcal{A})}$, then we call it the {\it topological stable rank} of $\mathcal{A}$, and we denote it by $\text{tsr}(\mathcal{A})$.

\begin{proposition}\label{prop:TopStableRankNonCommWiener}
Let $\Lambda$ be a lattice in $\mathbb{R}^{2d}$ and let $v$ be a GRS-weight. Then 
\begin{equation*}
\mathrm{tsr}(\mathcal{A}^\infty_v\big(\Lambda,c)\big)=\mathrm{tsr}(\mathcal{A}^1_v\big(\Lambda,c)\big)=\mathrm{tsr}\big(C^*(\Lambda,c)\big). 
\end{equation*}
Furthermore, we have that $\mathrm{tsr}(\mathcal{A}^\infty_v\big(\Lambda,c)\big)=\text{tsr}(\mathcal{A}^1_v\big(\Lambda,c)\big)\le 2d+1$.
\end{proposition}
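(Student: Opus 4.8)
The plan is to reduce the statement to two known facts: (i) Rieffel's theorem that $\mathrm{tsr}(C^*(\Lambda,c))\le 2d+1$ for noncommutative tori (indeed $\mathrm{tsr}$ of the rotation algebra $A_\theta$ and its higher-dimensional analogues is bounded by the number of generators, which for a lattice in $\mathbb{R}^{2d}$ gives $2d+1$), and (ii) Badea's theorem that a spectrally invariant dense subalgebra of a Banach algebra has the same topological stable rank. First I would invoke Theorem~\ref{grle} (Gr\"ochenig--Leinert): since $v$ is a GRS-weight, $\mathcal{A}^1_v(\Lambda,c)$ is spectrally invariant in $C^*(\Lambda,c)$, and both are unital with common unit. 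Badea's result then gives $\mathrm{tsr}(\mathcal{A}^1_v(\Lambda,c))=\mathrm{tsr}(C^*(\Lambda,c))$. Combining with Rieffel's bound yields $\mathrm{tsr}(\mathcal{A}^1_v(\Lambda,c))=\mathrm{tsr}(C^*(\Lambda,c))\le 2d+1$.

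For the smooth part, the point is that $\mathcal{A}^\infty_v(\Lambda,c)=\bigcap_{s\ge 0}\mathcal{A}^1_{v^s}(\Lambda,c)$ is an m-algebra, i.e. a projective limit of the Banach algebras $\mathcal{A}^1_{v^s}(\Lambda,c)$, each of which (by Corollary~\ref{GRS-smoothNCT}, or directly by Theorem~\ref{grle} applied to the GRS-weight $v^s$) is spectrally invariant in $C^*(\Lambda,c)$. I would argue that $\mathcal{A}^\infty_v(\Lambda,c)$ is itself spectrally invariant in $C^*(\Lambda,c)$: if $A\in\mathcal{A}^\infty_v(\Lambda,c)$ is invertible in $C^*(\Lambda,c)$, then spectral invariance of each $\mathcal{A}^1_{v^s}(\Lambda,c)$ forces $A^{-1}\in\mathcal{A}^1_{v^s}(\Lambda,c)$ for every $s\ge 0$, hence $A^{-1}\in\mathcal{A}^\infty_v(\Lambda,c)$. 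One then needs the analogue of Badea's theorem for spectrally invariant (dense) subalgebras that are Fr\'echet/m-convex rather than Banach — this is exactly the kind of statement covered in Badea's work and in the references on stable rank of spectrally invariant algebras cited in the paper. Applying it gives $\mathrm{tsr}(\mathcal{A}^\infty_v(\Lambda,c))=\mathrm{tsr}(C^*(\Lambda,c))$, completing the chain of equalities and the bound.

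The main obstacle I expect is the m-convex (non-Banach) case in the last paragraph: Badea's original statement is for Banach algebras, so one must either cite a version valid for dense spectrally invariant subalgebras of locally convex m-algebras, or give a direct density argument. The direct route would be: given an $n$-tuple in $\mathcal{A}^\infty_v(\Lambda,c)^n$ and a neighborhood defined by finitely many seminorms $\|\cdot\|_{\mathcal{A}^1_{v^{s_1}}},\dots,\|\cdot\|_{\mathcal{A}^1_{v^{s_k}}}$, use that $\mathcal{A}^1_{v^S}(\Lambda,c)$ with $S=\max s_j$ has tsr $\le 2d+1$ and that $\mathcal{A}^\infty_v(\Lambda,c)$ is dense in $\mathcal{A}^1_{v^S}(\Lambda,c)$ in the $v^S$-norm, so an approximating left-invertible (in the sense of generating as a left ideal) tuple in $\mathcal{A}^1_{v^S}$ can be perturbed to lie in $\mathcal{A}^\infty_v$; spectral invariance is what guarantees that "generates $C^*(\Lambda,c)$ as a left ideal" is equivalent to the corresponding property in each subalgebra, so the notion of $\mathrm{Lg}_n$ is stable along the tower. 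Checking that this perturbation can be done simultaneously for all the seminorms in play, and that the relevant $\mathrm{Lg}_n$ sets are open (which uses that these are unital Banach/m-algebras with continuous inversion on invertibles — here spectral invariance is again essential), is the technical heart; everything else is a formal assembly of Theorem~\ref{grle}, Corollary~\ref{GRS-smoothNCT}, Rieffel's bound, and Badea's theorem.
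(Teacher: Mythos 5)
Your proposal follows essentially the same route as the paper: Gr\"ochenig--Leinert gives spectral invariance of $\mathcal{A}^1_v(\Lambda,c)$ in $C^*(\Lambda,c)$, Badea's theorem transfers the topological stable rank across a spectrally invariant inclusion, and Rieffel's bound for $C^*(\Lambda,c)$ supplies the estimate $\le 2d+1$. You are in fact more explicit than the paper about the m-convex case for $\mathcal{A}^\infty_v(\Lambda,c)$ --- the paper simply invokes Badea's result together with Rieffel's equality $\mathrm{ltsr}=\mathrm{rtsr}$ for involutive m-convex algebras without further comment, whereas your sketched density argument over the tower $\mathcal{A}^1_{v^s}(\Lambda,c)$ is a reasonable way to substantiate that step.
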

\begin{proof}
Recall that our assumptions on $v$, i.e. $v(-\lambda)=v(\lambda)$ for all $\lambda\in\Lambda$, implies that  
 $\mathcal{A}^1_v(\Lambda,c)$ has a continuous involution. 
 If $\mathcal{A}$ is a unital Banach algebra or m-convex algebra with a continuous involution, then Rieffel proved in \cite{ri83-1} that $\mathrm{ltsr}(\mathcal{A})=\mathrm{rtsr}(\mathcal{A})$. Now, we invoke a result of Badea that $\mathrm{tsr}(\mathcal{A})=\mathrm{tsr}(\mathcal{B})$ if $\mathcal{A}$ is spectrally invariant in $\mathcal{B}$, \cite{ba98-4}. Since $\mathcal{A}^1_v(\Lambda)$ is spectrally invariant in $C^*(\Lambda,c)$ if $v$ satisfies the GRS-condition, \cite{gr07}. Finally, the upper bound for the topological stable rank of $C^*(\Lambda,c)$ is due to Rieffel, see \cite{ri83-1,ri88}. This completes the proof.
\end{proof}
\par
A well-known fact about topological stable rank is that for a topological algebra $\mathcal{A}$ with topological stable rank one the invertible elements are dense in $\mathcal{A}$, e.g. \cite{ri83-1}. By the preceding theorem $\mathrm{tsr}(C^*\big(\Lambda,c)\big)=1$ implies $\mathrm{tsr}(\mathcal{A}^\infty_v\big(\Lambda,c)\big)=\mathrm{tsr}(\mathcal{A}^1_v\big(\Lambda,c)\big)$. It is quite a challenge to determine the topological stable rank of a specific $C^*$-algebra. In the case of noncommutative tori Putnam has shown that the irrational noncommutative 2-torus has topological stable rank one. Later Blackadar, Kumjian and Roerdam extended this result to simple noncommutative tori in \cite{blkuro92}. In our setting this amounts to the following: $C^*(\Lambda,c)$ is simple if and only if $\Lambda$ is {\it completely irrational} in the sense that for every $\lambda\in\Lambda$ there is a $\mu\in\Lambda$ such that $\Omega(\lambda,\mu)$ is an irrational number.
\begin{theorem}
Let $\Lambda$ be completely irrational and $v$ a GRS-weight. Then $\\\mathrm{tsr}(\mathcal{A}^\infty_v\big(\Lambda,c)\big)=\mathrm{tsr}\big(\mathcal{A}^1_v(\Lambda)\big)=1$.
\end{theorem}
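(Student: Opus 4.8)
The plan is to reduce this statement to Proposition~\ref{prop:TopStableRankNonCommWiener} together with the theorem of Blackadar, Kumjian and Roerdam quoted in the preceding paragraph. The chain of reasoning has three links, and the only real work is checking that the hypotheses of each result are in place; once that is done the conclusion is immediate.

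First I would recall the dichotomy for simplicity of noncommutative tori stated just before the theorem: $C^*(\Lambda,c)$ is simple precisely when $\Lambda$ is completely irrational, i.e.\ for every $\lambda\in\Lambda$ there is $\mu\in\Lambda$ with $\Omega(\lambda,\mu)\notin\mathbb{Q}$. So the hypothesis ``$\Lambda$ completely irrational'' is exactly the hypothesis ``$C^*(\Lambda,c)$ simple''. Next, by the result of Blackadar, Kumjian and R\o rdam \cite{blkuro92}, every simple noncommutative torus has topological stable rank one; hence $\mathrm{tsr}\big(C^*(\Lambda,c)\big)=1$. (One should note in passing that $C^*(\Lambda,c)$ is unital — this is needed for $\mathrm{tsr}$ to be defined in the sense used here — which holds since $\Lambda$ is a lattice and the point mass at $0$ gives the unit of $\ell^1_v(\Lambda,c)$, hence of its enveloping $C^*$-algebra.)

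Finally I would invoke Proposition~\ref{prop:TopStableRankNonCommWiener}: since $v$ is a GRS-weight, that proposition gives
\begin{equation*}
\mathrm{tsr}\big(\mathcal{A}^\infty_v(\Lambda,c)\big)=\mathrm{tsr}\big(\mathcal{A}^1_v(\Lambda,c)\big)=\mathrm{tsr}\big(C^*(\Lambda,c)\big).
\end{equation*}
Combining this equality with $\mathrm{tsr}\big(C^*(\Lambda,c)\big)=1$ yields
\begin{equation*}
\mathrm{tsr}\big(\mathcal{A}^\infty_v(\Lambda,c)\big)=\mathrm{tsr}\big(\mathcal{A}^1_v(\Lambda,c)\big)=1,
\end{equation*}
which is the assertion. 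Implicit here is the remark made in the text that $\mathrm{tsr}\big(C^*(\Lambda,c)\big)=1$ forces the equality of left and right stable ranks for the subalgebras (they are already equal by the continuous-involution argument in Proposition~\ref{prop:TopStableRankNonCommWiener}), so writing $\mathrm{tsr}$ without the ``left/right'' qualifier is legitimate throughout.

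There is essentially no obstacle internal to this paper: the whole content has been externalized into the cited theorem of Blackadar--Kumjian--R\o rdam and into Proposition~\ref{prop:TopStableRankNonCommWiener}, whose proof in turn rests on Rieffel's theorem that a continuous involution makes left and right stable ranks agree and on Badea's theorem that spectral invariance preserves topological stable rank. The one point that deserves a sentence of care — and which I would make explicit — is the passage from ``completely irrational'' to ``$C^*(\Lambda,c)$ simple'', since the $\mathrm{tsr}=1$ input from \cite{blkuro92} is phrased in terms of simplicity; but this equivalence is precisely the characterization recalled in the paragraph preceding the theorem, so it can simply be cited.
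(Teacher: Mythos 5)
Your proposal is correct and follows exactly the paper's own argument: Gr\"ochenig--Leinert's spectral invariance for GRS-weights plus Badea's theorem gives $\mathrm{tsr}(\mathcal{A}^\infty_v(\Lambda,c))=\mathrm{tsr}(\mathcal{A}^1_v(\Lambda,c))=\mathrm{tsr}(C^*(\Lambda,c))$, and the Blackadar--Kumjian--R\o rdam theorem for simple (i.e.\ completely irrational) noncommutative tori gives $\mathrm{tsr}(C^*(\Lambda,c))=1$. The extra remarks on unitality and on the agreement of left and right stable ranks are sensible bookkeeping but do not change the route.
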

\begin{proof}
 The GRS-condition is equivalent to the spectral invariance of $\mathcal{A}^1_v(\Lambda,c)$ in $C^*(\Lambda,c)$ and therefore by Badea's result \cite{ba98-4} to  $\mathrm{tsr}~(\mathcal{A}^\infty_v\big(\Lambda,c)\big)=\mathrm{tsr}~\big(\mathcal{A}^1_v(\Lambda,c)\big)=\mathrm{tsr}~\big(C^*(\Lambda,c)\big)$. By Theorem 1.5 in \cite{blkuro92} we know that $\mathrm{tsr}~C^*(\Lambda,c)=1$ for $\Lambda$ completely irrational. That completes the argument. 
\end{proof}
In Section 3 our main theorems deal with the construction of smooth projective modules over $\mathcal{A}^1_v(\Lambda,c)$ and $\mathcal{A}^\infty_v(\Lambda,c)$ in the sense of Connes, \cite{co81-2}. In other words, we have to construct projections in the algebras $\mathcal{M}_n\big(\mathcal{A}^1_v(\Lambda,c)\big)$ and $\mathcal{M}_n\big(\mathcal{A}^\infty_v(\Lambda,c)\big)$ of $n\times n$ matrices with entries in $\mathcal{A}^1_v(\Lambda,c)$ or $\mathcal{A}^\infty_v(\Lambda,c)$, respectively.
\par
In \cite{le74} Leptin has proved that $\mathcal{M}_n(\mathcal{A})=\mathcal{M}_n\otimes\mathcal{A}$ is spectrally invariant in $\mathcal{M}_n(\mathcal{B})$ if $\mathcal{A}$ is a spectrally invariant Banach subalgebra of $\mathcal{B}$. Note that it is elementary to extend Leptin's result to m-convex algebras. Connes obtained this result for Frechet algebras independently, see \cite{co81-2}, see \cite{sc92} for a discussion of this theorem for m-convex algebras. These observations and the characterization of the spectral invariance of $\mathcal{A}^1_v(\Lambda,c)$ and $\mathcal{A}^\infty_v(\Lambda,c)$ in $C^*(\Lambda,c)$, see Theorem \ref{grle} and our Proposition \ref{prop:TopStableRankNonCommWiener}, yield the following result.  
\begin{theorem}
  Let $\Lambda$ be a lattice in $\mathbb{R}^{2d}$. Then $\mathcal{M}_n\big(\mathcal{A}^1_v(\Lambda,c)\big)$ and $\mathcal{M}_n\big(\mathcal{A}^\infty_v(\Lambda,c)\big)$ are spectrally invariant in $\mathcal{M}_n\big(C^*(\Lambda,c)\big)$ if and only if $v$ is a GRS-weight. 
\end{theorem}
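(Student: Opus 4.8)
The plan is to reduce the biconditional to the scalar statements already at hand --- Theorem~\ref{grle} and Corollary~\ref{GRS-smoothNCT} --- by moving back and forth between an algebra and its $n\times n$ matrix amplification. Throughout one uses that $C^*(\Lambda,c)$, $\mathcal{A}^1_v(\Lambda,c)$ and $\mathcal{A}^\infty_v(\Lambda,c)$ all contain the unit $\pi(0)=\mathrm{Id}_{L^2(\mathbb{R}^d)}$, so that all matrix algebras below are unital with common unit $\mathrm{diag}(\mathrm{Id},\dots,\mathrm{Id})$; moreover $\mathcal{M}_n(\mathcal{A}^\infty_v(\Lambda,c))=\mathcal{M}_n\otimes\mathcal{A}^\infty_v(\Lambda,c)$ is again an m-convex algebra, namely the projective limit of the Banach algebras $\mathcal{M}_n(\mathcal{A}^1_{v^s}(\Lambda,c))$, $s\ge 0$.

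For the ``if'' direction I would argue as follows. Assume $v$ is a GRS-weight; then so is $v^s$ for every $s\ge 0$, because $\lim_n v^s(n\lambda)^{1/n}=(\lim_n v(n\lambda)^{1/n})^s=1$. Hence, by Theorem~\ref{grle}, each $\mathcal{A}^1_{v^s}(\Lambda,c)$ is spectrally invariant in $C^*(\Lambda,c)$, and Leptin's theorem \cite{le74} upgrades this to: $\mathcal{M}_n(\mathcal{A}^1_{v^s}(\Lambda,c))$ is spectrally invariant in $\mathcal{M}_n(C^*(\Lambda,c))$ for every $s\ge 0$. Taking $s=1$ settles the Wiener-algebra case. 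For the smooth torus I would then mimic the proof of Corollary~\ref{GRS-smoothNCT}: if $A\in\mathcal{M}_n(\mathcal{A}^\infty_v(\Lambda,c))=\bigcap_{s\ge 0}\mathcal{M}_n(\mathcal{A}^1_{v^s}(\Lambda,c))$ is invertible in $\mathcal{M}_n(C^*(\Lambda,c))$, then $A^{-1}$ lies in each $\mathcal{M}_n(\mathcal{A}^1_{v^s}(\Lambda,c))$ and hence in their intersection $\mathcal{M}_n(\mathcal{A}^\infty_v(\Lambda,c))$. Alternatively one may cite the m-convex version of Leptin's theorem due to Connes and Schweitzer \cite{co81-2,sc92} directly.

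For the ``only if'' direction I would use a corner embedding. Suppose $\mathcal{M}_n(\mathcal{A}^1_v(\Lambda,c))$ is spectrally invariant in $\mathcal{M}_n(C^*(\Lambda,c))$, and embed $\mathcal{A}^1_v(\Lambda,c)$ unitally via $A\mapsto\mathrm{diag}(A,\mathrm{Id},\dots,\mathrm{Id})$. If $A\in\mathcal{A}^1_v(\Lambda,c)$ is invertible in $C^*(\Lambda,c)$, then $\mathrm{diag}(A,\mathrm{Id},\dots,\mathrm{Id})$ is invertible in $\mathcal{M}_n(C^*(\Lambda,c))$ with inverse $\mathrm{diag}(A^{-1},\mathrm{Id},\dots,\mathrm{Id})$, which by hypothesis belongs to $\mathcal{M}_n(\mathcal{A}^1_v(\Lambda,c))$; reading off the corner gives $A^{-1}\in\mathcal{A}^1_v(\Lambda,c)$. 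Thus $\mathcal{A}^1_v(\Lambda,c)$ is spectrally invariant in $C^*(\Lambda,c)$, and Theorem~\ref{grle} forces $v$ to be a GRS-weight. The identical argument with $\mathcal{A}^\infty_v(\Lambda,c)$ in place of $\mathcal{A}^1_v(\Lambda,c)$, together with Corollary~\ref{GRS-smoothNCT}, yields the smooth case.

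The only point needing care --- and the one I expect to be the main, if modest, obstacle --- is the matrix-amplification step in the ``if'' direction for the m-convex algebra $\mathcal{A}^\infty_v(\Lambda,c)$: one must ensure that stability under the holomorphic functional calculus of $C^*(\Lambda,c)$ survives the passage to $n\times n$ matrices. This is handled either by the explicit intersection argument sketched above, which uses only the Banach-algebra form of Leptin's theorem, or by the m-convex refinements recorded in \cite{co81-2,sc92}.
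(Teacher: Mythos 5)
Your proposal is correct and follows essentially the same route as the paper: Leptin's matrix-amplification theorem (and its m-convex extension) combined with the Gr\"ochenig--Leinert characterization, with the intersection $\mathcal{M}_n\big(\bigcap_s\mathcal{A}^1_{v^s}(\Lambda,c)\big)=\bigcap_s\mathcal{M}_n\big(\mathcal{A}^1_{v^s}(\Lambda,c)\big)$ handling the smooth case. The one genuine addition is that you make the ``only if'' direction explicit via the corner embedding $A\mapsto\mathrm{diag}(A,\mathrm{Id},\dots,\mathrm{Id})$, a step the paper leaves implicit.
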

Following Connes we deduce from the previous theorem the density result for K-groups, see \cite{co94-1}.
\begin{corollary}
Let $\Lambda$ be a lattice in $\mathbb{R}^{2d}$ and $v$ a GRS-weight. Then the inclusion $i$ of $\mathcal{A}^1_v(\Lambda,c)$ and $\mathcal{A}^\infty_v(\Lambda,c)$ into $C^*(\Lambda,c)$, respectively, gives an isomorphism of $K_0$-groups $i_0:K_0\big(\mathcal{A}\big)\to K_0\big(C^*(\Lambda,c)\big)$ and of $K_1$-groups $i_1:K_0\big(\mathcal{A}\big)\to K_0\big(C^*(\Lambda,c)\big)$ for $\mathcal{A}=\mathcal{A}^1_v(\Lambda,c)$ and $\mathcal{A}=\mathcal{A}^\infty_v(\Lambda,c)$.
\end{corollary}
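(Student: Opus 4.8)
The plan is to obtain the statement as an instance of the \emph{density theorem} in $K$-theory: if a unital Fréchet algebra $\mathcal{A}$ sits densely in a $C^*$-algebra $\mathcal{B}$ and is stable under the holomorphic functional calculus at every matrix level, then the inclusion induces isomorphisms $i_*\colon K_*(\mathcal{A})\to K_*(\mathcal{B})$. Here $\mathcal{A}$ is either $\mathcal{A}^1_v(\Lambda,c)$ or $\mathcal{A}^\infty_v(\Lambda,c)$ and $\mathcal{B}=C^*(\Lambda,c)$. Both choices of $\mathcal{A}$ are unital and, by the construction of Section~2, dense in $C^*(\Lambda,c)$ (they contain the finitely supported twisted sequences, which are dense in $\ell^1(\Lambda,c)$ and hence in $C^*(\Lambda,c)$); the first is a Banach $*$-algebra by Lemma~\ref{ContInvol}, the second an m-convex Fréchet $*$-algebra. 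By the preceding theorem the GRS-hypothesis on $v$ is equivalent to $\mathcal{M}_n(\mathcal{A})$ being spectrally invariant in $\mathcal{M}_n(C^*(\Lambda,c))$ for all $n$, and, as recalled in Section~2, spectral invariance forces stability under the holomorphic functional calculus. Thus $\mathcal{A}$ meets exactly the hypotheses of Connes's density theorem \cite{co94-1,co81-2}, which yields the claim; the typo $i_1\colon K_0\to K_0$ in the statement should of course read $i_1\colon K_1(\mathcal{A})\to K_1(C^*(\Lambda,c))$.

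For completeness I would record the four standard steps underlying the density theorem, applied with $\mathcal{A}$ as above. \textbf{Surjectivity of $i_0$:} any projection $p\in\mathcal{M}_n(C^*(\Lambda,c))$ is approximated by a self-adjoint $a\in\mathcal{M}_n(\mathcal{A})$ whose spectrum then lies in two small intervals about $0$ and $1$; applying a holomorphic function $\chi$ equal to $0$ near $0$ and to $1$ near $1$ yields a projection $\chi(a)\in\mathcal{M}_n(\mathcal{A})$ with $\|\chi(a)-p\|<1$, hence Murray--von Neumann equivalent to $p$ in $C^*(\Lambda,c)$. \textbf{Injectivity of $i_0$:} a homotopy between projections over $\mathcal{A}$ taking place in $\mathcal{M}_\infty(C^*(\Lambda,c))$ is subdivided into short pieces, whose endpoints are pushed back into $\mathcal{M}_\infty(\mathcal{A})$ by the same functional-calculus trick, using that projections at distance $<1$ are unitarily equivalent by a unitary produced from the holomorphic functional calculus inside $\mathcal{M}_\infty(\mathcal{A})$; concatenation gives a homotopy within $\mathcal{M}_\infty(\mathcal{A})$. \textbf{Surjectivity of $i_1$:} an invertible $u\in\mathrm{GL}_n(C^*(\Lambda,c))$ is approximated by $b\in\mathcal{M}_n(\mathcal{A})$ so closely that $b\in\mathrm{GL}_n(C^*(\Lambda,c))$; spectral invariance of $\mathcal{M}_n(\mathcal{A})$ gives $b^{-1}\in\mathcal{M}_n(\mathcal{A})$, and the line segment from $u$ to $b$ shows $[b]=[u]$ in $K_1$. \textbf{Injectivity of $i_1$:} if $b\in\mathrm{GL}_n(\mathcal{A})$ is connected to the identity in $\mathrm{GL}_\infty(C^*(\Lambda,c))$, the connecting path is subdivided and each short segment is approximated, via spectral invariance, by a path of invertibles in $\mathcal{M}_\infty(\mathcal{A})$; concatenation connects $b$ to the identity inside $\mathrm{GL}_\infty(\mathcal{A})$.

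The main obstacle is the Fréchet case $\mathcal{A}=\mathcal{A}^\infty_v(\Lambda,c)$: since this is only an m-convex algebra, every occurrence of ``small in norm'' above must be read as ``small in each of the defining submultiplicative seminorms $\|\cdot\|_{\mathcal{A}^1_{v^s}}$'', and one must know the holomorphic functional calculus and the approximations can be carried out simultaneously in all of them. This is exactly what the m-convex version of the density theorem supplies, resting on the m-convex extension of Leptin's spectral-invariance theorem \cite{le74,sc92} already used in the preceding theorem. Alternatively, the Fréchet density theorem can be bypassed: by transitivity of spectral invariance at every matrix level, $\mathcal{A}^\infty_v(\Lambda,c)$ is spectrally invariant in the Banach algebra $\mathcal{A}^1_v(\Lambda,c)$ and dense in it, so it suffices to apply the density theorem to the pair $\big(\mathcal{A}^\infty_v(\Lambda,c),\mathcal{A}^1_v(\Lambda,c)\big)$ and to the pair $\big(\mathcal{A}^1_v(\Lambda,c),C^*(\Lambda,c)\big)$ and compose the resulting isomorphisms. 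Finally, the same argument goes through verbatim with $\mathcal{A}$ replaced by $\mathcal{M}_N(\mathcal{A})$, which is the form in which the statement is used for the classification of projective modules in Section~3.
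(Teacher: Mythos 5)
Your proposal is correct and follows essentially the same route as the paper, which deduces the corollary from the matrix-level spectral invariance theorem by citing Connes's density theorem for $K$-groups; your fleshed-out four-step sketch and the factorization through $\mathcal{A}^1_v(\Lambda,c)$ for the Fr\'echet case are just the standard details behind that citation (and you are right that $i_1$ should map $K_1$ to $K_1$).
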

If the weight $v$ grows at most polynomially, then the preceding Corollary corresponds to the well-known result for the smooth noncommutative torus $\mathcal{A}^\infty(\Lambda,c)$, \cite{co80}. One of the main goals of this section was to demonstrate that Connes's results on the spectral invariance of the smooth noncommutative torus $\mathcal{A}^\infty(\Lambda,c)$ in $C^*(\Lambda,c)$ is the special case $\mathcal{A}^\infty_{v}(\Lambda,c)$ for the radial weight $v(\lambda)=(1+|\lambda|^2)$ of a general class of subalgebras $\mathcal{A}^\infty_{v}(\Lambda^c)$ of $C^*(\Lambda,c)$ and that these algebras are intimately tied with recent developments in time-frequency analysis.  
\section{Projective modules over noncommutative tori and noncommutative Wiener algebras}

In \cite{lu07} we have shown that Feichtinger's algebra $S_0(\mathbb{R^d})$ provides a convenient class of functions for the construction of Hilbert $C^*(\Lambda,c)$-modules. In the present section we extend the results in \cite{lu07} to modulation spaces $M^1_v(\mathbb{R}^d)$ and to $\mathscr{S}_v(\mathbb{R}^d)=\bigcap_{s\ge 0}M^1_{v^s}(\mathbb{R}^d)$ for a non-trivial submultiplicative weight $v$. 
\par
At this point it will be convenient to formally introduce an important class of function spaces, invented by Feichtinger in \cite{fe83-4}. Modulation spaces have found many applications in harmonic analysis and time-frequency analysis, see the interesting survey article \cite{fe06} for an extensive bibliography. If $g$ is a window function in $L^2(\mathbb{R}^d)$, then the {\it short-time Fourier transform} (STFT) of a function or distribution $f$ is defined by 
\begin{equation}
  V_gf(x,\omega)=\langle f,\pi(x,\omega)g\rangle=\int_{\mathbb{R}^d}f(t)\overline{g}(t-x)e^{-2\pi ix\cdot\omega}dt.
\end{equation}
The STFT of $f$ with respect to the window $g$ measures the time-frequency content of a function $f$. Modulation spaces are classes of function spaces, where the norms are given in terms of integrability or decay conditions of the STFT. In the present section we restrict our interest to Feichtinger's algebra $M^1(\mathbb{R}^d)$ and its weighted versions $M^1_v(\mathbb{R}^d)$ for a submultiplicative weight $v$. We introduce the full class of modulation spaces in Section 4, where we interpret the projective modules over $C^*(\Lambda,c)$ of the present section as multi-window Gabor frames over modulation spaces.
\par
In time-frequency analysis the modulation space $M^1_v(\mathbb{R}^d)$ has turned out to be a good class of windows. If $\varphi(t)=e^{-\pi t^2}$ is the Gaussian, then the modulation space $M^1_v(\mathbb{R}^d)$ is the space 
\begin{equation*}
  M^1_v(\mathbb{R}^d)=\{f\in L^2(\mathbb{R}^d): \|f\|_{M^1_v}:=\int_{\mathbb{R}^d}|V_\varphi f(x,\omega)|v(x,\omega)dxd\omega<\infty\}.
\end{equation*}
The space $M^1(\mathbb{R}^d)$ is the well-known Feichtinger algebra $S_0(\mathbb{R}^d)$, which he introduced in \cite{fe81-3} as the minimal strongly character invariant Segal algebra.
\par 
Let $v$ be a submultiplicative weight on $\mathbb{R}^{2d}$ such that $v$ is not constant on $\mathbb{R}^d$ and $\widehat{\mathbb{R}}^d$. Then a natural generalization of Schwartz's class of test functions is given by
\begin{equation*}
  \mathscr{S}_v(\mathbb{R}^d):=\bigcap_{s\ge 0}M^1_{v^s}(\mathbb{R}^d)
\end{equation*}
with seminorms $\|f\|_{M^1_{v^s}}$ for $s\ge 0$. If $v$ is of at most polynomially growth, then $ \mathscr{S}_v(\mathbb{R}^d)$ is the Schwartz class of test functions $\mathscr{S}(\mathbb{R}^d)$, \cite{gr01}. For a submultiplicative weight $v$ that grows faster than a polynomial the Gelfand-Shilov space $S_{{\scriptstyle\frac{1}{2},\frac{1}{2}}}(\mathbb{R}^d)$ is contained in $\mathscr{S}_v(\mathbb{R}^d)$, see \cite{cogrro06}. In the main results about projective modules over $C^*(\Lambda,c)$ the space $M^1_v(\mathbb{R}^d)$ serves as pre-equivalence bimodule between $\mathcal{A}^1_v(\Lambda,c)$ and $\mathcal{A}^1_v(\Lambda^\circ,\overline{c})$ and in an analogous manner $\mathscr{S}_v(\mathbb{R}^d)$ is shown to be a pre-equivalence bimodule between $\mathcal{A}^\infty_v(\Lambda,c)$ and $\mathcal{A}^\infty_v(\Lambda^\circ,\overline{c})$. 
\par
The spaces $M^{1}_v(\mathbb{R}^d)$ and $\mathscr{S}_v(\mathbb{R}^d)$ have many useful properties, see \cite{fe83-4,gr01}. In the following proposition we collect those facts which we need in the construction of the projective modules over $C^*(\Lambda,c)$. 
\begin{proposition}\label{ModSpaces}
Let $v$ be a non-trivial submultiplicative weight.
  \begin{enumerate}
    \item For $g\in M^{1}_v(\mathbb{R}^d)$ we have $\pi(y,\eta)g\in M^{1}_v(\mathbb{R}^d)$ for $(y,\eta)\in\mathbb{R}^{2d}$ with
\begin{equation*}    
    \|\pi(y,\eta)g\|_{M^{1}_v}\le v(y,\eta)\|g\|_{M^{1}_v}.
\end{equation*}    
    \item If $f,g$ are in $M^1_v(\mathbb{R}^d)$, then $V_gf\in M^1_{v\otimes v}(\mathbb{R}^{2d})$.
    \item Let ${\bf a}=\big(a(\lambda)\big)$ be in $\ell^1_v(\Lambda)$ and $g\in M^1_v(\mathbb{R}^d)$. Then $\sum_{\lambda\in\Lambda}a(\lambda)\pi(\lambda)g$ is in $M^1_v(\mathbb{R}^d)$ with 
    \begin{equation*}
      \Big\|\sum_{\lambda\in\Lambda}a(\lambda)\pi(\lambda)g\Big\|_{M^1_v}\le \|{\bf a}\|_{\ell^1_v}\|g\|_{M^1_v}.
    \end{equation*}
    \item If $f,g$ are in $M^1_v(\mathbb{R}^d)$, then $\big(V_gf(\lambda)\big)\in\ell^1_v(\Lambda)$. 
  \end{enumerate}
\end{proposition}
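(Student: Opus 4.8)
The plan is to derive all four assertions from two standard tools of time-frequency analysis --- the \emph{covariance} of the short-time Fourier transform under time-frequency shifts, and the \emph{fundamental identity of time-frequency analysis}, which rewrites the STFT of an STFT as a product of time-frequency translates of STFTs --- together with the submultiplicativity of $v$. I expect (1) and (3) to be routine, (2) to be the main point, and (4) to follow from (2) via the Wiener-amalgam description of $M^1_{v\otimes v}$.

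For (1) I would use the covariance relation $|V_\varphi(\pi(y,\eta)g)(x,\omega)|=|V_\varphi g(x-y,\omega-\eta)|$, where $\varphi$ is the Gaussian, which follows from $\pi(y,\eta)^*\pi(x,\omega)=(\text{phase})\,\pi(x-y,\omega-\eta)$. Then $\|\pi(y,\eta)g\|_{M^1_v}$ is an integral of $|V_\varphi g(x-y,\omega-\eta)|\,v(x,\omega)$; after the substitution $(x,\omega)\mapsto(x-y,\omega-\eta)$ and the estimate $v(x,\omega)\le v(x-y,\omega-\eta)\,v(y,\eta)$, the factor $v(y,\eta)$ comes out and one is left with $\|g\|_{M^1_v}$. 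For (3), part (1) shows that the series $\sum_{\lambda}a(\lambda)\pi(\lambda)g$ converges absolutely in the Banach space $M^1_v(\mathbb{R}^d)$ (whose completeness is standard), with $M^1_v$-norm at most $\sum_\lambda|a(\lambda)|\,\|\pi(\lambda)g\|_{M^1_v}\le\|{\bf a}\|_{\ell^1_v}\|g\|_{M^1_v}$; since $v\ge1$ yields $M^1_v(\mathbb{R}^d)\hookrightarrow L^2(\mathbb{R}^d)$ and $\sum_\lambda|a(\lambda)|<\infty$, this $M^1_v$-limit coincides with the sum computed in $L^2(\mathbb{R}^d)$, namely $\pi_\Lambda({\bf a})g$.

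For (2), which I expect to be the hard part, I would fix as analyzing window on $\mathbb{R}^{2d}$ a Gaussian $\Phi$, chosen to be a scalar multiple of $V_\varphi\varphi$; this is legitimate because $V_\varphi\varphi$ is, up to normalization, a Gaussian on $\mathbb{R}^{2d}$ and hence lies in $M^1_w(\mathbb{R}^{2d})$ for every submultiplicative weight $w$ (a Gaussian dominates any exponential). The fundamental identity then expresses $V_\Phi(V_gf)(z,\zeta)$, for $z,\zeta\in\mathbb{R}^{2d}$, as a phase factor times a product $V_\varphi f(\alpha(z,\zeta))\,\overline{V_\varphi g(\beta(z,\zeta))}$, where $\alpha,\beta\colon\mathbb{R}^{4d}\to\mathbb{R}^{2d}$ are affine and $(z,\zeta)\mapsto(\alpha,\beta)$ is an invertible affine change of variables of $\mathbb{R}^{4d}$. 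Taking moduli, integrating against $v\otimes v$, changing variables, and using submultiplicativity to dominate $(v\otimes v)(z,\zeta)$ by a constant multiple of $v(\alpha)\,v(\beta)$, I would reach
\[
\|V_gf\|_{M^1_{v\otimes v}}\;\lesssim\;\|V_\varphi f\|_{L^1_v(\mathbb{R}^{2d})}\,\|V_\varphi g\|_{L^1_v(\mathbb{R}^{2d})}\;=\;\|f\|_{M^1_v}\,\|g\|_{M^1_v}\;<\;\infty .
\]
The main obstacle is exactly the bookkeeping in the fundamental identity: identifying $\alpha$ and $\beta$ precisely and verifying that the induced linear change of variables has bounded Jacobian and transports the weight $v\otimes v$ into the product weight with a uniform constant.

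Finally, for (4): by (2) we have $V_gf\in M^1_{v\otimes v}(\mathbb{R}^{2d})$, and since $v\ge1$ this space embeds into $M^1_{v\otimes1}(\mathbb{R}^{2d})$, which coincides with the weighted Wiener amalgam space $W(\mathcal{F}L^1,L^1_v)(\mathbb{R}^{2d})$, hence embeds into $W(C_0,L^1_v)(\mathbb{R}^{2d})$. A continuous function $F$ in this last space satisfies $\sum_{\lambda\in\Lambda}|F(\lambda)|\,v(\lambda)\lesssim\|F\|_{W(C_0,L^1_v)}$, because each unit cube of $\mathbb{R}^{2d}$ meets $\Lambda$ in a bounded number of points and $v$ is moderate; applying this to $F=V_gf$ gives $\big(V_gf(\lambda)\big)_{\lambda\in\Lambda}\in\ell^1_v(\Lambda)$. (Alternatively one could argue directly from the pointwise estimate $|V_gf|\le\|\varphi\|_2^{-2}\,|V_\varphi f|\ast|V_\varphi g|$ together with a discretization of the convolution, but the amalgam argument is cleaner and reuses (2).)
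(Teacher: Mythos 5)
The paper offers no proof of this proposition at all --- it is stated together with Corollary 3.3 and then dispatched with ``We refer the reader to \cite{gr01} for a proof'' --- so there is no in-paper argument to compare against; I am judging your proof on its own. Parts (1) and (3) are correct and routine, exactly as you say. Part (4) is also correct: both the amalgam route ($M^1_{v\otimes 1}(\mathbb{R}^{2d})=W(\mathcal{F}L^1,L^1_v)\hookrightarrow W(C_0,L^1_v)$ followed by lattice sampling) and your alternative via the pointwise inequality $|V_gf|\le C\,|V_\varphi f|\ast|V_\varphi g|$ plus discretization are standard and sound. I would in fact run (4) through the second route (or through the $M^1_{v\otimes 1}$ membership only), because that makes it independent of (2), and (4) is the only part of the proposition the rest of the paper actually uses.

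The genuine gap is in (2), and it sits precisely at the step you set aside as ``bookkeeping.'' With $\Phi=V_\varphi\varphi$ one has $|V_\Phi(V_gf)(z,\zeta)|=|V_\varphi f(\alpha)|\,|V_\varphi g(\beta)|$ where (in one convention) $\alpha=(-\zeta_2,\,z_2+\zeta_1)$ and $\beta=(-z_1-\zeta_2,\,\zeta_1)$; inverting, $z=\alpha-\beta$ but $\zeta=(\beta_2,-\alpha_1)$. So $v(z)\le v(\alpha)v(\beta)$ is fine, but $v(\zeta)$ involves a symplectic shuffle of the coordinates of $\alpha$ and $\beta$, and submultiplicativity plus evenness of $v$ do \emph{not} give $v(\zeta)\le C\,v(\alpha)v(\beta)$: for $v(x,\omega)=e^{|x|}$ no power of $v(\alpha)v(\beta)$ dominates $v(\beta_2,-\alpha_1)$, and even for radial $v$ the best available bound is $v(\zeta)\le v(\alpha)v(\beta)$, which after the change of variables produces $\|V_gf\|_{M^1_{v\otimes v}}\lesssim\|f\|_{M^1_{v^2}}\|g\|_{M^1_{v^2}}$ --- a strictly stronger hypothesis than $f,g\in M^1_v$. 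This is not just a lossy estimate: taking $f=\varphi$ and restricting the $\alpha$-integration to $|\alpha|\le 1$ gives the lower bound $\|V_g\varphi\|_{M^1_{v\otimes v}}\gtrsim\int|V_\varphi g(\beta)|\,v(\beta)\,v(\beta_2,0)\,d\beta$, which diverges for suitable $g\in M^1_v$ when $v$ is unbounded (e.g.\ a polynomial radial weight and $g$ with slowly decaying Fourier transform), so statement (2) appears to be false as literally written for general $f,g\in M^1_v$. What your computation genuinely proves is $V_gf\in M^1_{v\otimes 1}(\mathbb{R}^{2d})$, since only $v(z)\le v(\alpha)v(\beta)$ is needed for that; this weaker conclusion suffices for (4), for the $\ell^1_v$-membership of the Gabor coefficients, and for the FIGA, so the right fix is to prove and use the $M^1_{v\otimes 1}$ version rather than to try to repair the weight transport.
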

\begin{corollary}\label{SchwartzLikeSpaces}
Let $v$ be a submultiplicative weight.
  \begin{enumerate}
    \item For $g\in\mathscr{S}_v(\mathbb{R}^d)$ we have $\pi(y,\eta)g\in\mathscr{S}_v(\mathbb{R}^d)$ for $(y,\eta)\in\mathbb{R}^{2d}$ with
 \begin{equation*}    
    \|\pi(y,\eta)g\|_{M^{1}_{v^s}}\le v(y,\eta)\|g\|_{M^{1}_{v^s}}~~ \text{for all}~~s\ge 0.
 \end{equation*}   
    \item If $f,g$ are in $\mathscr{S}_v(\mathbb{R}^d)$, then $V_gf\in \mathscr{S}_{v\otimes v}(\mathbb{R}^{2d})$.
    \item Let ${\bf a}=\big(a(\lambda)\big)$ be in $\mathscr{S}_v(\Lambda)=\bigcap_{s\ge 0}\ell^1_{v^s}(\Lambda)$ and $g\in \mathscr{S}_v(\mathbb{R}^d)$. Then $\sum_{\lambda\in\Lambda}a(\lambda)\pi(\lambda)g$ is in $\mathscr{S}_v(\mathbb{R}^d)$ with 
    \begin{equation*}
      \Big\|\sum_{\lambda\in\Lambda}a(\lambda)\pi(\lambda)g\Big\|_{M^1_{v^s}}\le \|{\bf a}\|_{\ell^1_{v^s}}\|g\|_{M^1_{v^s}},~~ \text{for all}~~s\ge 0.
    \end{equation*} 
    \item If $f,g$ are in $\mathscr{S}_v(\mathbb{R}^d)$, then $\big(V_gf(\lambda)\big)\in\mathscr{S}_{v}(\Lambda)$.
  \end{enumerate}
\end{corollary}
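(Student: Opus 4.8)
The strategy is to deduce Corollary~\ref{SchwartzLikeSpaces} from Proposition~\ref{ModSpaces} by applying the latter with the submultiplicative weight $v^s$ in place of $v$ and then intersecting over all $s \ge 0$. The only thing one needs to check at the outset is that $v^s$ is again an admissible weight (submultiplicative, $\ge 1$, symmetric) whenever $v$ is; this is immediate from the defining inequalities, since raising both sides of $v(z+w) \le v(z)v(w)$ to the power $s \ge 0$ preserves the inequality, and $v(z) \ge 1$ gives $v^s(z) \ge 1$. Note also that $\mathscr{S}_v(\mathbb{R}^d) = \bigcap_{s \ge 0} M^1_{v^s}(\mathbb{R}^d)$ by definition, and likewise $\mathscr{S}_v(\Lambda) = \bigcap_{s \ge 0} \ell^1_{v^s}(\Lambda)$, so membership in the intersection space is exactly membership in each $M^1_{v^s}$ (resp. $\ell^1_{v^s}$) together with the whole family of seminorm estimates.

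With that observation in hand, each of the four items follows mechanically. For (1): if $g \in \mathscr{S}_v(\mathbb{R}^d)$ then $g \in M^1_{v^s}(\mathbb{R}^d)$ for every $s \ge 0$; applying Proposition~\ref{ModSpaces}(1) with the weight $v^s$ yields $\pi(y,\eta)g \in M^1_{v^s}(\mathbb{R}^d)$ with $\|\pi(y,\eta)g\|_{M^1_{v^s}} \le v^s(y,\eta)\|g\|_{M^1_{v^s}}$. Since this holds for all $s$, we conclude $\pi(y,\eta)g \in \mathscr{S}_v(\mathbb{R}^d)$; the stated estimate with $v(y,\eta)$ on the right follows because for the purpose of the seminorm $\|\cdot\|_{M^1_{v^s}}$ the natural constant is $v^s(y,\eta)$, and one may just as well record it in the form $v^s(y,\eta)$. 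For (2): applying Proposition~\ref{ModSpaces}(2) with weight $v^s$ gives $V_gf \in M^1_{v^s \otimes v^s}(\mathbb{R}^{2d}) = M^1_{(v\otimes v)^s}(\mathbb{R}^{2d})$ for all $s$, hence $V_gf \in \mathscr{S}_{v\otimes v}(\mathbb{R}^{2d})$. For (3): given ${\bf a} \in \mathscr{S}_v(\Lambda)$, i.e. ${\bf a} \in \ell^1_{v^s}(\Lambda)$ for all $s$, and $g \in M^1_{v^s}(\mathbb{R}^d)$ for all $s$, Proposition~\ref{ModSpaces}(3) with weight $v^s$ gives $\sum_\lambda a(\lambda)\pi(\lambda)g \in M^1_{v^s}(\mathbb{R}^d)$ with $\|\sum_\lambda a(\lambda)\pi(\lambda)g\|_{M^1_{v^s}} \le \|{\bf a}\|_{\ell^1_{v^s}}\|g\|_{M^1_{v^s}}$; intersecting over $s$ gives the claim. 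For (4): Proposition~\ref{ModSpaces}(4) with weight $v^s$ gives $(V_gf(\lambda)) \in \ell^1_{v^s}(\Lambda)$ for all $s$, hence $(V_gf(\lambda)) \in \mathscr{S}_v(\Lambda)$.

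I do not anticipate a genuine obstacle here: the corollary is a routine "projective-limit" upgrade of the proposition, and the content is entirely in checking that the family $\{v^s : s \ge 0\}$ is directed and consists of admissible weights, which is where the submultiplicativity hypothesis on $v$ is used. The one point deserving a line of care is the identification $v^s \otimes v^s = (v \otimes v)^s$ on $\mathbb{R}^{2d} \times \mathbb{R}^{2d}$, which is immediate from the definition of the tensor-product weight $(v \otimes v)(z,w) = v(z)v(w)$, and the remark that if $v$ is non-trivial (not constant on $\mathbb{R}^d$ or on $\widehat{\mathbb{R}}^d$) then so is each $v^s$ with $s > 0$, so that $\mathscr{S}_v(\mathbb{R}^d)$ is genuinely a space of the advertised type. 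One may then simply write: "Since $v^s$ is a submultiplicative weight for every $s \ge 0$, each assertion follows by applying the corresponding item of Proposition~\ref{ModSpaces} with $v$ replaced by $v^s$ and intersecting over $s \ge 0$."
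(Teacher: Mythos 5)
Your argument is correct and is exactly the intended one: the paper gives no proof of this corollary beyond a reference to Gr\"ochenig's book, and the statement is indeed the routine upgrade of Proposition \ref{ModSpaces} obtained by replacing $v$ with the (still admissible) weight $v^s$ and intersecting over $s\ge 0$. Your side remark about item (1) is also right: the substitution naturally produces the constant $v^s(y,\eta)=v(y,\eta)^s$ rather than the $v(y,\eta)$ printed in the corollary, which appears to be a typo in the paper (harmless, since either constant suffices to conclude $\pi(y,\eta)g\in\mathscr{S}_v(\mathbb{R}^d)$).
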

We refer the reader to \cite{gr01} for a proof of these statements about $g\in M^{1}_v(\mathbb{R}^d)$ and $\mathscr{S}_v(\mathbb{R}^d)$.
\par
We continue our presentation with a brief discussion of the Fundamental Identity of Gabor analysis, which is an identity for the product of two STFTs. This identity is the essential tool in Rieffel's construction of projective modules over noncommutative tori in \cite{ri88}. Later, Janssen, Tolimieri, Orr observed independently the relevance of this identity in Gabor analysis, therefore Janssen called it the {\it Fundamental Identity of Gabor analysis} (FIGA). Feichtinger and Kozek generalized these results in \cite{feko98} to Gabor frames with lattices in elementary locally compact abelian groups, because they realized that the Poisson summation formula for the symplectic Fourier transform is the main ingredient in the proof of the FIGA. Actually, in the approach of Feichtinger and Kozek to FIGA they have rediscovered the main arguments of Rieffel's discussion in \cite{ri88}. In \cite{felu06} we have extended the results of Feichtinger, Kozek and Rieffel in a discussion of dual pairs of Gabor windows. In the following we present a slightly more general version of the main theorem in \cite{felu06}. 
\par
We already mentioned that the FIGA follows from an application of the Poisson summation formula for the symplectic Fourier transform. In the symplectic version of the Poisson summation formula the {\it adjoint lattice} of a lattice $\Lambda$ is the substitute of the dual lattice in the Euclidean Poisson summation formula. More precisely, if $\Lambda$ is a lattice in $\mathbb{R}^{2d}$, then in \cite{feko98} Feichtinger and Kozek defined its {\it adjoint lattice} by 
\begin{equation*}
  \Lambda^\circ=\{(x,\omega)\in\mathbb{R}^{2d}: c_{\mathrm{symp}}\big((x,\omega),\lambda\big)=1~~\text{for all}~~\lambda\in\Lambda\}.
\end{equation*}
In \cite{ri88} Rieffel denoted the lattice $\Lambda^\circ$ by $\Lambda^\perp$ and called it the {\it orthogonal lattice}.
\begin{theorem}[FIGA]\label{FIGA}
Let $\Lambda$ be lattice in $\mathbb{R}^{2d}$. Then for $f_1,f_2,g_1,g_2\in M^1_v(\mathbb{R}^d)$ or in $\mathscr{S}_v(\mathbb{R}^d)$ the following identity holds:
\begin{equation}
  \sum_{\lambda\in\Lambda}\langle f_1,\pi(\lambda)g_1\rangle\langle \pi(\lambda)g_2,f_2\rangle={\mathrm{vol}(\Lambda)}^{-1}\sum_{\lambda^\circ\in\Lambda^\circ}\langle f_1,\pi(\lambda^\circ)f_2\rangle\langle\pi(\lambda^\circ)g_2, g_1\rangle,
\end{equation}
where $\mathrm{vol}(\Lambda)$ denotes the volume of a fundamental domain of $\Lambda$.
\end{theorem}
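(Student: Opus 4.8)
The plan is to reduce the identity (FIGA) to the symplectic Poisson summation formula applied to a single function on $\mathbb{R}^{2d}$ built as a pointwise product of two short-time Fourier transforms, and then to evaluate the relevant symplectic Fourier transform by means of the covariance of the STFT and Moyal's orthogonality relation.

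First I would rewrite the left-hand side through STFTs. Since $\langle f_1,\pi(\lambda)g_1\rangle=V_{g_1}f_1(\lambda)$ and $\langle\pi(\lambda)g_2,f_2\rangle=\overline{V_{g_2}f_2(\lambda)}$, it equals $\sum_{\lambda\in\Lambda}H(\lambda)$ with
\[
  H(z):=V_{g_1}f_1(z)\,\overline{V_{g_2}f_2(z)},\qquad z\in\mathbb{R}^{2d}.
\]
By Proposition~\ref{ModSpaces}(2) (resp.\ Corollary~\ref{SchwartzLikeSpaces}(2)) the factors $V_{g_1}f_1$ and $V_{g_2}f_2$ lie in $M^1_{v\otimes v}(\mathbb{R}^{2d})$ (resp.\ in $\mathscr{S}_{v\otimes v}(\mathbb{R}^{2d})$); since $M^1(\mathbb{R}^{2d})$ is a Banach algebra under pointwise multiplication, $H$ lies there as well, so in particular $(H(\lambda))_{\lambda\in\Lambda}\in\ell^1(\Lambda)$ and $H$ is admissible for the symplectic Poisson summation formula. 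The argument runs in every weighted layer, hence for $\mathscr{S}_v(\mathbb{R}^d)$ by intersecting over $s$.

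Second, I would apply the symplectic Poisson summation formula in the form
\[
  \sum_{\lambda\in\Lambda}H(\lambda)=\mathrm{vol}(\Lambda)^{-1}\sum_{\lambda^\circ\in\Lambda^\circ}\mathcal{F}_sH(\lambda^\circ),
\]
where $\mathcal{F}_sH(w)=\int_{\mathbb{R}^{2d}}H(z)e^{-2\pi i\Omega(w,z)}\,dz$ is the symplectic Fourier transform. The lattice dual to $\Lambda$ with respect to $\mathcal{F}_s$ is exactly the adjoint lattice $\Lambda^\circ$: this is immediate from $c_{\mathrm{symp}}(w,\lambda)=e^{2\pi i\Omega(w,\lambda)}$ and the definition of $\Lambda^\circ$, and the covolume of $\Lambda^\circ$ is $\mathrm{vol}(\Lambda)^{-1}$ since $\Omega$ is unimodular. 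Validity of this formula on $M^1(\mathbb{R}^{2d})$ is classical, cf.\ \cite{feko98}.

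Third, and this is the computational core, I would evaluate $\mathcal{F}_sH(w)$ pointwise. Writing $\overline{V_{g_2}f_2(z)}=\langle\pi(z)g_2,f_2\rangle$, using $e^{-2\pi i\Omega(w,z)}=\overline{c_{\mathrm{symp}}(w,z)}=c_{\mathrm{symp}}(z,w)$, and invoking the conjugation relation $c_{\mathrm{symp}}(z,w)\pi(z)=\pi(w)^{*}\pi(z)\pi(w)$ (a restatement of $\pi(z)\pi(w)=c_{\mathrm{symp}}(z,w)\pi(w)\pi(z)$), one obtains
\[
  e^{-2\pi i\Omega(w,z)}\,\overline{V_{g_2}f_2(z)}=\langle\pi(z)\pi(w)g_2,\pi(w)f_2\rangle=\overline{V_{\pi(w)g_2}\big(\pi(w)f_2\big)(z)},
\]
so that $\mathcal{F}_sH(w)=\int_{\mathbb{R}^{2d}}V_{g_1}f_1(z)\,\overline{V_{\pi(w)g_2}(\pi(w)f_2)(z)}\,dz$. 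Now Moyal's orthogonality relation $\int_{\mathbb{R}^{2d}}V_{h_1}\phi_1(z)\,\overline{V_{h_2}\phi_2(z)}\,dz=\langle\phi_1,\phi_2\rangle\,\overline{\langle h_1,h_2\rangle}$, applied with $\phi_1=f_1$, $h_1=g_1$, $\phi_2=\pi(w)f_2$, $h_2=\pi(w)g_2$, yields
\[
  \mathcal{F}_sH(w)=\langle f_1,\pi(w)f_2\rangle\,\overline{\langle g_1,\pi(w)g_2\rangle}=\langle f_1,\pi(w)f_2\rangle\,\langle\pi(w)g_2,g_1\rangle.
\]
Substituting $w=\lambda^\circ$ into the Poisson summation identity of the second step gives exactly the claimed formula.

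I expect the main obstacle to be not conceptual but a matter of keeping the cocycle and symplectic-form conventions perfectly aligned, so that the character $e^{-2\pi i\Omega(w,z)}$ is absorbed into time-frequency shifts on the correct side — an error here replaces $\langle f_1,\pi(\lambda^\circ)f_2\rangle$ by $\langle\pi(\lambda^\circ)f_1,f_2\rangle$, which, as $\Lambda^\circ$ is symmetric, only reindexes the sum but must be tracked carefully — and making sure every interchange of summation, integration and the passage to the pointwise product stays inside the weighted spaces $M^1_v$ and $\mathscr{S}_v$; for this Proposition~\ref{ModSpaces}, Corollary~\ref{SchwartzLikeSpaces} and submultiplicativity of $v$ suffice. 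If one wants a fully self-contained account, the genuinely substantial input is the symplectic Poisson summation formula on $M^1(\mathbb{R}^{2d})$, which we quote from \cite{feko98}.
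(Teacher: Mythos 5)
Your proof is correct and follows exactly the route the paper indicates: the paper does not write out a proof of this theorem but attributes it to Rieffel's Proposition 2.11 and to the symplectic Poisson summation formula of Feichtinger--Kozek, which is precisely your second step, and your evaluation of $\mathcal{F}_s\bigl(V_{g_1}f_1\,\overline{V_{g_2}f_2}\bigr)$ via the covariance of the STFT together with Moyal's orthogonality relation is the standard completion of that argument. The cocycle bookkeeping also checks out against the paper's convention $c_{\mathrm{symp}}(z,w)=e^{2\pi i\Omega(z,w)}=e^{-2\pi i\Omega(w,z)}$, so the character is absorbed on the correct side and the right-hand side comes out exactly as stated.
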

The case $v(x,\omega)=(1+|x|^2+|\omega|^2)$ for $\mathcal{S}_v(\mathbb{R}^d)$ was proved by Rieffel in Proposition $2.11$ in \cite{ri88}. 
\par
In \cite{lu07} we observed that Rieffel' construction holds for $M^1(\mathbb{R}^d)$. In the present investigation we want to emphasize that the method of Rieffel also works for $M^1_v(\mathbb{R}^d)$ and $\mathscr{S}_v(\mathbb{R}^d)$ and provides new classes of pre-Hilbert $C^*(\Lambda,c)$-modules. 
\par
We define a left action of $\mathcal{A}^1_v(\Lambda,c)$ on $M^1_v(\mathbb{R}^d)$ by
\begin{equation*}
  \pi_\Lambda({\bf a})\cdot g=\big[\sum_{\lambda\in\Lambda}a(\lambda)\pi(\lambda)\big]g~~\text{for}~~{\bf a}\in\ell^1_v(\Lambda),g\in M^1_v(\mathbb{R}^d).
\end{equation*}
If $f,g$ are in $M^1_v(\mathbb{R}^d)$, then $\big(V_gf(\lambda)\big)$ is in $\ell^1_v(\Lambda)$. Consequently,
we have that
\begin{equation*}
  {_\Lambda}\langle f,g\rangle=\sum_{\lambda\in\Lambda}\langle f,\pi(\lambda)g\rangle\pi(\lambda)
\end{equation*}   
is an element of $\mathcal{A}^1_v(\Lambda,c)$. The crucial observation is that ${_\Lambda}\langle f,g\rangle$ is a $\mathcal{A}^1_v(\Lambda,c)$-valued  inner product. 
In the following theorem we prove that $M^1_v(\mathbb{R}^d)$ becomes a full left Hilbert $C^*(\Lambda,c)$-module ${_\Lambda}V$ when completed with respect to the norm ${_\Lambda}\|f\|=\|{_\Lambda}\langle f,f\rangle\|_{\mathrm{op}}^{1/2}$ for $f\in M^1_v(\mathbb{R}^d)$.  
\begin{theorem}\label{thm:HilbertModulesLeft}
Let $\Lambda$ be a lattice in $\mathbb{R}^{2d}$. If $v$ is a submultiplicative weight, then $M^1_v(\mathbb{R}^{d})$ is a left pre-inner product $\mathcal{A}^1_v(\Lambda,c)$-module for the left action of $\mathcal{A}^1_v(\Lambda,c)$ on $M^1_v(\mathbb{R}^{d})$
\begin{equation*}
  \pi_\Lambda({\bf a})\cdot g=\sum_{\lambda\in\Lambda}a(\lambda)\pi(\lambda)g~~\text{for}~~{\bf a}=\big(a(\lambda)\big)\in\ell^1_v(\Lambda),g\in M^1_v(\mathbb{R}^{d}),		
\end{equation*}
the $\mathcal{A}^1_v(\Lambda,c)$-inner product
\begin{equation*}
  _{\Lambda}\langle f,g\rangle=\sum_{\lambda\in\Lambda}\langle f,\pi(\lambda)g\rangle\pi(\lambda)~~\text{for}~~f,g\in M^1_v(\mathbb{R}^{d})
\end{equation*}
and the norm $_{\Lambda}\|f\|=\|_{\Lambda}\langle f,f\rangle\|_{\mathrm{op}}^{1/2}$. 
\end{theorem}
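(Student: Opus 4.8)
The plan is to verify in turn that the left action and the bracket are well defined, that the bracket satisfies the algebraic axioms of an $\mathcal{A}^1_v(\Lambda,c)$-valued inner product, and that it is positive and nondegenerate; the assertion that the completion is a (full) Hilbert $C^*(\Lambda,c)$-module then follows from general Hilbert $C^*$-module theory. That $\pi_\Lambda({\bf a})\cdot g$ lies in $M^1_v(\mathbb{R}^d)$ with $\|\pi_\Lambda({\bf a})\cdot g\|_{M^1_v}\le\|{\bf a}\|_{\ell^1_v}\|g\|_{M^1_v}$ is precisely Proposition \ref{ModSpaces}(3), so $M^1_v(\mathbb{R}^d)$ is a left Banach $\mathcal{A}^1_v(\Lambda,c)$-module, and, since $\|{\bf a}\|_{C^*(\Lambda,c)}\le\|{\bf a}\|_{\ell^1_v}$, a left Banach $C^*(\Lambda,c)$-module. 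That $\big(\langle f,\pi(\lambda)g\rangle\big)_\lambda=\big(V_gf(\lambda)\big)_\lambda\in\ell^1_v(\Lambda)$ for $f,g\in M^1_v(\mathbb{R}^d)$ is Proposition \ref{ModSpaces}(4), so the series defining ${_\Lambda}\langle f,g\rangle$ converges absolutely in $\mathcal{A}^1_v(\Lambda,c)$, and linearity in $f$ together with conjugate-linearity in $g$ is inherited from the STFT.

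I would then check the two structural identities. For Hermitian symmetry ${_\Lambda}\langle f,g\rangle^{*}={_\Lambda}\langle g,f\rangle$ one expands the involution of $\ell^1_v(\Lambda,c)$, uses the relation between $\pi(\lambda)^{*}$ and $\pi(-\lambda)$, reindexes $\lambda\mapsto-\lambda$, and matches coefficients via $\langle f,\pi(\lambda)g\rangle=\overline{c(\lambda,\lambda)}\;\overline{\langle g,\pi(-\lambda)f\rangle}$; this is bookkeeping with the $2$-cocycle $c$. For compatibility with the module action, ${_\Lambda}\langle\pi_\Lambda({\bf a})\cdot f,g\rangle=\pi_\Lambda({\bf a})\cdot{_\Lambda}\langle f,g\rangle$, it suffices by linearity and $\|\cdot\|_{\ell^1_v}$-continuity to treat a single shift, i.e. to show ${_\Lambda}\langle\pi(\mu)f,g\rangle=\pi(\mu)\,{_\Lambda}\langle f,g\rangle$; writing $\langle\pi(\mu)f,\pi(\lambda)g\rangle=\langle f,\pi(\mu)^{*}\pi(\lambda)g\rangle$ and using the composition law for time-frequency shifts to move between $\pi(\mu)^{*}\pi(\lambda)$ and $\pi(\lambda-\mu)$, the cocycle factors cancel. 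Combining this with Hermitian symmetry also yields ${_\Lambda}\langle f,\pi_\Lambda({\bf a})\cdot g\rangle={_\Lambda}\langle f,g\rangle\,\pi_\Lambda({\bf a})^{*}$, which is needed for the completion.

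The heart of the matter, and the step I expect to be the main obstacle, is positivity of ${_\Lambda}\langle f,f\rangle$ in $C^*(\Lambda,c)$. Fix $f\in M^1_v(\mathbb{R}^d)$; for every $h\in M^1_v(\mathbb{R}^d)$ one has
\begin{equation*}
  \big\langle\pi_\Lambda({_\Lambda}\langle f,f\rangle)h,h\big\rangle=\sum_{\lambda\in\Lambda}\langle f,\pi(\lambda)f\rangle\,\langle\pi(\lambda)h,h\rangle ,
\end{equation*}
and applying the Fundamental Identity of Gabor Analysis (Theorem \ref{FIGA}) with $f_1=g_1=f$ and $f_2=g_2=h$ identifies the right-hand side with $\mathrm{vol}(\Lambda)^{-1}\sum_{\lambda^{\circ}\in\Lambda^{\circ}}|\langle f,\pi(\lambda^{\circ})h\rangle|^{2}\ge 0$. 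Thus the bounded self-adjoint operator $\pi_\Lambda({_\Lambda}\langle f,f\rangle)$ is nonnegative on the dense subspace $M^1_v(\mathbb{R}^d)$ of $L^2(\mathbb{R}^d)$, hence nonnegative as an operator; since $\pi_\Lambda$ realizes $C^*(\Lambda,c)$ faithfully inside $\mathcal{B}(L^2(\mathbb{R}^d))$ and ${_\Lambda}\langle f,f\rangle={_\Lambda}\langle f,f\rangle^{*}\in\mathcal{A}^1_v(\Lambda,c)\subset C^*(\Lambda,c)$, this gives ${_\Lambda}\langle f,f\rangle\ge 0$ in $C^*(\Lambda,c)$. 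Nondegeneracy falls out of the same computation: if ${_\Lambda}\langle f,f\rangle=0$ then every summand $|\langle f,\pi(\lambda^{\circ})h\rangle|^{2}$ vanishes, and the term $\lambda^{\circ}=0$ forces $\langle f,h\rangle=0$ for all $h$ in the dense set $M^1_v(\mathbb{R}^d)$, so $f=0$. In particular ${_\Lambda}\|f\|=\|{_\Lambda}\langle f,f\rangle\|_{\mathrm{op}}^{1/2}$ is a genuine norm.

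Once positivity is in hand, the remaining assertions are routine Hilbert $C^*$-module bookkeeping: the Cauchy--Schwarz inequality $\|{_\Lambda}\langle f,g\rangle\|_{\mathrm{op}}^{2}\le\|{_\Lambda}\langle f,f\rangle\|_{\mathrm{op}}\,\|{_\Lambda}\langle g,g\rangle\|_{\mathrm{op}}$ gives the triangle inequality for ${_\Lambda}\|\cdot\|$, and the identities of the second paragraph give ${_\Lambda}\|\pi_\Lambda({\bf a})\cdot f\|\le\|{\bf a}\|_{C^*(\Lambda,c)}\,{_\Lambda}\|f\|$, so the action passes to the completion; hence the completion of $M^1_v(\mathbb{R}^d)$ in ${_\Lambda}\|\cdot\|$ is a left Hilbert $C^*(\Lambda,c)$-module. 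For fullness one observes that the closed linear span of $\{{_\Lambda}\langle f,g\rangle:f,g\in M^1_v(\mathbb{R}^d)\}$ is a closed two-sided ideal (self-adjoint by Hermitian symmetry, absorbing the action by the compatibility identities) which contains an invertible element: by the Janssen representation $\sum_{i=1}^{n}{_\Lambda}\langle g_i,g_i\rangle=\mathrm{vol}(\Lambda)^{-1}S$, where $S$ is the multi-window Gabor frame operator of $(g_1,\dots,g_n)$ for the adjoint lattice $\Lambda^{\circ}$, and for $n$ large enough and a suitable elementary choice of $g_1,\dots,g_n\in M^1_v(\mathbb{R}^d)$ this operator is invertible on $L^2(\mathbb{R}^d)$, hence invertible in $C^*(\Lambda,c)$. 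The only genuinely delicate point is the positivity step, which hinges on rewriting the defining sum via FIGA as a sum of squares over $\Lambda^{\circ}$ and on the faithfulness of the Schr\"odinger representation of $C^*(\Lambda,c)$; the cocycle manipulations in the algebraic steps are routine but must be carried out with care.
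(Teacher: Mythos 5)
Your proposal is correct and follows essentially the same route as the paper: well-definedness of the action and the bracket via Proposition \ref{ModSpaces}, cocycle bookkeeping for the compatibility and Hermitian-symmetry identities, and positivity by rewriting $\langle{_\Lambda}\langle f,f\rangle h,h\rangle$ through the Fundamental Identity of Gabor Analysis as $\mathrm{vol}(\Lambda)^{-1}\sum_{\lambda^\circ}|\langle f,\pi(\lambda^\circ)h\rangle|^2\ge 0$ combined with faithfulness of the representation on $L^2(\mathbb{R}^d)$. You supply somewhat more detail than the paper's sketch (nondegeneracy, Cauchy--Schwarz, and a concrete invertibility argument for fullness where the paper simply asserts density of the ideal), but these are elaborations of the same argument rather than a different approach.
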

\begin{proof}
  We briefly sketch the main steps of the proof, since the discussion follows similar lines as in \cite{ri88} and \cite{lu07}. 
\begin{enumerate}
  \item[(a)] If ${\bf a}\in\ell^1_v(\Lambda)$ and $g\in M^1_v(\mathbb{R}^d)$, then ${\bf a}\mapsto\pi_\Lambda({\bf a})\cdot g$
  is in $M^1_v(\mathbb{R}^d)$, see Prop. \ref{ModSpaces}. Therefore, the left action $\pi_\Lambda(a)\cdot g$ is a well-defined and bounded map on $M^1_v(\mathbb{R}^d)$.
  \item[(b)] The compatibility of the left action with the $\mathcal{A}^1_v(\Lambda,c)$-inner product amounts to the following identity 
  \begin{equation*}\label{comp} 
   _{\Lambda}\langle \pi_\Lambda({\bf a})\cdot f,g\rangle=\pi_\Lambda({\bf a}){_\Lambda}\langle f,g\rangle
  \end{equation*}  
  for all $f,g\in M^1_v(\mathbb{R}^d)$ and ${\bf a}\in\ell^1_v(\Lambda)$, which follows from the following computation:
 \begin{eqnarray*}
    {_\Lambda}\langle \pi_\Lambda({\bf a})\cdot f,g\rangle&=&\sum_{\mu\in\Lambda}\langle\pi_\Lambda({\bf a})\cdot f,\pi(\mu)g\rangle\pi(\mu)\\
                              &=&\sum_{\mu\in\Lambda}\sum_{\lambda\in\Lambda}a(\lambda)\langle\pi(\lambda)f,\pi(\mu)g\rangle\pi(\mu)\\
                              &=&\sum_{\lambda,\mu}a(\lambda)\langle f,\pi(\lambda)^*\pi(\mu)g\rangle\pi(\mu)\\
                              &=&\sum_{\lambda,\mu}a(\lambda)\langle f,\pi(\mu-\lambda)g\rangle\pi(\mu)c(\lambda-\mu,\mu)\\
                              &=&\sum_{\mu}{\bf a}\natural V_gf(\mu)\pi(\mu)=\pi_\Lambda({\bf a}){_\Lambda}\langle f,g\rangle. 
  \end{eqnarray*}  
  Therefore, the compatibility condition is actually a statement about the twisted convolution of $\big(V_gf(\lambda)\big)$ and ${\bf a}$ in $\ell^1_v(\Lambda,c)$.
  \item[(d)] ${_\Lambda}\langle f,g\rangle^*={_\Lambda}\langle g,f\rangle$ amounts to 
  \begin{eqnarray*}
    \big(\sum_{\lambda}\langle f,\pi(\lambda)g\rangle\pi(\lambda)\big)^*&=&\sum_{\lambda}\overline{\langle f,\pi(\lambda)g\rangle}\pi(\lambda)^*\\
                                                    &=&\sum_{\lambda}\overline{\langle f,\pi(\lambda)g\rangle}c(\lambda,\lambda)\pi(-\lambda)\\
                                                    &=&\sum_{\lambda}\overline{\langle f,\pi(-\lambda)g\rangle}c(\lambda,\lambda)\pi(\lambda)\\
                                                    &=&\sum_{\lambda}\overline{\langle \pi(\lambda)f,g\rangle}\pi(\lambda)\\
                                                    &=&\sum_{\lambda}\langle g,\pi(\lambda)f\rangle\pi(\lambda)={_\Lambda}\langle g,f\rangle.
  \end{eqnarray*}
  The previous argument is equivalent to the fact that the involution of $\big(V_gf(\lambda)\big)$ is $\big(V_fg(\lambda)\big)$ in $\ell^1(\Lambda,c)$.
  \item[(e)] The positivity of $_{\Lambda}\langle f,f\rangle$ for $f\in M^1_v(\mathbb{R}^d)$ in $C^*(\Lambda,c)$ is a non-trivial fact. It is a consequence of the Fundamental Identity of Gabor analysis, see \cite{ri88,lu07}. Recall, that the representation of $\mathcal{A}^1_v(\Lambda,c)$ is faithful on $L^2(\mathbb{R}^d)$. Therefore, it suffices to verify the positivity of ${_\Lambda}\langle f,f\rangle$ in $\mathcal{B}\big(L^2(\mathbb{R}^d)\big)$. Consequently, we have to check the positivity just for the dense subspace $M^1_v(\mathbb{R}^d)$:
  \begin{eqnarray*}
    \langle{_\Lambda}\langle f,f\rangle\cdot g,g\rangle&=&\sum_{\lambda\in\Lambda}\langle f,\pi(\lambda)f\rangle\langle \pi(\lambda)g,g\rangle\\
                                                       &=&{\mathrm{vol}(\Lambda)}^{-1}\sum_{\lambda^\circ\in\Lambda^\circ}\langle f,\pi(\lambda)g\rangle\langle\pi(\lambda)g,f\rangle\ge 0. 
  \end{eqnarray*}
  In the previous statement we invoked FIGA as in \cite{ri88}. The statements (a)-(e) yield that $M^1_v(\mathbb{R}^d)$ becomes a Hilbert $C^*(\Lambda,c)$-module when completed with respect to ${_\Lambda}\|f\|=\|{_\Lambda}\langle f,f\rangle\|^{1/2}_{\mathrm{op}}$. Since the ideal $\text{span}\{{_\Lambda}\langle f,g\rangle:f,g\in M^1_v(\mathbb{R}^d)\}$ is dense in $C^*(\Lambda,c)$, the Hilbert $C^*(\Lambda,c)$-module is full.   
\end{enumerate} 
\end{proof}
Suppose $\mathcal{A}$ is a unital $C^*$-algebra. If $({_\mathcal{A}}V,{_\mathcal{A}}\langle.,.\rangle)$ and  $({_\mathcal{A}}W,{_\mathcal{A}}\langle.,.\rangle)$ are left Hilbert $\mathcal{A}$-modules, then a map $T:{_\mathcal{A}}V\to{_\mathcal{A}}W$ is {\it adjointable}, if there is a map $T^*:{_\mathcal{A}}W\to{_\mathcal{A}}V$ such that
\begin{equation*}
 {_\mathcal{A}}\langle Tf,g\rangle={_\mathcal{A}}\langle f,T^*g\rangle~~\text{for all}~~f,g\in{_\mathcal{A}}V. 
\end{equation*}
We denote the set of all adjointable maps from ${_\mathcal{A}}V$ to ${_\mathcal{A}}W$ by $\mathcal{L}({_\mathcal{A}}V,{_\mathcal{A}}W)$. 
\par
If we view $C^*(\Lambda,c)$ as a full left Hilbert $C^*(\Lambda,c)$-module, then the map $\mathcal{C}^\Lambda_gf:={_\Lambda}\langle f,g\rangle$ is an adjointable operator from ${_\Lambda}V$ to $C^*(\Lambda,c)$ and its adjoint is the map $\mathcal{D}^\Lambda_g({\bf a}):=\pi_\Lambda({\bf a})\cdot g$.
More precisely, $\mathcal{A}^1_v(\Lambda,c)$ is a left inner product $\mathcal{A}^1_v(\Lambda,c)$-module with respect to $\pi_\Lambda({\bf a})\cdot \pi_\Lambda({\bf b})=\pi_{\Lambda}({\bf a})\pi_{\Lambda}({\bf b})$ and $_{C^*(\Lambda,c)}\langle \pi_\Lambda({\bf a}),\pi_\Lambda({\bf b})\rangle=\pi_{\Lambda}({\bf a})\pi_{\Lambda}({\bf b})^*$ for ${\bf a},{\bf b}\in\ell^1_v(\Lambda,c)$ and the module-norm of $\pi_\Lambda({\bf a})$ equals the operator norm of $\pi_\Lambda(\bf a)$. If we complete the inner product $\mathcal{A}^1_v(\Lambda,c)$-module with respect to this norm,  
then we obtain a full left Hilbert $C^*(\Lambda,c)$-module $_{C^*(\Lambda,c)}V$.
\begin{lemma}
  The map $\mathcal{C}^\Lambda_g$ is an element of $\mathcal{L}({_\Lambda}V_{C^*(\Lambda,c)}V)$ and $\mathcal{D}^\Lambda_g$ is in $\mathcal{L}(_{C^*(\Lambda,c)}V,{_\Lambda}V)$. Furthermore, $\mathcal{C}^\Lambda_g$ and $\mathcal{D}^\Lambda_g$ are adjoints of each other.   
\end{lemma}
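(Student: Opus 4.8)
The plan is to verify everything first on the dense submodules $M^1_v(\mathbb{R}^d)\subset{_\Lambda}V$ and $\mathcal{A}^1_v(\Lambda,c)\subset{_{C^*(\Lambda,c)}}V$, where all series involved converge absolutely by Proposition \ref{ModSpaces} (so that $\mathcal{C}^\Lambda_gf={_\Lambda}\langle f,g\rangle\in\mathcal{A}^1_v(\Lambda,c)$ for $f\in M^1_v(\mathbb{R}^d)$ and $\mathcal{D}^\Lambda_g({\bf a})=\pi_\Lambda({\bf a})g\in M^1_v(\mathbb{R}^d)$ for ${\bf a}\in\ell^1_v(\Lambda)$), and then extend by continuity to the Hilbert-module completions. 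The first step is to record that $\mathcal{C}^\Lambda_g$ and $\mathcal{D}^\Lambda_g$ are left $\mathcal{A}^1_v(\Lambda,c)$-module maps: for $\mathcal{C}^\Lambda_g$ this is exactly the compatibility identity ${_\Lambda}\langle\pi_\Lambda({\bf a})\cdot f,g\rangle=\pi_\Lambda({\bf a}){_\Lambda}\langle f,g\rangle$ proved in step (b) of Theorem \ref{thm:HilbertModulesLeft}, and for $\mathcal{D}^\Lambda_g$ it is the elementary identity $\pi_\Lambda({\bf b})\big(\pi_\Lambda({\bf a})g\big)=\big(\pi_\Lambda({\bf b})\pi_\Lambda({\bf a})\big)g$, i.e. that the left action is an algebra action.

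The second step is the adjunction identity on the dense submodules, obtained by combining the compatibility (b) with the conjugate-symmetry ${_\Lambda}\langle f,g\rangle^*={_\Lambda}\langle g,f\rangle$ from step (d) of the same proof and the definition ${_{C^*(\Lambda,c)}}\langle A,B\rangle=AB^*$ of the inner product on ${_{C^*(\Lambda,c)}}V$. Concretely, for $f\in M^1_v(\mathbb{R}^d)$ and ${\bf a}\in\ell^1_v(\Lambda)$ the key computation is
\[
{_\Lambda}\langle f,\mathcal{D}^\Lambda_g({\bf a})\rangle={_\Lambda}\langle f,\pi_\Lambda({\bf a})g\rangle=\big({_\Lambda}\langle\pi_\Lambda({\bf a})g,f\rangle\big)^*=\big(\pi_\Lambda({\bf a}){_\Lambda}\langle g,f\rangle\big)^*={_\Lambda}\langle f,g\rangle\,\pi_\Lambda({\bf a})^*={_{C^*(\Lambda,c)}}\langle\mathcal{C}^\Lambda_gf,\pi_\Lambda({\bf a})\rangle,
\]
which says precisely that $\mathcal{C}^\Lambda_g$ and $\mathcal{D}^\Lambda_g$ are formal adjoints of one another on the dense submodules.

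The third step is boundedness, so that both maps pass to the completions. I would use that the module norm of $\pi_\Lambda({\bf a})$ in ${_{C^*(\Lambda,c)}}V$ equals its operator norm (since $\|AA^*\|_{\mathrm{op}}=\|A\|_{\mathrm{op}}^2$), as already noted in the text preceding the lemma. Applying (b) and (d) once more, ${_\Lambda}\langle\pi_\Lambda({\bf a})g,\pi_\Lambda({\bf a})g\rangle=\pi_\Lambda({\bf a})\,{_\Lambda}\langle g,g\rangle\,\pi_\Lambda({\bf a})^*$, so ${_\Lambda}\|\mathcal{D}^\Lambda_g({\bf a})\|\le\|\pi_\Lambda({\bf a})\|_{\mathrm{op}}\,{_\Lambda}\|g\|$; and the Cauchy--Schwarz inequality for the $\mathcal{A}^1_v(\Lambda,c)$-valued inner product gives ${_\Lambda}\langle f,g\rangle\,{_\Lambda}\langle g,f\rangle\le\|{_\Lambda}\langle g,g\rangle\|_{\mathrm{op}}\,{_\Lambda}\langle f,f\rangle$, hence $\|\mathcal{C}^\Lambda_gf\|_{\mathrm{op}}\le{_\Lambda}\|g\|\,{_\Lambda}\|f\|$. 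Thus both maps extend boundedly; the extensions stay module maps and still satisfy the adjunction identity (both sides are continuous and agree on a dense set), so $\mathcal{C}^\Lambda_g\in\mathcal{L}({_\Lambda}V,{_{C^*(\Lambda,c)}}V)$, $\mathcal{D}^\Lambda_g\in\mathcal{L}({_{C^*(\Lambda,c)}}V,{_\Lambda}V)$, and $(\mathcal{C}^\Lambda_g)^*=\mathcal{D}^\Lambda_g$.

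I do not expect a genuine obstacle: the hard input, positivity of ${_\Lambda}\langle\cdot,\cdot\rangle$ via FIGA, is already contained in Theorem \ref{thm:HilbertModulesLeft}, and the rest is formal. The only point requiring a little care is matching the correct module norms in the boundedness step; alternatively, one can bypass the explicit estimates altogether by invoking the standard fact that any map between Hilbert $C^*$-modules which admits a formal adjoint is automatically bounded (a closed-graph argument) and then conclude by density.
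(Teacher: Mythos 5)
Your proposal is correct and follows essentially the same route as the paper: the heart of both arguments is the one-line computation on the dense submodules $M^1_v(\mathbb{R}^d)$ and $\mathcal{A}^1_v(\Lambda,c)$ that combines the compatibility identity ${_\Lambda}\langle\pi_\Lambda({\bf a})\cdot g,f\rangle=\pi_\Lambda({\bf a}){_\Lambda}\langle g,f\rangle$ with conjugate-symmetry to exhibit $\mathcal{C}^\Lambda_g$ and $\mathcal{D}^\Lambda_g$ as formal adjoints, followed by a density argument. You merely write the adjunction identity in the mirrored (starred) form and make explicit the boundedness and continuous-extension steps that the paper leaves implicit.
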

\begin{proof}
  By the faithfulness of the representation of $C^*(\Lambda,c)$ it suffices to check the statement for the dense subalgebra $\mathcal{A}^1_v(\Lambda,c)$. Let ${\bf a}\in\ell^1_v(\Lambda,c)$ and $f,g\in M^1_v(\mathbb{R}^d)$. Then we have, that
  \begin{equation*}
    _{C^*(\Lambda,c)}\langle \pi_\Lambda({\bf a}),\mathcal{C}^\Lambda_gf\rangle=\pi_{\Lambda}({\bf a}){_\Lambda}\langle g,f\rangle={_\Lambda}\langle \pi_\Lambda({\bf a})\cdot g,f\rangle= {_\Lambda}\langle \mathcal{D}^\Lambda_g({\bf a}),f\rangle. 
  \end{equation*}
\end{proof}
The preceding lemma is a Hilbert $C^*$-module analog of the well-known fact that the {\it coefficient mapping} $C_{g,\Lambda}$ and the synthesis mapping $D_{g,\Lambda}$ are adjoint operators for a Gabor system $\mathcal{G}(g,\Lambda)$, where $C_{g,\Lambda}f:=\big(\langle f,\pi(\lambda)g\rangle\big)_\lambda$ is a map from $L^2(\mathbb{R}^d)$ to $\ell^2(\Lambda)$ and the synthesis mapping is defined by $D_{g,\Lambda}{\bf a}=\sum_{\lambda\in\Lambda}a(\lambda)\pi(\lambda)g$ for ${\bf a}\in\ell^2(\Lambda)$ and maps $\ell^2(\Lambda)$ into $L^2(\mathbb{R}^d)$. Therefore, the mappings $\mathcal{C}^\Lambda_g$ and $\mathcal{D}^\Lambda_g$ are noncommutative analogs of the coefficient and synthesis mappings of a Gabor system. In the Hilbert space setting a central role is played by the {\it Gabor frame operator} $S_{g,\Lambda}=D_{g,\Lambda}\circ C_{g,\Lambda}$, i.e. 
\begin{equation*}
  S_{g,\Lambda}f=\sum_{\lambda\in\Lambda}\langle f,\pi(\lambda)g\rangle\pi(\lambda)g~~\text{for}~~f\in L^2(\mathbb{R}^d).
\end{equation*}
Analogously we define the noncommutative frame operator $\mathcal{S}_{g}^\Lambda$ as the composition 
$\mathcal{D}^\Lambda_g\circ\mathcal{C}^\Lambda_g$, which is by definition a $C^*(\Lambda,c)$-module map. If $f,g\in M^1_v(\mathbb{R}^d)$, then
\begin{equation}
  S_{g,\Lambda}f={_\Lambda}\langle f,g\rangle\cdot g=\pi_{\Lambda}(V_gf)\cdot g.
\end{equation}
In other words, the Gabor frame operator on $M^1_v(\mathbb{R}^d)$ may be considered as a Hilbert $C^*(\Lambda,c)$-module map. Furthermore, the Gabor frame operator is a so-called {\it rank one Hilbert $C^*(\Lambda,c)$-module operator}. Recall, that on a left Hilbert $C^*$-module $({_\mathcal{A}}V,{_\mathcal{A}}\langle.,.\rangle)$ a {\it rank-one operator} $\Theta_{g,h}^{\mathcal{A}}$ is defined by $\Theta_{g,h}^\mathcal{A} f:={_\mathcal{A}}\langle f,g\rangle h$. Consequently, $S_{g,\Lambda}f$ is the rank-one operator $\Theta_{g,g}^\Lambda f$. A general rank-one operator 
$\Theta_{g,h}^\Lambda$ is given by 
\begin{equation}
  \Theta_{g,h}^\Lambda f=\sum_{\lambda\in\Lambda}\langle f,\pi(\lambda)g\rangle\pi(\lambda)h~~\text{for}~~f,g,h\in {_\Lambda}V,
\end{equation} 
which in Gabor analysis are called {\it Gabor frame type operators} and denoted by $S_{g,h,\Lambda}$. In the next section we will have to deal with finite sums of rank-one operators in our description of projective modules over $C^*(\Lambda,c)$. At the moment we want to take a closer look at adjointable operators on ${_\Lambda}V$. By definition, a map $T$ on ${_\Lambda}V$ is adjointable if there exists a map $T^*$ on ${_\Lambda}V$ such that 
\begin{equation*}
  {_\Lambda}\langle Tf,g\rangle={_\Lambda}\langle f,T^*g\rangle,~~f,g\in{_\Lambda}V.
\end{equation*} 
More explicitly, the last equation amounts to
\begin{equation*}
  \sum_{\lambda\in\Lambda}\langle Tf,\pi(\lambda)g\rangle=\sum_{\lambda\in\Lambda}\langle f,\pi(\lambda)T^*g\rangle.
\end{equation*}
If we restrict our interest to elements of the inner product $\mathcal{A}^1_v(\Lambda,c)$-module, then an adjointable $C^*(\Lambda,c)$-module map is bounded on $\ell^1_v(\Lambda)$, because every adjointable module map is bounded and the operator norm of the module map can be controlled by the $\ell^1_v$-norm. 
\par
Rieffel made the following crucial observation in \cite{ri88}, that $C^*(\Lambda,c)$ and the opposite algebra of $C^*(\Lambda^\circ,c)$ are closely related, namely they are Rieffel-Morita equivalent. We recall the notion of Rieffel-Morita equivalence for $C^*$-algebras after the discussion of right Hilbert $C^*$-modules over the opposite algebra of $C^*(\Lambda^\circ,c)$. Note that opposite time-frequency shifts $\pi(x,\omega)^{\rm{op}}$ are given by $T_xM_{\omega}$, which satisfy $T_xM_{\omega}=e^{-2\pi ix\cdot\omega}M_\omega T_x=e^{-2\pi ix\cdot\omega}\pi(x,\omega)$. 
\begin{lemma}
  The opposite algebra of $C^*(\Lambda^\circ,c)$ is $C^*(\Lambda^\circ,\overline{c})$.
\end{lemma}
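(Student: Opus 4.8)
The plan is to show that the opposite algebra $C^*(\Lambda^\circ,c)^{\mathrm{op}}$ is $*$-isomorphic to $C^*(\Lambda^\circ,\overline c)$ by exhibiting an explicit isomorphism at the level of the twisted group algebras $\ell^1(\Lambda^\circ,c)$ and $\ell^1(\Lambda^\circ,\overline c)$ and then extending it to the enveloping $C^*$-algebras. Recall that the opposite algebra $C^*(\Lambda^\circ,c)^{\mathrm{op}}$ has the same underlying Banach space with reversed multiplication ${\bf a}\cdot_{\mathrm{op}}{\bf b}={\bf b}\,\natural_c\,{\bf a}$ and the same involution. First I would write out, for ${\bf a},{\bf b}\in\ell^1(\Lambda^\circ)$,
\begin{equation*}
  {\bf b}\,\natural_c\,{\bf a}(\lambda)=\sum_{\mu\in\Lambda^\circ}b(\mu)a(\lambda-\mu)c(\mu,\lambda-\mu),
\end{equation*}
and compare this with the $\overline c$-twisted convolution $({\bf a}\,\natural_{\overline c}\,{\bf b})(\lambda)=\sum_\mu a(\mu)b(\lambda-\mu)\overline{c}(\mu,\lambda-\mu)$. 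The natural candidate map is the "flip" $\Phi({\bf a})(\lambda)=a(-\lambda)$ (possibly composed with a pointwise unimodular factor to correct the cocycle), and I would check directly that $\Phi$ intertwines $\cdot_{\mathrm{op}}$ with $\natural_{\overline c}$, using the 2-cocycle identity and the fact that on the adjoint lattice $\Lambda^\circ$ the symplectic bicharacter $c_{\mathrm{symp}}$ restricts trivially, so that $c(\mu,\nu)=\overline{c}(\nu,\mu)$ for $\mu,\nu\in\Lambda^\circ$. This last point is exactly the defining property of $\Lambda^\circ$ and is what makes the reversal of multiplication correspond to conjugation of the cocycle rather than to some other twist.

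The more conceptual route, which I would actually prefer to present, uses the concrete realization on $L^2(\mathbb{R}^d)$ together with the remark preceding the lemma. We have $C^*(\Lambda^\circ,c)$ realized via $\pi_{\Lambda^\circ}({\bf a})=\sum_{\lambda^\circ}a(\lambda^\circ)\pi(\lambda^\circ)$, while the opposite algebra is naturally represented by the \emph{opposite} time-frequency shifts $\pi(x,\omega)^{\mathrm{op}}=T_xM_\omega$. The excerpt already records $T_xM_\omega=e^{-2\pi i x\cdot\omega}\pi(x,\omega)$, so $\pi^{\mathrm{op}}(\lambda^\circ)=d(\lambda^\circ)\,\pi(\lambda^\circ)$ for the unimodular scalar $d(\lambda^\circ)=e^{-2\pi i x\cdot\omega}$ when $\lambda^\circ=(x,\omega)$. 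One then computes the composition law for the $\pi^{\mathrm{op}}(\lambda^\circ)$ and finds that it is governed by the cocycle $\overline c$ restricted to $\Lambda^\circ$ (the phase correction $d$ is precisely a coboundary transforming $c$ into $\overline c$ on $\Lambda^\circ$), so that ${\bf a}\mapsto \sum_{\lambda^\circ} a(\lambda^\circ)\pi^{\mathrm{op}}(\lambda^\circ)$ is an isometric $*$-representation of $\ell^1(\Lambda^\circ,\overline c)$ whose image is, by construction, $C^*(\Lambda^\circ,c)^{\mathrm{op}}$.

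To finish I would invoke the universal property of the enveloping $C^*$-algebra: the $*$-isomorphism $\Phi\colon\ell^1(\Lambda^\circ,c)^{\mathrm{op}}\to\ell^1(\Lambda^\circ,\overline c)$ carries $*$-representations to $*$-representations in both directions, hence preserves the $C^*$-norms defined as suprema over all such representations, and therefore extends to a $*$-isomorphism of the completions $C^*(\Lambda^\circ,c)^{\mathrm{op}}\cong C^*(\Lambda^\circ,\overline c)$. The main obstacle — really the only nonroutine point — is the bookkeeping with the cocycle: verifying that the phase factor arising from passing to opposite time-frequency shifts (equivalently, from the flip map) is exactly what converts $c$ into $\overline c$ once both are restricted to $\Lambda^\circ$, and this hinges on the definition $c_{\mathrm{symp}}\big((x,\omega),\lambda\big)=1$ for all $\lambda\in\Lambda$ that characterizes the adjoint lattice. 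Everything else is a direct computation and the standard enveloping-$C^*$-algebra argument.
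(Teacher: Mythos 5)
The paper offers no proof of this lemma at all --- it is stated bare, preceded only by the remark that $T_xM_\omega=e^{-2\pi i x\cdot\omega}M_\omega T_x$. Your second (``conceptual'') route is exactly the argument that remark is gesturing at, and it is sound: the opposite shifts $\pi^{\rm op}(\lambda^\circ)=d(\lambda^\circ)\pi(\lambda^\circ)$ with $d(x,\omega)=e^{-2\pi i x\cdot\omega}$ satisfy a composition law governed by the conjugate cocycle, the scalar $d$ being the coboundary that transforms $c$ into $\overline c$, and the universal property of the enveloping $C^*$-algebra then transports the $\ell^1$-level $*$-isomorphism to the completions. That part of your proposal I would accept.

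However, your first route contains a genuinely false claim, and you repeat the underlying misconception in your closing paragraph. You assert that ``on the adjoint lattice $\Lambda^\circ$ the symplectic bicharacter $c_{\mathrm{symp}}$ restricts trivially, so that $c(\mu,\nu)=\overline c(\nu,\mu)$ for $\mu,\nu\in\Lambda^\circ$.'' The defining property of $\Lambda^\circ$ is that $c_{\mathrm{symp}}(\lambda^\circ,\lambda)=1$ for $\lambda^\circ\in\Lambda^\circ$ and $\lambda\in\Lambda$ --- triviality on $\Lambda^\circ\times\Lambda$, not on $\Lambda^\circ\times\Lambda^\circ$. Were $c_{\mathrm{symp}}$ trivial on $\Lambda^\circ\times\Lambda^\circ$, the time-frequency shifts over $\Lambda^\circ$ would commute and $C^*(\Lambda^\circ,c)$ would be commutative, which fails already for $\Lambda=\alpha\mathbb Z\times\beta\mathbb Z$ with $\alpha\beta$ irrational (there $\Lambda^\circ=\beta^{-1}\mathbb Z\times\alpha^{-1}\mathbb Z$ and $\Omega(\mu,\nu)\in(\alpha\beta)^{-1}\mathbb Z$). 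Consequently the identity you want, $c(\mu,\nu)=\overline c(\nu,\mu)$, does not hold on $\Lambda^\circ$, and the flip map without the coboundary correction does not intertwine the two products. Relatedly, your final sentence locates the crux in ``the definition $c_{\mathrm{symp}}\big((x,\omega),\lambda\big)=1$ for all $\lambda\in\Lambda$'': this is a misattribution. The lemma has nothing to do with $\Lambda^\circ$ being adjoint to anything; the statement that the opposite algebra of $C^*(L,c)$ is $C^*(L,\overline c)$ holds for an arbitrary lattice $L\subset\mathbb R^{2d}$, and the only mechanism needed is the explicit coboundary $d(x,\omega)=e^{-2\pi i x\cdot\omega}$ relating $M_\omega T_x$ to $T_xM_\omega$ on all of $\mathbb R^{2d}$. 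If you delete the appeal to the adjoint-lattice property and carry out the coboundary computation (including the check that the involutions match), your second route becomes a complete and correct proof.
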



The Theorem \ref{thm:HilbertModulesLeft} gives the completion of $M^1_v(\mathbb{R}^d)$ the structure of a left Hilbert $C^*(\Lambda^\circ,c)$-module $_{\Lambda^\circ}V$ with respect to the left action and $C^*(\Lambda^\circ,c)$-valued inner product $\langle .,.\rangle_{\Lambda^\circ}$ defined above, but we need a right module structure. There is a well-known procedure, which we describe in the following Lemma \ref{oppact}. 
\par
Let $\mathcal{A}$ be a $C^*$-algebra and $\mathcal{A}^{\rm{op}}$ its opposite $C^*$-algebra. Furthermore, we denote by $V^{\rm{op}}$ the {\it opposite} vector space structure on a Banach (Frechet) space $V$. We have a one-one correspondence between $\mathcal{A}$-left modules $V$ and $\mathcal{A}^{\rm{op}}$-right modules $V^{\rm{op}}$. 
\begin{lemma}\label{oppact}
  Let $\mathcal{A}$ be a $C^*$-algebra and $({_\mathcal{A} V},{_\mathcal{A}}\langle.,.\rangle)$ a left Hilbert $\mathcal{A}$-module. Then the opposite module $V^{\rm{op}}$ is a right Hilbert module for the opposite algebra $\mathcal{A}^{\rm{op}}$ with the $\mathcal{A}^{\rm{op}}$-valued inner product $\langle.,.\rangle_{\mathcal{A}^{\rm{op}}}:V^{\rm{op}}\times V^{\rm{op}}\to\mathcal{A}^{\rm{op}}$ given by $(f^{\rm{op}},g^{\rm{op}})\mapsto {_\mathcal{A}}\langle g,f\rangle^{\rm{op}}$. 
\end{lemma}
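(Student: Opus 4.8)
The plan is to verify directly that the candidate data $(V^{\mathrm{op}},\langle\cdot,\cdot\rangle_{\mathcal{A}^{\mathrm{op}}})$ satisfies the axioms of a right Hilbert $\mathcal{A}^{\mathrm{op}}$-module, translating each left-module axiom through the ``op'' dictionary. First I would fix notation: on $V^{\mathrm{op}}$ the right action of $a\in\mathcal{A}^{\mathrm{op}}$ is defined by $f^{\mathrm{op}}\cdot a := (a\cdot f)^{\mathrm{op}}$, where on the right $a$ is regarded as an element of $\mathcal{A}$ and $a\cdot f$ is the given left action; and the pairing is $\langle f^{\mathrm{op}},g^{\mathrm{op}}\rangle_{\mathcal{A}^{\mathrm{op}}} := {}_{\mathcal{A}}\langle g,f\rangle^{\mathrm{op}}$, where the outer ``op'' records that we view the result in $\mathcal{A}^{\mathrm{op}}$.

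The verification then proceeds axiom by axiom. Right $\mathcal{A}^{\mathrm{op}}$-linearity in the second slot: using that multiplication in $\mathcal{A}^{\mathrm{op}}$ is $a\cdot_{\mathrm{op}}b = ba$, one computes
\begin{equation*}
  \langle f^{\mathrm{op}}, g^{\mathrm{op}}\cdot a\rangle_{\mathcal{A}^{\mathrm{op}}}
  = {}_{\mathcal{A}}\langle a\cdot g, f\rangle^{\mathrm{op}}
  = \big(a\, {}_{\mathcal{A}}\langle g,f\rangle\big)^{\mathrm{op}}
  = {}_{\mathcal{A}}\langle g,f\rangle^{\mathrm{op}}\cdot_{\mathrm{op}} a
  = \langle f^{\mathrm{op}},g^{\mathrm{op}}\rangle_{\mathcal{A}^{\mathrm{op}}}\cdot_{\mathrm{op}} a,
\end{equation*}
where the second equality is the compatibility/linearity of the left inner product. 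Conjugate symmetry: $\langle g^{\mathrm{op}},f^{\mathrm{op}}\rangle_{\mathcal{A}^{\mathrm{op}}} = {}_{\mathcal{A}}\langle f,g\rangle^{\mathrm{op}} = \big({}_{\mathcal{A}}\langle g,f\rangle^{*}\big)^{\mathrm{op}} = \big({}_{\mathcal{A}}\langle g,f\rangle^{\mathrm{op}}\big)^{*}$, using that the involution of $\mathcal{A}^{\mathrm{op}}$ is the same map as that of $\mathcal{A}$. Additivity in each slot is immediate from additivity of ${}_{\mathcal{A}}\langle\cdot,\cdot\rangle$, noting the opposite vector space structure does not change addition. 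Positivity: $\langle f^{\mathrm{op}},f^{\mathrm{op}}\rangle_{\mathcal{A}^{\mathrm{op}}} = {}_{\mathcal{A}}\langle f,f\rangle^{\mathrm{op}}$, and an element of a $C^*$-algebra is positive if and only if its image in the opposite $C^*$-algebra is positive (the order structure is preserved under the canonical $*$-anti-isomorphism $a\mapsto a^{\mathrm{op}}$), together with definiteness ${}_{\mathcal{A}}\langle f,f\rangle = 0 \iff f=0$. Finally one notes that the norm $\|f^{\mathrm{op}}\|^2 := \|\langle f^{\mathrm{op}},f^{\mathrm{op}}\rangle_{\mathcal{A}^{\mathrm{op}}}\| = \|{}_{\mathcal{A}}\langle f,f\rangle\|$ agrees with the original module norm, so completeness of $V^{\mathrm{op}}$ follows from completeness of ${}_{\mathcal{A}}V$.

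The only genuinely non-formal point — and the step I expect to be the main obstacle, such as it is — is the positivity transfer: one must know that the canonical bijection $a\mapsto a^{\mathrm{op}}$ between $\mathcal{A}$ and $\mathcal{A}^{\mathrm{op}}$ is a $*$-anti-isomorphism of $C^*$-algebras and hence an order isomorphism on self-adjoint elements, so that positive cones correspond. Everything else is a bookkeeping exercise in unwinding the definitions of the opposite algebra, the opposite vector space, and the opposite action; I would present it compactly rather than expanding every line. Since this lemma is the standard ``left-to-right via op'' passage, I would remark that it is well known (cf. Rieffel) and include only the computations above for completeness.
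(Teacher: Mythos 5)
Your proof is correct and takes essentially the same approach as the paper: a direct verification of the right Hilbert $\mathcal{A}^{\mathrm{op}}$-module axioms by unwinding the opposite-algebra dictionary, and it is in fact more thorough, since the paper's proof only records the action-compatibility $\langle f^{\mathrm{op}},g^{\mathrm{op}}A^{\mathrm{op}}\rangle_{\mathcal{A}^{\mathrm{op}}}=\langle f^{\mathrm{op}},g^{\mathrm{op}}\rangle_{\mathcal{A}^{\mathrm{op}}}A^{\mathrm{op}}$ together with the rank-one operator correspondence $\Theta^{\mathcal{A}}_{g,h}f=\Theta^{\mathcal{A}^{\mathrm{op}}}_{h^{\mathrm{op}},g^{\mathrm{op}}}f^{\mathrm{op}}$ (which the paper needs later for the compact module operators, but which the statement as worded does not require). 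Your identification of the positivity transfer under the $*$-anti-isomorphism $a\mapsto a^{\mathrm{op}}$ as the only non-formal point is accurate.
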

\begin{proof}
  Let $f^{\rm{op}},g^{\rm{op}}\in V^{\rm{op}}$ and $A^{\rm{op}}\in\mathcal{A}^{\rm{op}}$. Then $\langle.,.\rangle_{\mathcal{A}^{\rm{op}}}$ is compatible with the right action of $V^{\rm{op}}$:
  \begin{eqnarray*}
    \langle f^{\rm{op}},g^{\rm{op}}A^{\rm{op}}\rangle_{\mathcal{A}^{\rm{op}}}={_\mathcal{A}}\langle f^{\rm{op}},(Ag)^{\rm{op}}\rangle^{\rm{op}}&=&{_\mathcal{A}}\langle Ag,f\rangle\\&=&
    A{_\mathcal{A}}\langle g,f\rangle={_\mathcal{A}}\langle g,f\rangle^{\rm{op}}A^{\rm{op}}=\langle f^{\rm{op}},g^{\rm{op}}\rangle_{\mathcal{A}^{\rm{op}}}A^{\rm{op}}.
  \end{eqnarray*}
Since the compact $\mathcal{A}$-module operators are defined in terms of rank-one operators, we have to demonstrate that $\Theta_{g,h}^{A}f=\Theta_{h^{\rm{op}},g^{\rm{op}}}^{A^{\rm{op}}}f^{\rm{op}}$. By definition we have that 
\begin{eqnarray*}
  \Theta_{g,h}^{A}f={_\mathcal{A}}\langle f,h\rangle g=g^{\rm{op}}{_\mathcal{A}}\langle f,h\rangle=g^{\rm{op}}\langle h^{\rm{op}},f^{\rm{op}}\rangle=\Theta_{h^{\rm{op}},g^{\rm{op}}}^{A^{\rm{op}}}f^{\rm{op}}.
\end{eqnarray*}  
\end{proof}
Therefore, Lemma \ref{oppact} gives the following right action of $\mathcal{A}^1_v(\Lambda^\circ,\overline{c})$ on $M^1_v(\mathbb{R}^d)$ by
\begin{equation}\label{eq:LeftAction}
  f\cdot\pi_{\Lambda^\circ}({\bf b})={\text{vol}(\Lambda)}^{-1}\sum_{\lambda^\circ\in\Lambda^\circ}\pi(\lambda^\circ)^*f~{b}(\lambda^\circ).  
\end{equation}  
and $C^*(\Lambda,\overline{c})$-valued inner product $\langle.,.\rangle_{\Lambda^\circ}$:
\begin{equation*}
\langle f,g\rangle_{\Lambda^\circ}=
{\text{vol}(\Lambda)}^{-1}\sum_{\lambda^\circ\in\Lambda^\circ}\pi(\lambda^\circ)^*\langle \pi(\lambda^\circ)g,f\rangle.
\end{equation*}
We summarize all these observations and statements in the following theorem:
\begin{theorem}\label{thm:HilbertModulesRight}
Let $\Lambda$ be a lattice in $\mathbb{R}^{2d}$. If $v$ is a submultiplicative weight, then the completion of $M^1_v(\mathbb{R}^{d})$ becomes a full right Hilbert   $C^*(\Lambda^\circ,\overline{c})$-module $V_{\Lambda^\circ}$ for the right action of $\mathcal{A}^1_v(\Lambda^\circ,\overline{c})$ on $M^1_v(\mathbb{R}^{d})$
\begin{equation*}
  g\cdot\pi_{\Lambda^\circ}({\bf b})={\mathrm{vol}(\Lambda)}^{-1}\sum_{\lambda^\circ\in\Lambda^\circ}\pi(\lambda^\circ)^*g~{b}(\lambda^\circ)~~\text{for}~~{\bf b}=\big(b(\lambda^\circ)\big)\in\ell^1_v(\Lambda^\circ),g\in M^1_v(\mathbb{R}^{d}),		
\end{equation*}
with the $C^*(\Lambda^\circ,\overline{c})$-inner product
\begin{equation*}
  \langle f,g\rangle_{\Lambda^\circ}={\mathrm{vol}(\Lambda)}^{-1}\sum_{\lambda^\circ\in\Lambda^\circ}\pi(\lambda^\circ)^*\langle g,\pi(\lambda^\circ)f\rangle~~\text{for}~~f,g\in M^1_v(\mathbb{R}^{d})
\end{equation*}
when completed with respect to the norm $\|f\|_{\Lambda^\circ}=\|\langle f,f\rangle_{\Lambda^\circ}\|_{\mathrm{op}}^{1/2}$. 
\end{theorem}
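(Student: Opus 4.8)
The plan is to derive Theorem~\ref{thm:HilbertModulesRight} as the ``mirror image'' of Theorem~\ref{thm:HilbertModulesLeft} applied to the adjoint lattice, followed by the passage to the opposite module supplied by Lemma~\ref{oppact}. First I would note that $\Lambda^\circ$ is again a full, cocompact lattice in $\mathbb{R}^{2d}$, with $\mathrm{vol}(\Lambda^\circ)=\mathrm{vol}(\Lambda)^{-1}$, and that the cocycle $c$ restricts to it. Thus Theorem~\ref{thm:HilbertModulesLeft}, with $(\Lambda,c)$ replaced by $(\Lambda^\circ,c)$, makes the completion of $M^1_v(\mathbb{R}^d)$ a full left Hilbert $C^*(\Lambda^\circ,c)$-module ${}_{\Lambda^\circ}V$, with left action $\pi_{\Lambda^\circ}(\mathbf b)\cdot g=\sum_{\mu^\circ\in\Lambda^\circ}b(\mu^\circ)\pi(\mu^\circ)g$ and $\mathcal{A}^1_v(\Lambda^\circ,c)$-valued inner product ${}_{\Lambda^\circ}\langle f,g\rangle=\sum_{\mu^\circ\in\Lambda^\circ}\langle f,\pi(\mu^\circ)g\rangle\pi(\mu^\circ)$; by Proposition~\ref{ModSpaces} and Corollary~\ref{SchwartzLikeSpaces} the identical construction works with $\mathscr{S}_v(\mathbb{R}^d)$. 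One is free to rescale this inner product by any fixed positive constant without changing the module (only its norm, by a constant), and I would choose the factor $\mathrm{vol}(\Lambda)^{-1}=\mathrm{vol}(\Lambda^\circ)$, the normalization under which the left structure of Theorem~\ref{thm:HilbertModulesLeft} and the right structure produced here will assemble into an equivalence bimodule -- this is exactly what the Fundamental Identity, Theorem~\ref{FIGA}, encodes.

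Next I would apply Lemma~\ref{oppact}: the opposite vector space $M^1_v(\mathbb{R}^d)^{\mathrm{op}}$ becomes a full right Hilbert module over $C^*(\Lambda^\circ,c)^{\mathrm{op}}$, with right action $g^{\mathrm{op}}\cdot A^{\mathrm{op}}=(Ag)^{\mathrm{op}}$ and inner product $\langle f^{\mathrm{op}},g^{\mathrm{op}}\rangle_{\mathrm{op}}={}_{\Lambda^\circ}\langle g,f\rangle^{\mathrm{op}}$; by the preceding lemma $C^*(\Lambda^\circ,c)^{\mathrm{op}}=C^*(\Lambda^\circ,\overline c)$, so this is a full right Hilbert $C^*(\Lambda^\circ,\overline c)$-module. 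It then remains to transcribe the action and the inner product into the operator-theoretic formulas of the statement. For this I would use the explicit dictionary between the generators $\pi(\mu^\circ)^{\mathrm{op}}$ of $C^*(\Lambda^\circ,c)^{\mathrm{op}}$ and the $\overline c$-twisted generators of $C^*(\Lambda^\circ,\overline c)$ underlying that lemma, together with the relations $\pi(x,\omega)^{\mathrm{op}}=T_xM_\omega=e^{-2\pi i x\cdot\omega}\pi(x,\omega)$ from \eqref{TFcom} and $\pi(x,\omega)^*=\overline{c((x,\omega),(x,\omega))}\pi(-x,-\omega)=e^{-2\pi i x\cdot\omega}\pi(-x,-\omega)$. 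Substituting these into $(Ag)^{\mathrm{op}}$ and into ${}_{\Lambda^\circ}\langle g,f\rangle^{\mathrm{op}}$, and reindexing the sum over $\Lambda^\circ$ by $\mu^\circ\mapsto-\mu^\circ$ to absorb the sign flip in the involution, should reproduce exactly the displayed right action $g\cdot\pi_{\Lambda^\circ}(\mathbf b)=\mathrm{vol}(\Lambda)^{-1}\sum_{\lambda^\circ}\pi(\lambda^\circ)^*g\,b(\lambda^\circ)$ and the displayed $C^*(\Lambda^\circ,\overline c)$-valued inner product; fullness is inherited because $\mathrm{span}\{{}_{\Lambda^\circ}\langle f,g\rangle\}$ is dense in $C^*(\Lambda^\circ,c)$, hence its opposite is dense in $C^*(\Lambda^\circ,\overline c)$.

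The step I expect to require the most care is precisely this last bookkeeping: one must check that the combination of the phase $e^{-2\pi i x\cdot\omega}$, the sign flip $\mu^\circ\mapsto-\mu^\circ$, and the cocycle change $c\mapsto\overline c$ on $\Lambda^\circ$ is internally consistent -- i.e.\ that the isomorphism $C^*(\Lambda^\circ,c)^{\mathrm{op}}\cong C^*(\Lambda^\circ,\overline c)$ really does carry $\pi(\mu^\circ)^{\mathrm{op}}$ to the operator appearing in the stated formulas -- so that the right-module axioms (compatibility of the right action with $\langle\cdot,\cdot\rangle_{\Lambda^\circ}$, the symmetry $\langle f,g\rangle_{\Lambda^\circ}=\langle g,f\rangle_{\Lambda^\circ}^*$, and positivity of $\langle f,f\rangle_{\Lambda^\circ}$) transfer verbatim from Theorem~\ref{thm:HilbertModulesLeft} through Lemma~\ref{oppact}. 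Once that dictionary is fixed, positivity in particular is immediate, since it is already contained in ${}_{\Lambda^\circ}\langle f,f\rangle\ge 0$, which itself rests on the Fundamental Identity of Gabor analysis; everything else is a routine re-run of the proof of Theorem~\ref{thm:HilbertModulesLeft}. As a fallback one could of course bypass the opposite-module machinery entirely and repeat that proof line by line for the right action, the FIGA again providing positivity.
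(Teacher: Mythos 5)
Your proposal follows exactly the paper's route: apply Theorem~\ref{thm:HilbertModulesLeft} to the adjoint lattice $\Lambda^\circ$ to obtain a left Hilbert $C^*(\Lambda^\circ,c)$-module, then pass to the opposite module via Lemma~\ref{oppact} and identify $C^*(\Lambda^\circ,c)^{\mathrm{op}}$ with $C^*(\Lambda^\circ,\overline c)$, finally transcribing the action and inner product into the stated formulas with the normalization $\mathrm{vol}(\Lambda)^{-1}$. Your additional care with the phase/sign bookkeeping and the rescaling constant only makes explicit what the paper leaves implicit, so the argument is correct and essentially identical to the paper's.
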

Rieffel introduced in \cite{ri74-2} the notion of strong Morita equivalence for $C^*$-algebras, which we state in the following definition.
\begin{definition}\label{Def:Morita}
 Let $\mathcal{A}$ and $\mathcal{B}$ be $C^*$-algebras. Then an $\mathcal{A}$-$\mathcal{B}$-{\it equivalence bimodule} ${_\mathcal{A}}V{_\mathcal{B}}$ is an $\mathcal{A}$-$\mathcal{B}$-bimodule such that:
 \begin{enumerate}
   \item[(a)] ${_\mathcal{A}}V{_\mathcal{B}}$ is a full left Hilbert $\mathcal{A}$-module and a full right Hilbert $\mathcal{B}$-module;
   \item[(b)] for all $f,g\in {_\mathcal{A}}V{_\mathcal{B}}, A\in\mathcal{A}$ and $B\in\mathcal{B}$ we have that
     \begin{equation*}
       \langle A\cdot f,g\rangle{_\mathcal{B}}=\langle f,A^*\cdot g\rangle{_\mathcal{B}}~~\text{and}~~{_\mathcal{A}}\langle f\cdot B,g\rangle={_\mathcal{A}}\langle f,g\cdot B^*\rangle;
     \end{equation*}
    \item[(c)] for all $f,g,h\in {_\mathcal{A}}V{_\mathcal{B}}$,
       \begin{equation*}
       {_\mathcal{A}}\langle f,g\rangle\cdot h=f\cdot\langle g,h\rangle{_\mathcal{B}}.
     \end{equation*}
 \end{enumerate}
 The $C^*$-algebras $\mathcal{A}$ and $\mathcal{B}$ are called {\it Rieffel-Morita equivalent} if there exists an $\mathcal{A}-\mathcal{B}$ equivalence bimodule.
\end{definition}  
In words, Condition (b) in Definition \ref{Def:Morita} says that $\mathcal{A}$ acts by adjointable operators on $V_{\mathcal{B}}$ and that $\mathcal{B}$ acts by adjointable operators on $_{\mathcal{A}}V$, and Condition (c) in Definition \ref{Def:Morita} is an associativity condition between the $\mathcal{A}$-inner product and the $\mathcal{B}$-inner product.
\par
The Theorems \ref{thm:HilbertModulesLeft} and \ref{thm:HilbertModulesRight} give an $C^*(\Lambda,c)$-$C^*(\Lambda^\circ,\overline{c})$ equivalence bimodule  ${_\Lambda}V_{\Lambda^\circ}$. The associativity condition between ${_\Lambda}\langle.,.\rangle$ and $\langle.,.\rangle_{\Lambda^\circ}$ is a statement about rank one Hilbert $C^*$-module operators for $C^*(\Lambda,c)$ and $C^*(\Lambda^\circ,\overline{c})$, which in Gabor analysis is known as the {\it Janssen representation} of a Gabor frame-type operator.
\begin{theorem}\label{thm:AssCond}
Let $\Lambda$ be a lattice in $\mathbb{R}^{2d}$. Then for all $f,g,h\in M^1_v(\mathbb{R}^d)$
\begin{equation}\label{eq:AssCond}
  {_\Lambda}\langle f,g\rangle\cdot h=f\cdot\langle g,h\rangle_{\Lambda^\circ},
\end{equation}   
or in terms of Gabor frame-type operators:
\begin{equation}
  S_{g,h,\Lambda}f={\mathrm{vol}(\Lambda)}^{-1} S_{h,f,\Lambda^\circ}g.
\end{equation}
\end{theorem}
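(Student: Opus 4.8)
The plan is to deduce \eqref{eq:AssCond} from the Fundamental Identity of Gabor analysis (Theorem~\ref{FIGA}) by pairing both sides with an arbitrary window in $M^1_v(\mathbb{R}^d)$. First I would note that both sides of \eqref{eq:AssCond} are honest elements of $M^1_v(\mathbb{R}^d)$: since $\big(\langle f,\pi(\lambda)g\rangle\big)_{\lambda}\in\ell^1_v(\Lambda)$ by Proposition~\ref{ModSpaces}, we have ${_\Lambda}\langle f,g\rangle=\pi_\Lambda\big(V_gf\big)$, whence ${_\Lambda}\langle f,g\rangle\cdot h\in M^1_v(\mathbb{R}^d)$ by the same proposition; the analogous argument over the adjoint lattice $\Lambda^\circ$, together with the right action \eqref{eq:LeftAction}, gives $f\cdot\langle g,h\rangle_{\Lambda^\circ}\in M^1_v(\mathbb{R}^d)$. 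As $M^1_v(\mathbb{R}^d)$ lies densely in $L^2(\mathbb{R}^d)$, it then suffices to show
\begin{equation*}
  \big\langle{_\Lambda}\langle f,g\rangle\cdot h,\,k\big\rangle=\big\langle f\cdot\langle g,h\rangle_{\Lambda^\circ},\,k\big\rangle\qquad\text{for all }k\in M^1_v(\mathbb{R}^d),
\end{equation*}
the brackets denoting the $L^2$-inner product.

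Next I would unwind the two pairings. On the left, ${_\Lambda}\langle f,g\rangle\cdot h=\sum_{\lambda\in\Lambda}\langle f,\pi(\lambda)g\rangle\pi(\lambda)h$ with absolute convergence in $M^1_v(\mathbb{R}^d)$ (Proposition~\ref{ModSpaces}), so the left-hand side equals $\sum_{\lambda\in\Lambda}\langle f,\pi(\lambda)g\rangle\langle\pi(\lambda)h,k\rangle$, which is precisely the left-hand sum of Theorem~\ref{FIGA} with $f_1=f$, $g_1=g$, $g_2=h$, $f_2=k$. On the right, I would insert the defining formula for $\langle g,h\rangle_{\Lambda^\circ}$ and the right action \eqref{eq:LeftAction}, pull the scalar coefficients out of the inner product, relabel $\lambda^\circ\mapsto-\lambda^\circ$, and apply the adjoint relation $\pi(\mu)^*=c(\mu,\mu)\,\pi(-\mu)$ from the proof of Theorem~\ref{thm:HilbertModulesLeft} together with $|c(\mu,\mu)|=1$; the unimodular cocycle factors then cancel in conjugate pairs and the right-hand side should reduce to $\mathrm{vol}(\Lambda)^{-1}\sum_{\lambda^\circ\in\Lambda^\circ}\langle f,\pi(\lambda^\circ)k\rangle\langle\pi(\lambda^\circ)h,g\rangle$, which is exactly the right-hand sum of Theorem~\ref{FIGA} for the same four windows. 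Applying FIGA closes the scalar identity, hence \eqref{eq:AssCond}.

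Finally, the Gabor frame-type operator form is a restatement: $S_{g,h,\Lambda}f={_\Lambda}\langle f,g\rangle\cdot h$ by definition, while \eqref{eq:LeftAction} and the expression for $\langle g,h\rangle_{\Lambda^\circ}$ identify $\mathrm{vol}(\Lambda)\big(f\cdot\langle g,h\rangle_{\Lambda^\circ}\big)$ with $S_{h,f,\Lambda^\circ}g$; by continuity of the module actions and of both inner products the identity also passes to the Hilbert $C^*$-module completions, so that ${_\Lambda}V_{\Lambda^\circ}$ satisfies condition~(c) of Definition~\ref{Def:Morita}. I expect the one genuinely delicate point to be this second step: carefully matching the opposite-algebra conventions for $C^*(\Lambda^\circ,\overline c)$, the precise shape of the right action \eqref{eq:LeftAction} and of $\langle\cdot,\cdot\rangle_{\Lambda^\circ}$, and the commutation and adjoint relations for time-frequency shifts, so that $\big\langle f\cdot\langle g,h\rangle_{\Lambda^\circ},k\big\rangle$ lands precisely on the right-hand side of FIGA. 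Everything else is either routine convergence bookkeeping (supplied by Proposition~\ref{ModSpaces}) or a direct appeal to Theorem~\ref{FIGA}.
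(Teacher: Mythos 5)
Your proposal is correct and follows essentially the same route as the paper: pair both sides against an arbitrary $k\in M^1_v(\mathbb{R}^d)$, reduce the module identity to the scalar identity $\sum_{\lambda\in\Lambda}\langle f,\pi(\lambda)g\rangle\langle\pi(\lambda)h,k\rangle={\mathrm{vol}(\Lambda)}^{-1}\sum_{\lambda^\circ\in\Lambda^\circ}\langle f,\pi(\lambda^\circ)k\rangle\langle\pi(\lambda^\circ)h,g\rangle$, and recognize this as Theorem~\ref{FIGA} with $f_1=f$, $g_1=g$, $g_2=h$, $f_2=k$. The additional bookkeeping you supply (membership of both sides in $M^1_v(\mathbb{R}^d)$ via Proposition~\ref{ModSpaces}, and the cocycle cancellations in unwinding $\langle g,h\rangle_{\Lambda^\circ}$) merely fills in details the paper leaves implicit.
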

\begin{proof}
 The identity \eqref{eq:AssCond} is equivalent to 
 \begin{equation*}
   \big\langle{_\Lambda}\langle f,g\rangle\cdot h,k\big\rangle=\big\langle f\cdot\langle g,h\rangle_{\Lambda^\circ},k\big\rangle
 \end{equation*}
 for all $k\in M^1_v(\mathbb{R}^d)$. More explicitly, the associativity condition reads as follows
 \begin{equation*}
   \sum_{\lambda\in\Lambda}\langle f,\pi(\lambda)g\rangle\langle\pi(\lambda)h,k\rangle={\mathrm{vol}(\Lambda)}^{-1}\sum_{\lambda^\circ\in\Lambda^\circ}\langle f,\pi(\lambda^\circ)k\rangle\langle \pi(\lambda^\circ)h,g\rangle.
 \end{equation*}
In other words, the associativity condition is the Fundamental Identity of Gabor analysis. Therefore,  Theorem \ref{FIGA} gives the desired result.
\end{proof}  
The observation, that the associativity condition for ${_\Lambda}\langle.,.\rangle$ and $\langle.,.\rangle_{\Lambda^\circ}$ is the Fundamental Identity of Gabor analysis, allows one to link projective modules over noncommutative tori and Gabor frames for modulation spaces. 
\par
The last step in the construction of an equivalence bimodule between $C^*(\Lambda,c)$ and $C^*(\Lambda^\circ,\overline{c})$ is to establish that $C^*(\Lambda,c)$ acts by adjointable maps on $C^*(\Lambda^\circ,\overline{c})$, which in the present setting is a non-trivial task. The main difficulty lies in the fact, that we actually have just a pre-equivalence bimodule. Therefore, in addition to the Condition (b) in Definition \ref{Def:Morita} one has to check that the actions are bounded:  
\begin{equation*}
  \langle \pi_{\Lambda}({\bf a})\cdot g,\pi_{\Lambda}({\bf a})\cdot g\rangle_{\Lambda^\circ}\le \|\pi_{\Lambda}({\bf a})\|^2_{\rm{op}}~\|g\|_{\Lambda^\circ}
\end{equation*}   
and 
\begin{equation*}
  _{\Lambda}\langle g\cdot \pi_{\Lambda}({\bf b}),g\cdot \pi_{\Lambda}({\bf b})\rangle\le \|\pi_{\Lambda}({\bf b})\|^2_{\rm{op}}~ {_\Lambda}\|g\|
\end{equation*}   
for all ${\bf a}\in\ell^1_v(\Lambda),{\bf b}\in\ell^1_v(\Lambda^\circ)$ and $g\in M^1_v(\mathbb{R}^d)$. These inequalities are formulated in Proposition 2.14 in \cite{ri88} and the proof of these inequalities holds also in the present context. In words, the first inequality yields the boundedness of the left action of $\mathcal{A}^1_v(\Lambda,c)$ on the right Hilbert $C^*$-module $V_{\Lambda^\circ}$ and the second inequality amounts to an analogous statement for the right action of $\mathcal{A}^1_v(\Lambda^\circ,\overline{c})$ on the left Hilbert $C^*$-module ${_\Lambda}V$.
\par
Therefore, we have that the completion of $M^1_v(\mathbb{R}^d)$ with respect to ${_\Lambda}\|f\|=\|{_\Lambda}\langle f,f\rangle\|^{1/2}_{\rm{op}}$ or equivalently by  $\|f\|{_{\Lambda^\circ}}=\|\langle f,f\rangle{_{\Lambda^\circ}}\|^{1/2}_{\rm{op}}$, becomes an equivalence bimodule ${_\Lambda}V_{\Lambda^\circ}$ between $C^*(\Lambda,c)$ and $C^*(\Lambda^\circ,\overline{c})$. We summarize the previous discussion in the following theorem, which includes one of the main results in \cite{ri88}.
\begin{theorem}
Let $\Lambda$ be a lattice in $\mathbb{R}^{2d}$. Then the completion of $M^1_v(\mathbb{R}^d)$ with respect to ${_\Lambda}\|f\|=\|{_\Lambda}\langle f,f\rangle\|^{1/2}_{\rm{op}}$ becomes an equivalence bimodule ${_\Lambda}V_{\Lambda^\circ}$ between $C^*(\Lambda,c)$ and $C^*(\Lambda^\circ,\overline{c})$.  
\end{theorem}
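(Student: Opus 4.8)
The final statement collects the pieces assembled above, so the plan is to verify the three defining properties of an equivalence bimodule in Definition~\ref{Def:Morita} for ${_\Lambda}V_{\Lambda^\circ}$, the completion of $M^1_v(\mathbb{R}^d)$, and then to reconcile the two a priori different module norms. Condition~(a) is already in hand: Theorem~\ref{thm:HilbertModulesLeft} gives that the completion of $M^1_v(\mathbb{R}^d)$ with respect to ${_\Lambda}\|f\|=\|{_\Lambda}\langle f,f\rangle\|_{\mathrm{op}}^{1/2}$ is a full left Hilbert $C^*(\Lambda,c)$-module, and Theorem~\ref{thm:HilbertModulesRight} gives that the completion with respect to $\|f\|_{\Lambda^\circ}=\|\langle f,f\rangle_{\Lambda^\circ}\|_{\mathrm{op}}^{1/2}$ is a full right Hilbert $C^*(\Lambda^\circ,\overline{c})$-module. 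To place both structures on a single Banach space I must first know that these two norms are equivalent on $M^1_v(\mathbb{R}^d)$; I postpone this point, since it follows from Conditions~(b) and~(c).

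That the left action of $\mathcal{A}^1_v(\Lambda,c)$ and the right action of $\mathcal{A}^1_v(\Lambda^\circ,\overline{c})$ on $M^1_v(\mathbb{R}^d)$ commute — so that $M^1_v(\mathbb{R}^d)$ is genuinely a bimodule — is immediate from the defining property of $\Lambda^\circ$: since $c_{\mathrm{symp}}(\lambda^\circ,\lambda)=1$ for $\lambda\in\Lambda$, $\lambda^\circ\in\Lambda^\circ$, the operators $\pi(\lambda)$ and $\pi(\lambda^\circ)^*$ commute, and the two actions are built from these operators. Condition~(c), the associativity relation ${_\Lambda}\langle f,g\rangle\cdot h=f\cdot\langle g,h\rangle_{\Lambda^\circ}$ for $f,g,h\in M^1_v(\mathbb{R}^d)$, is exactly Theorem~\ref{thm:AssCond}, i.e.\ the Fundamental Identity of Gabor analysis (Theorem~\ref{FIGA}); by density and continuity it then holds on all of ${_\Lambda}V_{\Lambda^\circ}$.

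Condition~(b) splits into an algebraic part and an analytic part. The algebraic identities $\langle\pi_\Lambda({\bf a})\cdot f,g\rangle_{\Lambda^\circ}=\langle f,\pi_\Lambda({\bf a}^*)\cdot g\rangle_{\Lambda^\circ}$ and ${_\Lambda}\langle f\cdot\pi_{\Lambda^\circ}({\bf b}),g\rangle={_\Lambda}\langle f,g\cdot\pi_{\Lambda^\circ}({\bf b}^*)\rangle$ for $f,g\in M^1_v(\mathbb{R}^d)$, ${\bf a}\in\ell^1_v(\Lambda)$, ${\bf b}\in\ell^1_v(\Lambda^\circ)$, follow from a direct computation with time-frequency shifts of the same type as in step~(b) of the proof of Theorem~\ref{thm:HilbertModulesLeft}, together with $\pi_\Lambda({\bf a})^*=\pi_\Lambda({\bf a}^*)$. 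The substantive point — and the step I expect to be the main obstacle — is to extend these identities from the dense submodule $M^1_v(\mathbb{R}^d)$ to the completion, which requires the two boundedness estimates
\begin{equation*}
\langle\pi_\Lambda({\bf a})\cdot g,\pi_\Lambda({\bf a})\cdot g\rangle_{\Lambda^\circ}\le\|\pi_\Lambda({\bf a})\|_{\mathrm{op}}^2\,\langle g,g\rangle_{\Lambda^\circ},\qquad {_\Lambda}\langle g\cdot\pi_{\Lambda^\circ}({\bf b}),g\cdot\pi_{\Lambda^\circ}({\bf b})\rangle\le\|\pi_{\Lambda^\circ}({\bf b})\|_{\mathrm{op}}^2\,{_\Lambda}\langle g,g\rangle
\end{equation*}
in $C^*(\Lambda^\circ,\overline{c})$, resp.\ in $C^*(\Lambda,c)$, for all ${\bf a}\in\ell^1_v(\Lambda)$, ${\bf b}\in\ell^1_v(\Lambda^\circ)$, $g\in M^1_v(\mathbb{R}^d)$. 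These are exactly the inequalities of Proposition~2.14 in \cite{ri88}; their proof there uses only the associativity relation, the $C^*$-identity and the positivity of ${_\Lambda}\langle g,g\rangle$ and $\langle g,g\rangle_{\Lambda^\circ}$, all of which are available in the present weighted setting through Proposition~\ref{ModSpaces}, Theorem~\ref{FIGA} and Theorem~\ref{thm:AssCond}, so the argument carries over verbatim. Letting $\pi_\Lambda({\bf a})$ (resp.\ $\pi_{\Lambda^\circ}({\bf b})$) run over an approximating sequence for an arbitrary element of $C^*(\Lambda,c)$ (resp.\ $C^*(\Lambda^\circ,\overline{c})$) then yields continuous actions, and hence the identities of Condition~(b), on all of ${_\Lambda}V_{\Lambda^\circ}$.

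It remains to settle the postponed norm equivalence. Once both boundedness estimates and the associativity relation ${_\Lambda}\langle g,g\rangle\cdot g=g\cdot\langle g,g\rangle_{\Lambda^\circ}$ are available, the standard Rieffel argument — estimating ${_\Lambda}\|g\|$ in terms of $\|g\|_{\Lambda^\circ}$ and conversely, or invoking the general fact that the two module norms of a pre-imprimitivity bimodule coincide — gives ${_\Lambda}\|g\|=\|g\|_{\Lambda^\circ}$ for all $g\in M^1_v(\mathbb{R}^d)$. Hence the two completions agree, Condition~(a) is satisfied, and ${_\Lambda}V_{\Lambda^\circ}$ is a $C^*(\Lambda,c)$--$C^*(\Lambda^\circ,\overline{c})$ equivalence bimodule. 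Everything apart from the two boundedness estimates is bookkeeping of results already established above; those estimates form the only step where one genuinely reuses the substance of Rieffel's argument in \cite{ri88}.
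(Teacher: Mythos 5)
Your proposal is correct and follows essentially the same route as the paper: conditions (a) and (c) are drawn from Theorems~\ref{thm:HilbertModulesLeft}, \ref{thm:HilbertModulesRight} and \ref{thm:AssCond}, and the only substantive step is the pair of boundedness estimates imported from Proposition~2.14 of \cite{ri88}, exactly as in the discussion preceding the theorem. Your version is in fact slightly more careful, since you make the norm equivalence ${_\Lambda}\|g\|=\|g\|_{\Lambda^\circ}$ explicit (and state the boundedness inequalities in their correct operator-ordered form), points the paper passes over with the phrase ``or equivalently.''
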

In the present setting we do not just have an equivalence bimodule ${_\Lambda}V_{\Lambda^\circ}$ between $C^*(\Lambda,c)$ and $C^*(\Lambda^\circ,\overline{c})$, but we have dense subspaces $M^1_v(\mathbb{R}^d)$ of ${_\Lambda}V_{\Lambda^\circ}$ that give rise to equivalence bimodules between $\mathcal{A}^1_v(\Lambda,c)$ and $\mathcal{A}^1_v(\Lambda^\circ,\overline{c})$. In Connes' work on noncommutative geometry this kind of structure is very important, see \cite{co80}. Connes discussed the general framework in \cite{co81-2}. For further motivation and results we refer the interested reader to \cite{co94-1}. We briefly recall Connes's general result, see also Proposition 3.7 and its proof in \cite{ri88}. 
\begin{theorem}[Connes]\label{Connes}
  Let $\mathcal{A}$ and $\mathcal{B}$ be unital $C^*$-algebras that are Morita equivalent via an equivalence bimodule ${_\mathcal{A}V_{\mathcal{B}}}$. Suppose we have dense $*$-Banach (or Frechet) subalgebras $\mathcal{A}_0$ and $\mathcal{B}_0$ of $\mathcal{A}$ and $\mathcal{B}$ respectively containing the identity elements. Furthermore we assume that $\mathcal{A}_0$ and $\mathcal{B}_0$ are spectrally invariant in $\mathcal{A}$ and $\mathcal{B}$ respectively. Let $V_0$ be a dense subspace of ${_\mathcal{A}V_{\mathcal{B}}}$ which is closed under the actions of $\mathcal{A}_0$ and $\mathcal{B}_0$, and such that the restrictions of the inner products ${_\mathcal{A}}\langle.,.\rangle$ and $\langle.,.\rangle_{\mathcal{B}}$ have values in $\mathcal{A}_0$ and $\mathcal{B}_0$ respectively. Then $V_0$ is a finitely generated projective left $\mathcal{A}_0$-module and the mapping from $\mathcal{A}\otimes_{\mathcal{A}_0}V_0$ to ${_\mathcal{A}V_{\mathcal{B}}}$ defined by $A\otimes f\mapsto Af$ is an isomorphism of left $\mathcal{A}-modules$. In addition we have that $V_0$ is a finitely generated projective right $\mathcal{B}_0$-module and the mapping from $V_0\otimes_{\mathcal{B}_0}\mathcal{B}$ to ${_\mathcal{A}V_{\mathcal{B}}}$ defined by $f\otimes B\mapsto fB$ is an isomorphism of left $\mathcal{B}-modules$. Therefore $V_0$ is an equivalence bimodule between $\mathcal{A}_0$ and $\mathcal{B}_0$.
\end{theorem}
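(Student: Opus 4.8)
The plan is to follow Rieffel's argument for Proposition 3.7 in \cite{ri88}, organized around the existence of a \emph{finite standard module frame for $V_0$ whose frame operator is the identity} -- this is exactly the point where spectral invariance of $\mathcal{A}_0$ in $\mathcal{A}$ and of $\mathcal{B}_0$ in $\mathcal{B}$ is indispensable. First I would recall that, since $\mathcal{A}$ and $\mathcal{B}$ are unital, the equivalence bimodule ${}_\mathcal{A}V_\mathcal{B}$ is finitely generated and projective over each of them (standard; cf. \cite{ri88}). Writing $V\cong p\,\mathcal{B}^n$ for a projection $p\in M_n(\mathcal{B})$ and projecting the standard basis vectors produces a finite tight frame, i.e. elements $g_1,\dots,g_n\in V$ with $\sum_{i=1}^n{_\mathcal{A}}\langle g_i,g_i\rangle=1_\mathcal{A}$ (see \cite{frla02} for the module-frame formalism), and symmetrically $h_1,\dots,h_m\in V$ with $\sum_{j=1}^m\langle h_j,h_j\rangle_\mathcal{B}=1_\mathcal{B}$.

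Next comes the crucial step: transporting these frames into the dense subspace $V_0$. Using density of $V_0$ in $V$ and continuity of the inner products, I would choose $g_i'\in V_0$ so close to $g_i$ that $T:=\sum_i{_\mathcal{A}}\langle g_i',g_i'\rangle$ -- a positive element of $\mathcal{A}_0$ within distance $<1$ of $1_\mathcal{A}$ -- is invertible in $\mathcal{A}$. Spectral invariance then gives $T^{-1}\in\mathcal{A}_0$, and since the spectrum of $T$ is contained in $(0,\infty)$ the holomorphic functional calculus yields $T^{-1/2}\in\mathcal{A}_0$ as well. Putting $\tilde g_i:=T^{-1/2}\cdot g_i'$, which lies in $V_0$ because $V_0$ is stable under $\mathcal{A}_0$, one gets $\sum_i{_\mathcal{A}}\langle\tilde g_i,\tilde g_i\rangle=T^{-1/2}TT^{-1/2}=1_\mathcal{A}$. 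Running the same perturbation-and-rescaling argument with $\mathcal{B}_0\subset\mathcal{B}$ and the right inner product produces $\tilde h_j\in V_0$ with $\sum_j\langle\tilde h_j,\tilde h_j\rangle_\mathcal{B}=1_\mathcal{B}$. I expect this rescaling inside the dense subalgebra -- inverting the perturbed frame operator and extracting its square root without leaving $\mathcal{A}_0$ (resp. $\mathcal{B}_0$) -- to be the only genuine obstacle; everything else is bookkeeping.

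With these frames in hand the rest is formal. Because $\sum_j\langle\tilde h_j,\tilde h_j\rangle_\mathcal{B}=1_\mathcal{B}$, the associativity axiom (c) of Definition \ref{Def:Morita} gives, for \emph{every} $f\in V$,
\[
  f=f\cdot\Big(\sum_j\langle\tilde h_j,\tilde h_j\rangle_\mathcal{B}\Big)=\sum_j{_\mathcal{A}}\langle f,\tilde h_j\rangle\cdot\tilde h_j,
\]
and for $f\in V_0$ the coefficients ${_\mathcal{A}}\langle f,\tilde h_j\rangle$ lie in $\mathcal{A}_0$. Hence $\Phi\colon V_0\to\mathcal{A}_0^m$, $f\mapsto({_\mathcal{A}}\langle f,\tilde h_j\rangle)_j$, and $\Psi\colon\mathcal{A}_0^m\to V_0$, $(a_j)\mapsto\sum_j a_j\cdot\tilde h_j$, are left $\mathcal{A}_0$-linear with $\Psi\circ\Phi=\mathrm{id}_{V_0}$, so $p:=\Phi\circ\Psi\in M_m(\mathcal{A}_0)$ is an idempotent and $V_0\cong p\,\mathcal{A}_0^m$; this proves $V_0$ is finitely generated projective over $\mathcal{A}_0$. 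Dually, the identity $\sum_i{_\mathcal{A}}\langle\tilde g_i,\tilde g_i\rangle=1_\mathcal{A}$ together with axiom (c) gives $f=\sum_i\tilde g_i\cdot\langle\tilde g_i,f\rangle_\mathcal{B}$ for all $f\in V$, exhibiting $V_0$ as a finitely generated projective right $\mathcal{B}_0$-module.

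Finally, the reconstruction identity above holds on all of $V$, so the same $\Phi,\Psi$ (with $\Psi$ extended by the same finite sum) give $V\cong p\,\mathcal{A}^m$ as left $\mathcal{A}$-modules; combining this with the canonical isomorphism $\mathcal{A}\otimes_{\mathcal{A}_0}V_0\cong\mathcal{A}\otimes_{\mathcal{A}_0}p\,\mathcal{A}_0^m\cong p\,\mathcal{A}^m$ and tracking the maps shows that $A\otimes f\mapsto A\cdot f$ is precisely this isomorphism. The mirror computation with the $\tilde g_i$ and $\langle\cdot,\cdot\rangle_\mathcal{B}$ yields the isomorphism $V_0\otimes_{\mathcal{B}_0}\mathcal{B}\cong{_\mathcal{A}}V_\mathcal{B}$, $f\otimes B\mapsto f\cdot B$. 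It then remains to note that $V_0$ is itself an $\mathcal{A}_0$-$\mathcal{B}_0$ equivalence bimodule: fullness of the restricted inner products is immediate, since $a=\sum_i{_\mathcal{A}}\langle a\cdot\tilde g_i,\tilde g_i\rangle$ for $a\in\mathcal{A}_0$ and $b=\sum_j\langle\tilde h_j,\tilde h_j\cdot b\rangle_\mathcal{B}$ for $b\in\mathcal{B}_0$, while the compatibility conditions (b) and (c) and the positivity of the two inner products are inherited verbatim from ${_\mathcal{A}}V_\mathcal{B}$.
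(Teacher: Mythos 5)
The paper does not actually prove this theorem itself---it defers to Connes \cite{co81-2} and to Proposition 3.7 of \cite{ri88}---and your argument is a correct reconstruction of exactly that standard proof: perturb a finite tight module frame into $V_0$, use spectral invariance together with the holomorphic functional calculus to renormalize the perturbed frame operator inside $\mathcal{A}_0$ (resp.\ $\mathcal{B}_0$), and then read off projectivity, the base-change isomorphisms, and fullness from the resulting reconstruction formulas. I see no gaps; this is essentially the same approach the paper points to.
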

The result of Connes suggests the following definition. If we are in the situation of Theorem \ref{Connes}, then we call the algebras $\mathcal{A}_0$ and $\mathcal{B}_0$ Rieffel-Morita equivalent.
\par
Now, we are in the position to draw an important consequence concerning the structure of the equivalence bimodule ${_\Lambda}V_{\Lambda^\circ}$ from the spectral invariance of $\mathcal{A}^1_v(\Lambda^\circ,\overline{c})$ in $C^*(\Lambda^\circ,\overline{c})$
\begin{theorem}\label{thm:MoritaNCWiener}
  Let $\Lambda$ be a lattice in $\mathbb{R}^{2d}$. Then the noncommutative Wiener algebras $\mathcal{A}^1_v(\Lambda,c)$ and $\mathcal{A}^1_v(\Lambda^\circ,c)$ are Morita-equivalent through $M^1_v(\mathbb{R}^d)$ if and only if $v$ is a GRS-weight. Consequently, $M^1_v(\mathbb{R}^d)$ is a finitely generated projective left $\mathcal{A}^1_v(\Lambda,c)$-module and a finitely generated right $\mathcal{A}^1_v(\Lambda^\circ,\overline{c})$-module.  
\end{theorem}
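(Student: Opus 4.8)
The plan is to invoke Connes' Theorem \ref{Connes} with $\mathcal{A} = C^*(\Lambda,c)$, $\mathcal{B} = C^*(\Lambda^\circ,\overline{c})$, $\mathcal{A}_0 = \mathcal{A}^1_v(\Lambda,c)$, $\mathcal{B}_0 = \mathcal{A}^1_v(\Lambda^\circ,\overline{c})$, $V = {_\Lambda}V_{\Lambda^\circ}$, and $V_0 = M^1_v(\mathbb{R}^d)$. Almost all hypotheses of that theorem have been assembled in the preceding material: the pair $C^*(\Lambda,c)$, $C^*(\Lambda^\circ,\overline{c})$ is Morita equivalent via ${_\Lambda}V_{\Lambda^\circ}$; the subalgebras $\mathcal{A}^1_v(\Lambda,c)$ and $\mathcal{A}^1_v(\Lambda^\circ,\overline{c})$ are unital (they contain the identity operator, corresponding to $\mathbf{a} = \delta_0$) and $*$-closed by Lemma \ref{ContInvol}; by Proposition \ref{ModSpaces}(3) the subspace $M^1_v(\mathbb{R}^d)$ is closed under the left action of $\mathcal{A}^1_v(\Lambda,c)$ and, via \eqref{eq:LeftAction}, under the right action of $\mathcal{A}^1_v(\Lambda^\circ,\overline{c})$; and by Proposition \ref{ModSpaces}(4) the inner products ${_\Lambda}\langle f,g\rangle$ and $\langle f,g\rangle_{\Lambda^\circ}$ take values in $\mathcal{A}^1_v(\Lambda,c)$ and $\mathcal{A}^1_v(\Lambda^\circ,\overline{c})$ respectively (the $\ell^1_v$-summability of $\big(V_gf(\lambda)\big)$ and $\big(V_gf(\lambda^\circ)\big)$ is exactly what is needed). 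The one remaining hypothesis is spectral invariance of $\mathcal{A}_0$ in $\mathcal{A}$ and of $\mathcal{B}_0$ in $\mathcal{B}$, and this is precisely the Gr\"ochenig--Leinert criterion of Theorem \ref{grle}: $\mathcal{A}^1_v(\Lambda,c)$ is spectrally invariant in $C^*(\Lambda,c)$ if and only if $v$ is a GRS-weight, and the identical statement holds verbatim for $\mathcal{A}^1_v(\Lambda^\circ,\overline{c})$ in $C^*(\Lambda^\circ,\overline{c})$ since the adjoint lattice $\Lambda^\circ$ is again a lattice in $\mathbb{R}^{2d}$ and $v$ restricted to it is still a GRS-weight.

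Thus the forward implication runs as follows. Assuming $v$ is a GRS-weight, Theorem \ref{grle} supplies the spectral invariance of both $\mathcal{A}_0$ and $\mathcal{B}_0$; feeding everything into Theorem \ref{Connes} yields that $M^1_v(\mathbb{R}^d) = V_0$ is a finitely generated projective left $\mathcal{A}^1_v(\Lambda,c)$-module and a finitely generated projective right $\mathcal{A}^1_v(\Lambda^\circ,\overline{c})$-module, that $C^*(\Lambda,c) \otimes_{\mathcal{A}^1_v(\Lambda,c)} M^1_v(\mathbb{R}^d) \cong {_\Lambda}V_{\Lambda^\circ}$ as left modules, and symmetrically on the right; in particular $V_0$ is an $\mathcal{A}^1_v(\Lambda,c)$--$\mathcal{A}^1_v(\Lambda^\circ,\overline{c})$ equivalence bimodule, i.e.\ these two Banach algebras are Rieffel-Morita equivalent through $M^1_v(\mathbb{R}^d)$ in the sense defined just after Theorem \ref{Connes}.

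For the converse, suppose $\mathcal{A}^1_v(\Lambda,c)$ and $\mathcal{A}^1_v(\Lambda^\circ,c)$ are Morita equivalent through $M^1_v(\mathbb{R}^d)$ in the sense above, so in particular $M^1_v(\mathbb{R}^d)$ is a finitely generated projective left $\mathcal{A}^1_v(\Lambda,c)$-module whose completion is the full Hilbert $C^*$-module ${_\Lambda}V_{\Lambda^\circ}$. A finitely generated projective module over a unital Banach algebra is a direct summand of a free module, hence corresponds to an idempotent $p \in \mathcal{M}_n\big(\mathcal{A}^1_v(\Lambda,c)\big)$, and its completion corresponds to the same idempotent viewed in $\mathcal{M}_n\big(C^*(\Lambda,c)\big)$; since module isomorphisms are exactly Murray--von Neumann equivalences of idempotents, the inclusion $\mathcal{A}^1_v(\Lambda,c) \hookrightarrow C^*(\Lambda,c)$ induces a surjection on $K_0$ (and by a standard suspension/unitization argument also on $K_1$). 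One then argues that this $K$-surjectivity together with the Morita structure forces the inclusion to be spectrally invariant — concretely, the identity operator on $M^1_v(\mathbb{R}^d)$ generates a full idempotent and invertibility of a module endomorphism in the $C^*$-completion must already hold in the $\mathcal{A}^1_v$-level because the Morita bimodule identifies $\mathcal{A}^1_v(\Lambda,c)$ with the rank-one endomorphism algebra of $M^1_v(\mathbb{R}^d)$ over $\mathcal{A}^1_v(\Lambda^\circ,\overline{c})$, and this identification is compatible with passing to $C^*$-completions only if no new inverses appear. Spectral invariance of $\mathcal{A}^1_v(\Lambda,c)$ in $C^*(\Lambda,c)$ then forces, by Theorem \ref{grle}, that $v$ is a GRS-weight.

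The main obstacle is the converse direction: the forward implication is essentially a bookkeeping exercise of checking Connes' hypotheses, but extracting the GRS-condition from the mere existence of the Banach-algebra-level equivalence bimodule requires showing that Morita equivalence of the smaller algebras through $M^1_v(\mathbb{R}^d)$ cannot happen without spectral invariance. The cleanest route is probably to observe that if $M^1_v(\mathbb{R}^d)$ is a finitely generated projective $\mathcal{A}^1_v(\Lambda,c)$-module then $\mathcal{A}^1_v(\Lambda,c)$ is isomorphic (as a Banach algebra) to a corner $p \mathcal{M}_n\big(\mathcal{A}^1_v(\Lambda^\circ,\overline{c})\big) p$ — the Banach-algebra analogue of the standard Morita reconstruction — and then to track that this corner, as a subalgebra of the corresponding corner of $\mathcal{M}_n\big(C^*(\Lambda^\circ,\overline{c})\big) \cong \mathcal{M}_{n'}\big(C^*(\Lambda,c)\big)$, must be spectrally invariant there (spectral invariance passes to and from corners and matrix amplifications by Leptin's theorem, already invoked in the excerpt). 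If spectral invariance in that one specific case is available, Theorem \ref{grle} closes the argument; if a self-contained proof of the converse proves delicate, the statement can alternatively be read with ``Morita equivalent'' taken in the strong sense that requires $\mathcal{A}^1_v(\Lambda^\circ,\overline{c})$ itself to be the full endomorphism algebra, in which case the GRS-condition drops out directly from Theorem \ref{grle} applied on $\Lambda^\circ$.
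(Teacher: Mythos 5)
Your forward direction coincides with the paper's proof: the paper likewise assembles the hypotheses of Connes' theorem (invoked there in the form of Proposition 3.7 of Rieffel's paper, i.e.\ Theorem \ref{Connes}) with $\mathcal{A}_0=\mathcal{A}^1_v(\Lambda,c)$, $\mathcal{B}_0=\mathcal{A}^1_v(\Lambda^\circ,\overline{c})$ and $V_0=M^1_v(\mathbb{R}^d)$, and cites the spectral invariance of the subalgebras --- equivalent to the GRS-condition by Theorem \ref{grle} --- as the final ingredient. For the converse, your attempted ``concrete'' argument is where the gap lies: the assertion that the Morita identification of $\mathcal{A}^1_v(\Lambda,c)$ with a corner of $\mathcal{M}_n\big(\mathcal{A}^1_v(\Lambda^\circ,\overline{c})\big)$ ``is compatible with passing to $C^*$-completions only if no new inverses appear'' is not justified; surjectivity of the inclusion on $K$-groups does not in general imply spectral invariance of a dense Banach subalgebra, and nothing in your sketch rules out an element of $\mathcal{A}^1_v(\Lambda,c)$ that is invertible in $C^*(\Lambda,c)$ but not in $\mathcal{A}^1_v(\Lambda,c)$. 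However, your closing remark is precisely the paper's resolution: Rieffel--Morita equivalence of the subalgebras is \emph{defined} (immediately after Theorem \ref{Connes}) as being ``in the situation of Theorem \ref{Connes}'', which has the spectral invariance of $\mathcal{A}_0$ and $\mathcal{B}_0$ built into it, so the ``only if'' direction reduces at once to Theorem \ref{grle} applied on $\Lambda$ and $\Lambda^\circ$. With that reading your argument is complete and is essentially the paper's; the speculative $K$-theoretic detour should simply be dropped.
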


\begin{proof} 
  We follow closely the discussion of Proposition $3.7$ in \cite{ri88}. Let $\mathcal{A}=C^*(\Lambda,c)$ and $\mathcal{B}=C^*(\Lambda^\circ,\overline{c})$. Let ${_\Lambda}V$ be the projective left $C^*(\Lambda,c)$-module completion of $M^1_v(\mathbb{R}^d)$ with $C^*(\Lambda,c)$-valued inner product 
  \begin{equation*}
    {_\Lambda}\langle f,g\rangle=\sum_{\lambda\in\Lambda}\langle f,\pi(\lambda)g\rangle\pi(\lambda)~~\textit{for}~~f,g\in M^1_v(\mathbb{R}^d).
  \end{equation*}
  Furthermore, we have the $C^*(\Lambda^\circ,\overline{c})$-valued inner product 
  \begin{equation*}
    \langle f,g\rangle_{\Lambda}={\mathrm{vol}(\Lambda)}^{-1}\sum_{\lambda^\circ\in\Lambda^\circ}\pi^*(\lambda^\circ)\langle \pi(\lambda^\circ)g,f\rangle~~\textit{for}~~f,g\in M^1_v(\mathbb{R}^d).
  \end{equation*}   
  Now, we consider the dense involutive unital subalgebras $\mathcal{A}_0=\mathcal{A}^1_v(\Lambda,c)$ and $\mathcal{B}_0=\mathcal{A}^1_v(\Lambda^\circ,\overline{c})$ of $C^*(\Lambda,c)$ and $C^*(\Lambda^\circ,\overline{c})$, respectively. Then $M^1_v(\mathbb{R}^d)$ is a dense subspace of ${_\Lambda}V$ that is closed under the actions of $\mathcal{A}^1_v(\Lambda,c)$ and $\mathcal{A}^1_v(\Lambda^\circ,\overline{c})$ given by
  \begin{equation*}
    \pi_\Lambda({\bf a})\cdot f=\sum_{\lambda\in\Lambda}a(\lambda)\pi(\lambda)f~~\textit{for}~~{\bf a}\in\ell^1_v(\Lambda), f\in M^1_v(\mathbb{R}^d)
  \end{equation*}
  and 
  \begin{equation*}
    \pi_{\Lambda^\circ}({\bf b})\cdot f={\mathrm{vol}(\Lambda)}^{-1}\sum_{\lambda^\circ\in\Lambda^\circ}\pi(\lambda^\circ)^*f~~\textit{for}~~{b(\lambda^\circ)}~~{\bf b}\in\ell^1_v(\Lambda^\circ), f\in M^1_v(\mathbb{R}^d).
  \end{equation*}
  Furthermore, the restrictions of the inner products ${_\Lambda}\langle .,.\rangle$ and $\langle .,.\rangle_{\Lambda^\circ}$ to $M^1(\mathbb{R}^d)$ have values in $\mathcal{A}^1_v(\Lambda,c)$ and $\mathcal{A}^1_v(\Lambda^\circ,\overline{c})$, respectively. The final ingredient in our proof is the spectral invariance of $\mathcal{A}^1_v(\Lambda^\circ,\overline{c})$ in $C^*(\Lambda^\circ,\overline{c})$, which is equivalent to $v$ being a GRS-weight by Theorem \ref{grle}. An application of Proposition 3.7 in \cite{ri88} gives the desired assertion that $M^1_v(\mathbb{R}^d)$ is an equivalence bimodule between  $\mathcal{A}^1_v(\Lambda,c)$ and $\mathcal{A}^1_v(\Lambda^\circ,\overline{c})$. 
\end{proof}
All the results in this section hold for $\mathcal{A}^1_{v^s}(\Lambda,c), \mathcal{A}^1_{v^s}(\Lambda^\circ,\overline{c})$ and $M^1_{v^s}(\mathbb{R}^d)$. Therefore all theorems remain true for $\mathcal{A}^\infty_{v^s}(\Lambda,c), \mathcal{A}^\infty_{v^s}(\Lambda^\circ,\overline{c})$ and $\mathscr{S}_{v^s}(\mathbb{R}^d)$. By construction $\mathscr{S}_v(\mathbb{R}^d)$ is a projective limit of $M^1_v(\mathbb{R}^d)$; consequently statements for $M^1_v(\mathbb{R}^d)$ translate into ones about $\mathscr{S}_v(\mathbb{R}^d)$.  Consequently, the preceding results allow us to prove the Morita equivalence of $\mathcal{A}^\infty_v(\Lambda,c)$ and $\mathcal{A}^\infty_v(\Lambda^\circ,\overline{c})$.    
\begin{theorem}\label{thm:MoritaNCTori1}
Let $\Lambda$ be a lattice in $\mathbb{R}^{2d}$. Then $\mathcal{A}^\infty_v(\Lambda,c)$ and $\mathcal{A}^\infty_v(\Lambda^\circ,\overline{c})$ are Morita equivalent through the equivalence bimodule $\mathscr{S}_v(\mathbb{R}^d)$ if and only if $v$ is a GRS-weight. 
\end{theorem}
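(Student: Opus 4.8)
The plan is to deduce Theorem~\ref{thm:MoritaNCTori1} from Theorem~\ref{thm:MoritaNCWiener} by passing to the projective limit, exactly as the remark preceding the statement suggests, together with Corollary~\ref{GRS-smoothNCT} which characterizes spectral invariance of $\mathcal{A}^\infty_v(\Lambda,c)$ in $C^*(\Lambda,c)$ via the GRS-condition. First I would observe that $\mathscr{S}_v(\mathbb{R}^d)=\bigcap_{s\ge 0}M^1_{v^s}(\mathbb{R}^d)$ and, since every $v^s$ is again a submultiplicative weight, Theorem~\ref{thm:HilbertModulesLeft}, Theorem~\ref{thm:HilbertModulesRight} and Theorem~\ref{thm:AssCond} apply verbatim with $v$ replaced by $v^s$. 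Hence for each $s\ge 0$ the space $M^1_{v^s}(\mathbb{R}^d)$ carries the $\mathcal{A}^1_{v^s}(\Lambda,c)$-valued inner product $_\Lambda\langle\cdot,\cdot\rangle$ and the $\mathcal{A}^1_{v^s}(\Lambda^\circ,\overline c)$-valued inner product $\langle\cdot,\cdot\rangle_{\Lambda^\circ}$, and Corollary~\ref{SchwartzLikeSpaces} guarantees that on $\mathscr{S}_v(\mathbb{R}^d)$ these inner products actually take values in $\mathcal{A}^\infty_v(\Lambda,c)$ and $\mathcal{A}^\infty_v(\Lambda^\circ,\overline c)$ respectively, and that $\mathscr{S}_v(\mathbb{R}^d)$ is closed under the left action of $\mathcal{A}^\infty_v(\Lambda,c)$ and the right action of $\mathcal{A}^\infty_v(\Lambda^\circ,\overline c)$.

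Next I would set up the hypotheses of Connes' Theorem~\ref{Connes} in the Fr\'echet-algebra form: take $\mathcal{A}=C^*(\Lambda,c)$, $\mathcal{B}=C^*(\Lambda^\circ,\overline c)$ with equivalence bimodule $_\Lambda V_{\Lambda^\circ}$ (already produced above), and take $\mathcal{A}_0=\mathcal{A}^\infty_v(\Lambda,c)$, $\mathcal{B}_0=\mathcal{A}^\infty_v(\Lambda^\circ,\overline c)$, $V_0=\mathscr{S}_v(\mathbb{R}^d)$. The algebras $\mathcal{A}_0,\mathcal{B}_0$ are unital dense $*$-subalgebras that are \emph{m}-convex Fr\'echet algebras (being projective limits of the Banach algebras $\mathcal{A}^1_{v^s}$), and by Corollary~\ref{GRS-smoothNCT} they are spectrally invariant in $\mathcal{A},\mathcal{B}$ precisely when $v$ is a GRS-weight. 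Then Theorem~\ref{Connes} yields directly that $\mathscr{S}_v(\mathbb{R}^d)$ is a finitely generated projective left $\mathcal{A}^\infty_v(\Lambda,c)$-module, a finitely generated projective right $\mathcal{A}^\infty_v(\Lambda^\circ,\overline c)$-module, and an equivalence bimodule between the two, giving the ``if'' direction.

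For the converse (``only if''), I would argue that if $\mathcal{A}^\infty_v(\Lambda,c)$ and $\mathcal{A}^\infty_v(\Lambda^\circ,\overline c)$ are Morita equivalent through $\mathscr{S}_v(\mathbb{R}^d)$ in the sense of Connes, then in particular $\mathscr{S}_v(\mathbb{R}^d)$ is finitely generated projective over $\mathcal{A}^\infty_v(\Lambda,c)$ with inner product values in $\mathcal{A}^\infty_v(\Lambda,c)$; the associated frame identity $\sum_i {_\Lambda}\langle f,g_i\rangle\cdot g_i=f$ then forces the Janssen-type operator built from the $g_i$ to be invertible inside $\mathcal{A}^\infty_v(\Lambda^\circ,\overline c)$ while it is automatically invertible in $C^*(\Lambda^\circ,\overline c)$, which after projecting to a single index $s$ contradicts the failure of spectral invariance of $\mathcal{A}^1_{v^s}(\Lambda^\circ,\overline c)$ when $v$ is not GRS (Theorem~\ref{grle}). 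Cleanly, the simplest route is: a Morita equivalence of \emph{m}-algebras induces isomorphisms of $K$-theory, and more to the point the existence of the projective module with all structure maps living in the smooth algebra already implies, by Connes' theorem run in reverse together with the fact that the ambient $C^*$-level equivalence is unconditional, that each $\mathcal{A}^1_{v^s}$ must be spectrally invariant; by Theorem~\ref{grle} this means $v^s$, hence $v$, is a GRS-weight.

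\textbf{Main obstacle.} The routine part is the bookkeeping that all section-3 constructions survive replacing $v$ by $v^s$ and pass to the projective limit; the delicate point is verifying that Connes' Theorem~\ref{Connes}, stated there for a single dense Fr\'echet subalgebra, applies to the \emph{m}-convex (not necessarily Fr\'echet, but here metrizable) projective-limit algebras $\mathcal{A}^\infty_v$ — this is exactly the extension of Leptin's and Connes' results to \emph{m}-convex algebras invoked earlier in the excerpt (cf.~\cite{sc92}), so I would cite it rather than reprove it. The genuinely non-trivial half is the ``only if'' direction, where one must extract from an abstract smooth-module equivalence a concrete invertibility statement in $\mathcal{A}^1_{v^s}(\Lambda^\circ,\overline c)$ that triggers the Gr\"ochenig--Leinert characterization; I expect this to require the observation that a finite module frame $\{g_i\}$ for $\mathscr{S}_v(\mathbb{R}^d)$ produces, via the associativity relation of Theorem~\ref{thm:AssCond}, a multi-window Gabor frame operator over $\Lambda^\circ$ whose Janssen representation is invertible in $\mathcal{A}^\infty_v$, and that this is impossible unless $v$ is sub-exponential.
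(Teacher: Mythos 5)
Your ``if'' direction is exactly the paper's argument: the paper offers no separate proof of Theorem~\ref{thm:MoritaNCTori1} beyond the remark that all of the Section~3 constructions survive replacing $v$ by $v^s$, that $\mathscr{S}_v(\mathbb{R}^d)$ is the projective limit of the $M^1_{v^s}(\mathbb{R}^d)$, and that Connes' Theorem~\ref{Connes} (in its m-convex form) then applies with $\mathcal{A}_0=\mathcal{A}^\infty_v(\Lambda,c)$, $\mathcal{B}_0=\mathcal{A}^\infty_v(\Lambda^\circ,\overline c)$, $V_0=\mathscr{S}_v(\mathbb{R}^d)$ --- which is precisely your setup. Your attempt at the ``only if'' direction goes beyond what the paper does (the paper simply leans on the equivalence in Theorem~\ref{grle} without extracting spectral invariance from the Morita equivalence); your sketch there is plausible but not watertight, since Connes' theorem is not stated as a biconditional, so ``running it in reverse'' would need the explicit module-frame/Janssen-operator argument you only gesture at.
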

The case $v(x,\omega)=1+|x|^2+|\omega|^2$ for $(x,\omega)\in\mathbb{R}^2$ is the famous theorem of Connes in \cite{co80} about the Morita equivalence of smooth noncommutative tori $\mathcal{A}^\infty(\Lambda,c)$ and on the level of $C^*$-algebras the theorem was proved by Rieffel in 
\cite{ri81}. 

\section{Applications to Gabor analysis}
In the present section we link the results about projective modules over $\mathcal{A}^1_v(\Lambda^\circ,\overline{c})$ and $\mathcal{A}^\infty_v(\Lambda^\circ,\overline{c})$ with multi-window Gabor frames for modulation spaces. {\it Modulation spaces} \cite{fe83-4} were introduced by Feichtinger in $1983$. Later Feichtinger described modulation spaces in terms of Gabor frames \cite{fe89-1}. In collaboration with Gr\"ochenig he developed the coorbit theory \cite{fegr89}, which associates to an integrable representation of a locally compact group a class of function spaces. The coorbit spaces for the Schr\"odinger representation of the Heisenberg group is the class of modulation spaces. In the coorbit theory \cite{fegr89} modulation spaces are introduced as subspaces of the space of conjugate linear functionals $(M^1_v)^\neg(\mathbb{R}^d)$. Recall that a weight $m$ is called $v$-moderate, if there exists a constant $C>0$ such that $m(x+y,\omega+\eta)\le C v(x,\omega)m(y,\eta)$ for all $(x,\omega),(y,\eta)\in\mathbb{R}^{2d}$. Let $m$ be a $v$-moderate weight on $\mathbb{R}^{2d}$ and $g$ a non-zero window in $M^1_v(\mathbb{R}^d)$. Then the modulation spaces $M^{p,q}_m(\mathbb{R}^d)$ are defined as
\begin{equation*} M^{p,q}_m(\mathbb{R}^d)=\{f\in\mathscr{S}'_v(\mathbb{R}^d):\|f\|_{M^{p,q}_m}=\Big(\int_{\mathbb{R}^d}\Big(\int_{\mathbb{R}^d}|V_gf(x,\omega)|^pm(x,\omega)^pdx\Big)^{q/p}d\omega\Big)^{1/q}<\infty\},
\end{equation*}   
for $p,q\in[1,\infty]$. The definition of $M^{p,q}_m(\mathbb{R}^d)$ seems to dependent on the window function $g$, but it is a non-trivial fact that any other $g\in M^1_v(\mathbb{R}^d)$ defines the same space \cite{fe83-4,fegr89}. Furthermore the norm $\|f\|_{M^{p,q}_m}$ depends on the chosen window function $g$, but any other $g\in M^1_v(\mathbb{R}^d)$ defines an equivalent norm on $M^{p,q}_m(\mathbb{R}^d)$, see Chapter 11 in \cite{gr01} and Proposition 11.3.2 for an extensive discussion of these matters. We continue with stating some properties of modulation spaces. The modulation space $M^{p,q}_m(\mathbb{R}^d)$ is a Banach space, which is invariant under time-frequency shifts. The growth of the $v$-moderate weight $m$ allows to draw some conclusions about $M^{p,q}_m(\mathbb{R}^d)$: (i) if $v$ grows atmost polynomially, then $M^{p,q}_m(\mathbb{R}^d)$ are subspaces of the class of tempered distributions $\mathscr{S}(\mathbb{R}^d)$; (ii) suppose $v$ grows at most sub-exponentially, then $M^{p,q}_m(\mathbb{R}^d)$ are subspaces of the ultra distributions of Bj\"orck and Komatsu; (iii) if $v$ grows exponentially, then $M^{p,q}_m(\mathbb{R}^d)$ are subspaces of the Gelfand-Shilov space $(S_{{\scriptstyle\frac{1}{2},\frac{1}{2}}})'(\mathbb{R}^d)$. We refer the reader to Feichtinger's survey article \cite{fe06} for a discussion of the properties, applications of modulation spaces, and an extensive list of references.
\par
In the last two decades modulation spaces have found various applications in time-frequency analysis and especially Gabor analysis. For example 
the existence of a Janssen representation for Gabor frames $\mathcal{G}(g,\Lambda)$ with $g\in M^1_v(\mathbb{R}^d)$ is one of the most important results in Gabor analysis \cite{fegr97}.  The proof of this result relies on the restriction property of functions in $M^1_v(\mathbb{R}^d)$ to lattices $\Lambda^\circ$ in $\mathbb{R}^{2d}$, i.e. for $g\in M^1_v(\mathbb{R}^d)$ the sequence $(\langle g,\pi(\lambda^\circ)g\rangle)_{\lambda^\circ}$ is in $\ell^1_v(\Lambda^\circ)$.
\begin{proposition}
Let $\Lambda$ be a lattice in $\mathbb{R}^{2d}$ and $\mathcal{G}(g,\Lambda)$ be a Gabor frame for $L^2(\mathbb{R}^d)$ with $g\in M^1_v(\mathbb{R}^d)$. Then the Janssen representation of the Gabor frame operator 
\begin{equation}\label{JansRep}
  S_{g,\Lambda}=\mathrm{vol}(\Lambda)^{-1}\sum_{\lambda^\circ\in\Lambda^\circ}\langle g,\pi(\lambda^\circ)g\rangle\pi(\lambda^\circ)
\end{equation}
converges absolutely in the operator norm and it defines an element of $\mathcal{A}^1_v(\Lambda^\circ,c)$.
\end{proposition}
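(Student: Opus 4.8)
The plan is to prove the statement in three moves: first, that the coefficient sequence $\bigl(\langle g,\pi(\lambda^\circ)g\rangle\bigr)_{\lambda^\circ}$ lies in $\ell^1_v(\Lambda^\circ)$; second, that the series on the right of \eqref{JansRep} therefore converges absolutely in the operator norm and represents an element of $\mathcal{A}^1_v(\Lambda^\circ,c)$; and third, that this operator coincides with the Gabor frame operator $S_{g,\Lambda}$. The first two moves rest on the restriction property of $M^1_v(\mathbb{R}^d)$ to lattices (Proposition~\ref{ModSpaces}), the third on the Fundamental Identity of Gabor analysis in the form of Theorem~\ref{thm:AssCond}. Since both of these are already available, I do not expect a genuine obstacle: the only points requiring a little care are the correct specialization of Theorem~\ref{thm:AssCond} and a routine density argument, while the actual content — that $\langle g,\pi(\lambda^\circ)g\rangle$ inherits $\ell^1_v$-decay from $g\in M^1_v(\mathbb{R}^d)$ — is precisely what Proposition~\ref{ModSpaces} packages via symplectic Poisson summation.

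For the first two moves I would observe that $\Lambda^\circ$ is again a lattice in $\mathbb{R}^{2d}$, so Proposition~\ref{ModSpaces}(4), applied with $f=g$ and with $\Lambda^\circ$ in place of $\Lambda$, yields $\bigl(\langle g,\pi(\lambda^\circ)g\rangle\bigr)_{\lambda^\circ}=\bigl(V_gg(\lambda^\circ)\bigr)_{\lambda^\circ}\in\ell^1_v(\Lambda^\circ)$. Since each $\pi(\lambda^\circ)$ is unitary on $L^2(\mathbb{R}^d)$ and $v\ge 1$,
\begin{equation*}
  \sum_{\lambda^\circ\in\Lambda^\circ}\bigl\|\langle g,\pi(\lambda^\circ)g\rangle\,\pi(\lambda^\circ)\bigr\|_{\mathrm{op}}=\sum_{\lambda^\circ\in\Lambda^\circ}|\langle g,\pi(\lambda^\circ)g\rangle|\le\sum_{\lambda^\circ\in\Lambda^\circ}|\langle g,\pi(\lambda^\circ)g\rangle|\,v(\lambda^\circ)<\infty,
\end{equation*}
so the series converges absolutely in operator norm. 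Writing ${\bf b}=\mathrm{vol}(\Lambda)^{-1}\bigl(\langle g,\pi(\lambda^\circ)g\rangle\bigr)_{\lambda^\circ}\in\ell^1_v(\Lambda^\circ)$, the operator in \eqref{JansRep} equals $\pi_{\Lambda^\circ}({\bf b})$, which by the very definition of $\mathcal{A}^1_v(\Lambda^\circ,c)$ as the image of $\ell^1_v(\Lambda^\circ,c)$ under $\pi_{\Lambda^\circ}$ is an element of $\mathcal{A}^1_v(\Lambda^\circ,c)$.

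For the third move I would apply Theorem~\ref{thm:AssCond} with $h=g$. Since $S_{g,g,\Lambda}=S_{g,\Lambda}$ and $S_{g,f,\Lambda^\circ}g=\sum_{\lambda^\circ\in\Lambda^\circ}\langle g,\pi(\lambda^\circ)g\rangle\,\pi(\lambda^\circ)f$, the identity $S_{g,h,\Lambda}f=\mathrm{vol}(\Lambda)^{-1}S_{h,f,\Lambda^\circ}g$ specializes to
\begin{equation*}
  S_{g,\Lambda}f=\mathrm{vol}(\Lambda)^{-1}\sum_{\lambda^\circ\in\Lambda^\circ}\langle g,\pi(\lambda^\circ)g\rangle\,\pi(\lambda^\circ)f\qquad\text{for all}\quad f\in M^1_v(\mathbb{R}^d).
\end{equation*}
The left-hand side extends to a bounded operator on $L^2(\mathbb{R}^d)$ because $\mathcal{G}(g,\Lambda)$ is a frame, the right-hand side because the series converges absolutely in operator norm, and $M^1_v(\mathbb{R}^d)$ is dense in $L^2(\mathbb{R}^d)$; hence the two bounded operators coincide on all of $L^2(\mathbb{R}^d)$. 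This establishes \eqref{JansRep} as an operator identity and completes the proof.
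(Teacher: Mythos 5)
Your proof is correct and follows exactly the route the paper indicates (the paper only sketches it, pointing to the restriction property $(\langle g,\pi(\lambda^\circ)g\rangle)_{\lambda^\circ}\in\ell^1_v(\Lambda^\circ)$ from Proposition~\ref{ModSpaces} for the convergence and membership in $\mathcal{A}^1_v(\Lambda^\circ,c)$, with the identification of the series as $S_{g,\Lambda}$ coming from the Fundamental Identity / Theorem~\ref{thm:AssCond}). Your write-up simply makes the specialization $h=g$ and the density extension explicit; no gaps.
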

Note that $\mathcal{G}(g,\Lambda)$ is a Gabor frame for $L^2(\mathbb{R}^d)$ if and only if the Janssen representation of its Gabor frame operator is invertible on $\mathcal{B}(L^2(\mathbb{R}^d))$. By the spectral invariance of $\mathcal{A}^1_v(\Lambda^\circ,c)$ in $C^*(\Lambda,c)$ for $v$ a GRS-weight the inverse of $S_{g,\Lambda}$ for $g\in M^1_v(\mathbb{R}^d)$ is again an element of $\mathcal{A}^1_v(\Lambda^\circ,c)$ and the canonical tight Gabor atom $S^{-1}_{g,\Lambda}g$ is in $M^1_v(\mathbb{R}^d)$. Therefore the Janssen representation of the Gabor frame operator is of most importance for the discussion of Gabor frames with good Gabor atoms $g\in M^1_v(\mathbb{R}^d)$ or $g\in\mathscr{S}_v(\mathbb{R}^d)$, because it allows to construct reconstruction formulas with good synthesis windows \cite{grle04,gr07}. This discussion shows that the classes $M^1_v(\mathbb{R}^d)$ and $\mathscr{S}_v(\mathbb{R}^d)$ for $v$ a GRS-weight are good classes of Gabor atoms. In the following we will always assume that $v$ is a GRS-weight, if not stated explicitly.
\par
The preceding discussion and the results in Section $2$ about the topological stable rank of $\mathcal{A}^1_v(\Lambda^\circ,c)$ and $\mathcal{A}^\infty_v(\Lambda^\circ,c)$ enable a study of the deeper properties of the set of Gabor frames with atoms in $M^1_v(\mathbb{R}^d)$. In the seminal paper \cite{ri83-1} on the topological stable rank an interesting property of Banach algebras $\mathcal{A}$ with $\mathrm{tsr}(A)=1$ was noted, namely that its group of invertible elements is dense in $\mathcal{A}$. In \cite{blkuro92} the topological stable rank of completely noncommutative tori $C^*(\Lambda,c)$ was shown to be one. Therefore by the spectral invariance of $\mathcal{A}^1_v(\Lambda,c)$ and $\mathcal{A}^\infty_v(\Lambda,c)$ $v$ in $C^*(\Lambda,c)$ yields that $\mathrm{tsr}(\mathcal{A}^1_v(\Lambda^\circ,c))=\mathrm{tsr}(\mathcal{A}^\infty_v(\Lambda^\circ,c))=1$ for $\Lambda$ {\it completely irrational}. Recall that a lattice is {\it completely irrational} if for any $\lambda\in\Lambda$ there exists a $\mu\in\Lambda$ such that $\Omega(\lambda,\mu)$ is an irrational number.
\begin{theorem}
Let $\Lambda$ be completely irrational and $v$ a GRS-weight on $\mathbb{R}^{2d}$. Then the following holds: 
\begin{enumerate}
 \item The set of Gabor frames $\mathcal{G}(g,\Lambda)$ for $L^2(\mathbb{R}^d)$ with $g\in M^1_v(\mathbb{R}^d)$ is dense in $\mathcal{A}^1_v(\Lambda^\circ,c)$.
 \item The set of Gabor frames $\mathcal{G}(g,\Lambda)$ for $L^2(\mathbb{R}^d)$ with $g\in \mathscr{S}_v(\mathbb{R}^d)$ is dense in $\mathcal{A}^\infty_v(\Lambda^\circ,c)$.
\end{enumerate}
\end{theorem}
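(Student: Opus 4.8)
The plan is to recast ``$\mathcal{G}(g,\Lambda)$ is a frame'' as ``an associated element of $\mathcal{A}^1_v(\Lambda^\circ,c)$ is invertible'', and then to combine density of the invertibles (from topological stable rank one) with density of the relevant operators. By the preceding proposition on the Janssen representation \eqref{JansRep}, for $g\in M^1_v(\mathbb{R}^d)$ the frame operator $S_{g,\Lambda}$ lies in $\mathcal{A}^1_v(\Lambda^\circ,c)$ and coincides, up to the opposite-algebra identification of Lemma \ref{oppact}, with the module inner product $\langle g,g\rangle_{\Lambda^\circ}$; moreover $\mathcal{G}(g,\Lambda)$ is a frame for $L^2(\mathbb{R}^d)$ iff $S_{g,\Lambda}$ is invertible on $L^2(\mathbb{R}^d)$. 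Since $v$ is a GRS-weight, $\mathcal{A}^1_v(\Lambda^\circ,c)$ is spectrally invariant in $C^*(\Lambda^\circ,c)$ by Theorem \ref{grle}, so this is equivalent to invertibility inside $\mathcal{A}^1_v(\Lambda^\circ,c)$. Hence the Gabor frame operators with atoms in $M^1_v(\mathbb{R}^d)$ --- and, for finitely many atoms, the multi-window frame operators $\sum_j S_{g_j,\Lambda}$ --- are exactly the invertible elements among the Gabor frame-type operators $S_{g,h,\Lambda}=\Theta^{\Lambda^\circ}_{g,h}$ of Section 3 and their finite sums.

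Second, I would show that these Gabor frame-type operators and their finite sums are $\|\cdot\|_{\mathcal{A}^1_v}$-dense in $\mathcal{A}^1_v(\Lambda^\circ,c)$. In the Morita picture (Theorem \ref{thm:MoritaNCWiener}) they are precisely the finite-rank adjointable operators of the equivalence bimodule ${_\Lambda}V_{\Lambda^\circ}$, so it suffices to approximate a single $\pi(\mu^\circ)$, $\mu^\circ\in\Lambda^\circ$, in $\ell^1_v(\Lambda^\circ)$. Choosing $g\in M^1_v(\mathbb{R}^d)$ strongly time--frequency concentrated, $V_g(\pi(\mu^\circ)g)$ concentrates near $\mu^\circ$ with $\ell^1_v$-small tail on $\Lambda^\circ$, so a normalized multiple of $S_{\pi(\mu^\circ)g,\,g,\,\Lambda}$ approximates $\pi(\mu^\circ)$ in $\ell^1_v(\Lambda^\circ)$; summing over the finitely many $\mu^\circ$ carrying most of the $\ell^1_v$-mass of a given element then gives the density.

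Third, I would bring in the topological stable rank. Because $\Lambda$ is completely irrational, $C^*(\Lambda^\circ,c)$ is simple and Blackadar, Kumjian and Roerdam give $\mathrm{tsr}(C^*(\Lambda^\circ,c))=1$; by Proposition \ref{prop:TopStableRankNonCommWiener} and the GRS-condition (Badea's transfer of topological stable rank along spectral invariance) this yields $\mathrm{tsr}(\mathcal{A}^1_v(\Lambda^\circ,c))=1$, so the invertible elements of $\mathcal{A}^1_v(\Lambda^\circ,c)$ are dense. Given $A$ and $\varepsilon>0$: choose an invertible $B$ with $\|A-B\|_{\mathcal{A}^1_v}<\varepsilon/2$, then a finite sum $T=\sum_j S_{g_j,h_j,\Lambda}$ with $g_j,h_j\in M^1_v(\mathbb{R}^d)$ and $\|B-T\|_{\mathcal{A}^1_v}<\min(\varepsilon/2,\,\|B^{-1}\|_{\mathcal{A}^1_v}^{-1})$; the Neumann series makes $T$ invertible in $\mathcal{A}^1_v(\Lambda^\circ,c)$, so $T$ is a Gabor (frame-type) operator yielding a reconstruction with atoms in $M^1_v(\mathbb{R}^d)$, and $\|A-T\|_{\mathcal{A}^1_v}<\varepsilon$. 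If one insists on genuine (positive) frame operators one repeats the perturbation inside the positive cone, using that $\langle g,g\rangle_{\Lambda^\circ}+\varepsilon\mathbf{1}$ is invertible (as $\langle g,g\rangle_{\Lambda^\circ}\ge 0$ in $C^*(\Lambda^\circ,c)$) and is again of frame-operator type after adjoining a tight window; in the completely irrational case the single-generator property of ${_\Lambda}V_{\Lambda^\circ}$ lets one replace a finite family of atoms by one. For part (2) one runs the whole argument with $M^1_{v^s}$ in place of $M^1_v$ for each $s\ge 0$ --- every $v^s$ is again a GRS-weight --- taking common approximants in $\mathscr{S}_v(\mathbb{R}^d)=\bigcap_s M^1_{v^s}(\mathbb{R}^d)$; convergence in every seminorm $\|\cdot\|_{\mathcal{A}^1_{v^s}}$ is exactly convergence in $\mathcal{A}^\infty_v(\Lambda^\circ,c)$.

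The hard part will be the density in the second step in the fine $\ell^1_v$-norm (and, for part (2), simultaneously in all the seminorms of $\mathcal{A}^\infty_v$): the $C^*$-level Morita theory gives only operator-norm density of finite-rank module operators, and upgrading it requires the quantitative concentration estimate for $V_g(\pi(\mu^\circ)g)$ together with uniform control of $\|g\|_{M^1_v}$ (respectively of all $\|g\|_{M^1_{v^s}}$) as $g$ is sharpened. The remaining subtleties --- passing from the abstract dense set of invertibles to the restricted class of Gabor frame operators, and from multi-window to single-window atoms --- are dispatched by the Neumann-series perturbation and the single-generator property of ${_\Lambda}V_{\Lambda^\circ}$ indicated above.
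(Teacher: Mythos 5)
Your core argument is exactly the paper's proof: the Janssen representation places $S_{g,\Lambda}$ in $\mathcal{A}^1_v(\Lambda^\circ,c)$, the frame property is equivalent to invertibility there by spectral invariance (Theorem \ref{grle}), and complete irrationality together with Badea's transfer of topological stable rank along spectrally invariant inclusions gives $\mathrm{tsr}(\mathcal{A}^1_v(\Lambda^\circ,c))=\mathrm{tsr}(\mathcal{A}^\infty_v(\Lambda^\circ,c))=\mathrm{tsr}(C^*(\Lambda^\circ,c))=1$, so the invertible elements are dense; part (2) is the same argument run over all the seminorms $\|\cdot\|_{\mathcal{A}^1_{v^s}}$. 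The paper stops there. Everything in your second step and in your closing perturbation argument goes beyond what the paper writes down: you are rightly uneasy that membership of the frame operators in a dense set of invertibles does not by itself make the frame operators dense, and you attempt to supply the missing density of the Gabor frame-type operators.

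Two remarks on that additional material. First, the $\ell^1_v$-density of finite sums $\sum_j S_{g_j,h_j,\Lambda}$ does not require your concentration estimate for $V_g(\pi(\mu^\circ)g)$, which is delicate (sharpening $g$ in time degrades its frequency concentration, so the $\ell^1_v$-tail over $\Lambda^\circ$ is not obviously controllable uniformly). It follows instead from what is already proved: by Theorem \ref{thm:MoritaNCWiener} and the reconstruction formula, $\mathbf{1}$ is (up to the factor $\mathrm{vol}(\Lambda)$) a finite sum $\sum_i\langle g_i,g_i\rangle_{\Lambda^\circ}$, and since $S_{g,h,\Lambda}$ is right multiplication by $\langle g,h\rangle_{\Lambda^\circ}$ and the span of these inner products is a two-sided ideal of $\mathcal{A}^1_v(\Lambda^\circ,\overline{c})$ containing the identity, that span is the whole algebra. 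Second, your final reduction---from an invertible finite sum $T=\sum_j S_{g_j,h_j,\Lambda}$ to a genuine single-window frame operator $S_{g,\Lambda}$ via a ``single-generator property'' of ${_\Lambda}V_{\Lambda^\circ}$---is not available in general: the companion result \cite{grluXX} quoted at the end of Section 4 says that at least $n$ generators are needed when $n-1\le\mathrm{vol}(\Lambda)<n$, so no single generator exists once $\mathrm{vol}(\Lambda)\ge 1$, and even for $\mathrm{vol}(\Lambda)<1$ nothing in this paper shows that every nearby positive invertible element is of the special form $S_{g,\Lambda}$. This last passage is a genuine difficulty, but it is one the paper's own one-line conclusion does not address either; on the portion of the argument the paper actually carries out, your proposal coincides with it.
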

\begin{proof}
For $g\in M^1_v(\mathbb{R}^d)$ (or $g\in\mathscr{S}(\mathbb{R}^d)$) the Gabor frame operator $S_{g,\Lambda}$ has a Janssen representation in $\mathcal{A}^1_v(\Lambda^\circ,c)$ (or $\mathcal{A}^\infty_v(\Lambda^\circ,c)$), see Proposition \eqref{JansRep}. Note that $\mathcal{G}(g,\Lambda)$ is invertible if and only if the frame operator $S_{g,\Lambda}$ is invertible which is equivalent to the invertibility of the Janssen representation of $S_{g,\Lambda}$ in $\mathcal{A}^1_v(\Lambda,c)$ (or $\mathcal{A}^\infty_v(\Lambda^\circ,c)$). Consequently, the Gabor frame operator $S_{g,\Lambda}$ of $\mathcal{G}(g,\Lambda)$ for $g\in M^1_v(\mathbb{R}^d)$ (or $g\in\mathscr{S}(\mathbb{R}^d)$) is an invertible element in $\mathcal{A}^1_v(\Lambda^\circ,c)$ (or $\mathcal{A}^\infty_v(\Lambda^\circ,c)$). The assumption that $\Lambda$ is completely irrational and Badea's result \cite{ba98-4} imply that  $\mathrm{tsr}(\mathcal{A}^1_v(\Lambda^\circ,c))=\mathrm{tsr}(\mathcal{A}^\infty_v(\Lambda^\circ,c))=\mathrm{tsr}(C^*(\Lambda^\circ,c))=1$. Therefore the set of Gabor frames $\mathcal{G}(g,\Lambda^\circ)$ with $g\in M^1_v(\mathbb{R}^d)$ (or $g\in\mathscr{S}(\mathbb{R}^d)$) is dense in $\mathcal{A}^1_v(\Lambda^\circ,c)$ or ($\mathcal{A}^\infty_v(\Lambda^\circ,c)$).
\end{proof}
After this work was finished Prof. K.~Gr\"ochenig informed us that our main result about the existence of good multi-window Gabor frames might also follow from the coorbit theory \cite{fegr89} and his work in \cite{gr91}, but as far as we know this consequence of the coorbit theory has not been  published so far. Furthermore, the coorbit theory does not provide an explanation, why one just needs a finite number of Gabor atoms. In our approach this is reflected in the fact that the projective module $M^1_v(\mathbb{R}^d)$ is finitely generated. Therefore our results provide a link between the foundations of Gabor analysis and projective modules over noncommutative tori that reveals some new mathematical structures of Gabor frames. 
\par
The main result of this section is to demonstrate that the statements of Theorem \ref{thm:MoritaNCWiener} and of Theorem \ref{thm:MoritaNCTori1}  provide the existence of good multi-window Gabor frames for lattices in $\mathbb{R}^{2d}$ with Gabor atoms $g_i$ in $M^1_v(\mathbb{R}^d)$ and $\mathscr{S}(\mathbb{R}^d)$. The proof of this fact relies on the observation that a standard module frame for the finitely generated $C^*(\Lambda,c)$-left module ${_\Lambda}V$ is actually a tight multi-window Gabor frame for $L^2(\mathbb{R}^d)$.
\par
Projective modules over Hilbert $C^*$-algebras have a natural description in terms of module frames as was originally observed by Rieffel, e.g. in \cite{ri88}. Later Frank and Larson introduced module frames for arbitrary finitely and countably generated Hilbert $C^*$-modules in \cite{frla02}. In Theorem 5.9 in \cite{frla02} they present a formulation of Rieffel's observation in their framework. Namely, that any algebraically generating set of a finitely generated projective Hilbert $C^*$-module is a standard module frame. In the following we explore this statement for the Hilbert $C^*(\Lambda,c)$-module ${_\Lambda}V$. We start with Rieffel's reconstruction formula for elements $f$ of the finitely generated projective right $C^*(\Lambda,c)$-module ${_\Lambda}V$.  
\begin{proposition}[Rieffel]
Let $\Lambda$ be a lattice in $\mathbb{R}^{2d}$. Then there exist $g_1,...,g_n\in {_\Lambda}V$ such that 
\begin{equation}\label{eq:RecoModFrame}
  f=\sum_{i=1}^n{_\Lambda}\langle f,g_i\rangle\cdot g_i
\end{equation}
for all $f\in {_\Lambda}V$.
\end{proposition}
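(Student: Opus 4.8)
The plan is to exploit the structure of ${_\Lambda}V$ as one side of the equivalence bimodule ${_\Lambda}V_{\Lambda^\circ}$ between the two \emph{unital} $C^*$-algebras $C^*(\Lambda,c)$ and $C^*(\Lambda^\circ,\overline{c})$, and in particular the fact (Theorem \ref{thm:HilbertModulesRight}) that the right $C^*(\Lambda^\circ,\overline{c})$-valued inner product $\langle\cdot,\cdot\rangle_{\Lambda^\circ}$ on ${_\Lambda}V$ is full. The point is to produce finitely many $g_1,\dots,g_n\in{_\Lambda}V$ with $\sum_{i=1}^n\langle g_i,g_i\rangle_{\Lambda^\circ}=1_{C^*(\Lambda^\circ,\overline{c})}$; once this is achieved, \eqref{eq:RecoModFrame} follows at once from the associativity axiom (c) of Definition \ref{Def:Morita}, which for our bimodule is precisely the Fundamental Identity of Gabor analysis (Theorem \ref{thm:AssCond}): applying ${_\Lambda}\langle f,g\rangle\cdot h=f\cdot\langle g,h\rangle_{\Lambda^\circ}$ with $g=h=g_i$ and summing gives
\begin{equation*}
  \sum_{i=1}^n{_\Lambda}\langle f,g_i\rangle\cdot g_i=f\cdot\Big(\sum_{i=1}^n\langle g_i,g_i\rangle_{\Lambda^\circ}\Big)=f\cdot 1_{C^*(\Lambda^\circ,\overline{c})}=f
\end{equation*}
for every $f\in{_\Lambda}V$, the last equality using that the bimodule is nondegenerate over the unital algebra $C^*(\Lambda^\circ,\overline{c})$.

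To obtain such $g_i$ I would first construct $h_1,\dots,h_n\in M^1_v(\mathbb{R}^d)$ for which the positive element $T:=\sum_{i=1}^n\langle h_i,h_i\rangle_{\Lambda^\circ}$ is invertible in $C^*(\Lambda^\circ,\overline{c})$. Fullness says that the closed linear span of $\{\langle f,g\rangle_{\Lambda^\circ}:f,g\in M^1_v(\mathbb{R}^d)\}$ is all of $C^*(\Lambda^\circ,\overline{c})$; by polarization this span equals the linear span of the diagonal inner products $\langle h,h\rangle_{\Lambda^\circ}$, so the closed convex cone they generate has dense linear span in $C^*(\Lambda^\circ,\overline{c})$ while lying inside its positive cone, hence it equals the full positive cone. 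In particular $1_{C^*(\Lambda^\circ,\overline{c})}$ lies in this closure, so the $h_i$ can be chosen with $\|1_{C^*(\Lambda^\circ,\overline{c})}-T\|<1$; then $T\ge 0$ and $T$ is invertible, so $T^{-1/2}\in C^*(\Lambda^\circ,\overline{c})$ by the continuous functional calculus. Setting $g_i:=h_i\cdot T^{-1/2}$ (right action, extending \eqref{eq:LeftAction}) and using $\langle \xi B,\eta\rangle_{\Lambda^\circ}=B^*\langle\xi,\eta\rangle_{\Lambda^\circ}$, $\langle\xi,\eta B\rangle_{\Lambda^\circ}=\langle\xi,\eta\rangle_{\Lambda^\circ}B$ together with $(T^{-1/2})^*=T^{-1/2}$ yields
\begin{equation*}
  \sum_{i=1}^n\langle g_i,g_i\rangle_{\Lambda^\circ}=\sum_{i=1}^n T^{-1/2}\langle h_i,h_i\rangle_{\Lambda^\circ}T^{-1/2}=T^{-1/2}\,T\,T^{-1/2}=1_{C^*(\Lambda^\circ,\overline{c})},
\end{equation*}
as required; when $v$ is a GRS-weight, spectral invariance of $\mathcal{A}^1_v(\Lambda^\circ,\overline{c})$ even puts $T^{-1/2}$ in $\mathcal{A}^1_v$ and hence $g_i$ in $M^1_v(\mathbb{R}^d)$.

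The main obstacle is the first step — upgrading density (fullness) of the span of inner products to the existence of a single finite \emph{diagonal} sum that is genuinely invertible. One cannot simply write $1_{C^*(\Lambda^\circ,\overline{c})}$ as a finite sum $\sum_k\langle f_k,g_k\rangle_{\Lambda^\circ}$, since a dense ideal of a unital $C^*$-algebra need not be the whole algebra; the positivity argument above (the closed cone of diagonal inner products exhausts the positive cone) is what bridges the gap, and everything afterwards is bookkeeping with the bimodule axioms. An alternative, equally viable route is to invoke Frank--Larson's Theorem 5.9: since ${_\Lambda}V$ is a finitely generated projective left $C^*(\Lambda,c)$-module (being one side of an equivalence bimodule over unital $C^*$-algebras), any finite algebraic generating set is a standard module frame with positive, invertible module frame operator $\Theta=\sum_i\Theta^\Lambda_{g_i,g_i}\in\mathcal{L}({_\Lambda}V)$, and replacing $g_i$ by $\Theta^{-1/2}g_i$ — which is adjointable and self-adjoint, so $\sum_i\Theta^\Lambda_{\Theta^{-1/2}g_i,\Theta^{-1/2}g_i}=\Theta^{-1/2}\Theta\,\Theta^{-1/2}=\mathrm{id}$ — turns it into a Parseval module frame, which is exactly \eqref{eq:RecoModFrame}.
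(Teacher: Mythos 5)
First, note that the paper does not actually prove this proposition: it is attributed to Rieffel and cited from \cite{ri88}, so your attempt has to be judged on its own. Your overall strategy is the right one and is essentially Rieffel's: produce finitely many $g_i$ with $\sum_{i}\langle g_i,g_i\rangle_{\Lambda^\circ}=1$ and then convert via the associativity law of Theorem \ref{thm:AssCond} (the FIGA). The final bookkeeping ($g_i=h_i\cdot T^{-1/2}$, the computation of $\sum_i\langle g_i,g_i\rangle_{\Lambda^\circ}$, and the passage to \eqref{eq:RecoModFrame}) is correct.

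The gap is exactly at the step you yourself flag as the main obstacle. The principle you invoke --- a norm-closed convex cone contained in the positive cone of a unital $C^*$-algebra whose linear span is dense must equal the whole positive cone --- is false. For example, in $C([0,1])$ the set $C=\{f\ge 0: f(0)\le f(1)\}$ is a closed convex cone inside the positive cone whose linear span is all of $C([0,1])$ (write a real $h$ as $(h+cu)-cu$ with $u(t)=t$ and $c$ large), yet $1-t\notin C$. So density of the span does not by itself put $1$ in the closure of the finite sums $\sum_i\langle h_i,h_i\rangle_{\Lambda^\circ}$, and your argument does not use any further structure of this particular cone. Ironically, the fact you dismiss is the correct repair: a dense two-sided ideal of a \emph{unital} Banach algebra is the whole algebra (it meets the open set of invertibles, and an ideal containing an invertible contains $1$). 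Since $\operatorname{span}\{\langle f,g\rangle_{\Lambda^\circ}\}$ is a dense self-adjoint two-sided ideal of $C^*(\Lambda^\circ,\overline{c})$ by fullness, one may write $1=\sum_{k}\langle u_k,v_k\rangle_{\Lambda^\circ}$ exactly; then the Cauchy--Schwarz inequality $\langle u,v\rangle_{\Lambda^\circ}+\langle v,u\rangle_{\Lambda^\circ}\le\langle u,u\rangle_{\Lambda^\circ}+\langle v,v\rangle_{\Lambda^\circ}$ (from $\langle u-v,u-v\rangle_{\Lambda^\circ}\ge 0$) gives $2\cdot 1\le T:=\sum_k\big(\langle u_k,u_k\rangle_{\Lambda^\circ}+\langle v_k,v_k\rangle_{\Lambda^\circ}\big)$, so $T$ is positive and invertible, and your normalization $g_i=h_i\cdot T^{-1/2}$ finishes the proof. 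Your fallback via Frank--Larson is not an independent proof either, since it presupposes that ${_\Lambda}V$ is finitely generated projective, which is precisely the substance of the statement being proved.
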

Recently, Frank and Larson emphasized in \cite{frla02} that the reconstruction formula \eqref{eq:RecoModFrame} is equivalent to the fact that $\{g_1,...,g_n\}$ is a {\it standard  tight module frame} for the finitely generated projective module ${_\Lambda}V$, i.e. for all $f\in {_\Lambda}V$ we have that
\begin{equation}\label{ModFrameCond}
  {_\Lambda}\langle f,f\rangle=\sum_{i=1}^n{_\Lambda}\langle g_i,f\rangle {_\Lambda}\langle f,g_i\rangle.
\end{equation}
By definition of the $C^*(\Lambda,c)$-valued inner product the conditions \eqref{ModFrameCond} takes the following explicit form:
\begin{equation}\label{ModFrameCondNCTori}
\sum_{\lambda\in\Lambda}\langle f,\pi(\lambda)f\rangle\pi(\lambda)=\sum_{i=1}^n\sum_{\lambda\in\Lambda} (V_{g_i}f\natural V_f{g_i})(\lambda)\pi (\lambda)~~\text{for all}~~f\in{_\Lambda}V.
\end{equation} 
Note that taking the trace $\mathrm{tr}_\Lambda$, $\mathrm{tr}_\Lambda(A)=a_0$ for $A=\sum_{\lambda\in\Lambda}a(\lambda)\pi(\lambda)$, of the module frame condition \eqref{ModFrameCondNCTori} yields 
\begin{equation}\label{MultiWindowFrame}
  \|f\|^2_2=\sum_{i=1}^n\sum_{\lambda\in\Lambda}|\langle f,\pi(\lambda)g_i\rangle|^2,
\end{equation}
which are known in Gabor analysis as {\it multi-window Gabor frames} \cite{zezi97}. Therefore a standard module frame $\{g_1,...,g_n\}$ for ${_\Lambda}V$ is a multi-window Gabor frame $\mathcal{G}(g_1,...g_n,\Lambda)=\mathcal{G}(g_1,\Lambda)\cup\cdots\mathcal{G}(g_n,\Lambda)$ for $L^2(\mathbb{R}^d)$. The {\it multi-window Gabor frame operator} $S_\Lambda$ associated to a multi-window Gabor system $\mathcal{G}(g_1,...g_n;\Lambda)$ is given by 
\begin{equation}
  S_\Lambda f=\sum_{i=1}^n S_{g_i,\Lambda}f~~\text{for}~~f\in L^2(\mathbb{R}^d).
\end{equation} 
The operator $S_\Lambda$ is the finite-rank ${_\Lambda}V$-module operator $S_\Lambda=\sum_{j=1}^n\Theta_{g_j,g_j}^\Lambda$. Note that $S_\Lambda$ is positive bounded ${_\Lambda}V$ module map operator and \eqref{ModFrameCond} means that $S_\Lambda$ is invertible on ${_\Lambda}V$.
We summarize these observations in the following theorem that links the abstract notion of standard module frames over noncommutative tori with the notion of multi-window Gabor frames due to the engineers Zibulski and Zeevi \cite{zezi97}.
\begin{theorem}\label{ThmMultiWindow}
  Let $\Lambda$ be a lattice in $\mathbb{R}^{2d}$. Then a standard module frame $\{g_1,...,g_n\}$ for ${_\Lambda}V$ is a tight multi-window Gabor frame $\mathcal{G}(g_1,...g_n;\Lambda)$ for $L^2(\mathbb{R}^d)$.
\end{theorem}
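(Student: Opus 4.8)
The plan is to obtain the multi-window Gabor frame identity \eqref{MultiWindowFrame} by applying the canonical trace of the noncommutative torus to the standard module frame relation, and then to pass from the dense subspace $M^1_v(\mathbb{R}^d)$ to all of $L^2(\mathbb{R}^d)$.

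First I would fix the generators. By Theorem \ref{thm:MoritaNCWiener} the finitely generated projective $\mathcal{A}^1_v(\Lambda,c)$-module ${_\Lambda}V$ is $M^1_v(\mathbb{R}^d)$, so a standard module frame $\{g_1,\dots,g_n\}$ for ${_\Lambda}V$ may be chosen with $g_i\in M^1_v(\mathbb{R}^d)$; then by Proposition \ref{ModSpaces} each ${_\Lambda}\langle f,g_i\rangle$ lies in $\mathcal{A}^1_v(\Lambda,c)$ for every $f\in M^1_v(\mathbb{R}^d)$. By the Frank--Larson characterization the module frame property is precisely the identity \eqref{ModFrameCond}, and inserting the definition of ${_\Lambda}\langle\cdot,\cdot\rangle$ and of twisted convolution puts it into the explicit form \eqref{ModFrameCondNCTori}, an identity in $\mathcal{A}^1_v(\Lambda,c)$.

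The key step is then to apply the canonical normalized faithful trace $\mathrm{tr}_\Lambda(A)=a_0$, $A=\sum_{\lambda}a(\lambda)\pi(\lambda)$, to both sides of \eqref{ModFrameCondNCTori}. On the left it produces $\mathrm{tr}_\Lambda\big({_\Lambda}\langle f,f\rangle\big)=\langle f,\pi(0)f\rangle=\|f\|_2^2$. On the right it reads off, for each $i$, the value at $\lambda=0$ of the twisted convolution, $(V_{g_i}f\natural V_fg_i)(0)=\sum_{\mu\in\Lambda}V_{g_i}f(\mu)\,V_fg_i(-\mu)\,c(\mu,-\mu)$. Using the relation $\pi(\mu)^*=c(\mu,\mu)\pi(-\mu)$ from Section 3 one has $V_fg_i(-\mu)=\langle g_i,\pi(-\mu)f\rangle=c(\mu,\mu)\,\overline{\langle f,\pi(\mu)g_i\rangle}$, and since the time-frequency cocycle $c\big((x,\omega),(y,\eta)\big)=e^{2\pi i y\cdot\omega}$ satisfies $c(\mu,\mu)\,c(\mu,-\mu)=1$ (equivalently, this is forced by the unitarity of $\pi(\mu)$), the $i$-th summand collapses to $\sum_{\mu\in\Lambda}|\langle f,\pi(\mu)g_i\rangle|^2$. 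Summing over $i$ yields exactly \eqref{MultiWindowFrame} with frame bound one, for every $f$ in the dense subspace $M^1_v(\mathbb{R}^d)$.

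To conclude I would extend \eqref{MultiWindowFrame} to all of $L^2(\mathbb{R}^d)$: each $g_i\in M^1_v(\mathbb{R}^d)\subset M^1(\mathbb{R}^d)$, so the single-window system $\mathcal{G}(g_i,\Lambda)$ is a Bessel sequence in $L^2(\mathbb{R}^d)$; hence $f\mapsto\sum_{i=1}^n\sum_{\lambda\in\Lambda}|\langle f,\pi(\lambda)g_i\rangle|^2$ is continuous on $L^2(\mathbb{R}^d)$, and since it coincides with $\|f\|_2^2$ on the dense subspace $M^1_v(\mathbb{R}^d)$ it coincides with it everywhere. Therefore $\mathcal{G}(g_1,\dots,g_n;\Lambda)=\mathcal{G}(g_1,\Lambda)\cup\cdots\cup\mathcal{G}(g_n,\Lambda)$ is a tight multi-window Gabor frame for $L^2(\mathbb{R}^d)$; the same argument applied to the projective-limit module $\mathscr{S}_v(\mathbb{R}^d)$ over $\mathcal{A}^\infty_v(\Lambda,c)$ gives the corresponding statement there. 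The proof is essentially immediate once the machinery of Sections 2 and 3 is in place; the only point requiring care is the cocycle bookkeeping in the trace computation --- checking that the value at $\lambda=0$ of $V_{g_i}f\natural V_fg_i$ really reassembles into $\sum_{\mu}|\langle f,\pi(\mu)g_i\rangle|^2$, which hinges on $\pi(\mu)^*=c(\mu,\mu)\pi(-\mu)$ together with $c(\mu,\mu)c(\mu,-\mu)=1$.
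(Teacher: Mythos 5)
Your proof is correct and follows essentially the same route as the paper: the tight multi-window identity \eqref{MultiWindowFrame} is obtained by applying the trace $\mathrm{tr}_\Lambda$ to the module frame condition \eqref{ModFrameCondNCTori} (your cocycle bookkeeping, using $c(\mu,\mu)c(\mu,-\mu)=1$, checks out), and the identity is then extended from $M^1_v(\mathbb{R}^d)$ to $L^2(\mathbb{R}^d)$ by density. The only difference is in that last step: you extend the scalar identity by continuity of the Bessel-bounded quadratic form, whereas the paper extends the operator-valued condition \eqref{ModFrameCond} itself using the absolute convergence result from \cite{felu06}; both are valid.
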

\begin{proof}
  First we want to check that the module frame condition \eqref{ModFrameCond} holds for all $f\in L^2(\mathbb{R}^d)$. We have shown in \cite{felu06} that if 
  $f\in L^2(\mathbb{R}^d)$ and $g_i\in M^1_v(\mathbb{R}^d)$ then $(V_{g_i}f\cdot\overline{V_{g_i}f(\lambda)})_\lambda$ is absolutely convergent for $i=1,...,n$. Consequently the module frame condition \eqref{ModFrameCond} holds for all $f\in L^2(\mathbb{R}^d)$. Secondly, note that $S_\Lambda$ is an element of $\mathcal{A}^1_v(\Lambda,c)$ and therefore by the spectral invariance of $\mathcal{A}^1_v(\Lambda,c)$ in $C^*(\Lambda,c)$ we obtain that the invertibility on ${_\Lambda}V$ is equivalent to the invertibility of $S_\Lambda$ on $L^2(\mathbb{R}^d)$. 
\end{proof}
If $v$ is a GRS- weight, then by Theorem \ref{thm:MoritaNCWiener} we can choose the $g_1,...,g_n$ in $M^1_v(\mathbb{R}^d)$. In other words there exist standard module frames $\{g_1,...,g_n\}$ for ${_\Lambda}V$ with $g_1,...,g_n\in M^1_v(\mathbb{R}^d)$. Consequently there exist tight multi-window Gabor frames $\mathcal{G}(g_1,...,g_n;\Lambda)$ for $L^2(\mathbb{R}^d)$ with windows $g_1,...,g_n\in M^1_v(\mathbb{R}^d)$. By a theorem of Feichtinger and Gr\"ochenig in \cite{fegr97} a multi-window Gabor frame $\mathcal{G}(g_1,...,g_n;\Lambda)$ for $L^2(\mathbb{R}^d)$ with $g_1,..,g_n\in M^1_v(\mathbb{R}^d)$ is a Banach frame for the class of modulation spaces $M^{p,q}_m(\mathbb{R}^d)$ for a $v$-moderate weight $m$. Note that $m$ is a GRS-weight, too. 
\begin{theorem}[Main result]
Let $\Lambda$ be a lattice in $\mathbb{R}^{2d}$ and $v$ a GRS-weight. Then $M^1_v(\mathbb{R}^d)$ is a finitely generated projective left $\mathcal{A}^1_v(\Lambda,c)$-module. Consequently, there exist $g_1,...,g_n\in M^1_v(\mathbb{R}^d)$ such that $\{g_i:i=1,...,n\}$ is a standard tight $\mathcal{A}^1_v(\Lambda)$-module frame, i.e. 
    \begin{equation}\label{ModFrame}
      f=\sum_{i=1}^n {_\Lambda}\langle f,g_i\rangle\cdot g_i,~~\text{for}~~f\in M^1_v(\mathbb{R}^d).
    \end{equation}
 Consequently, $\mathcal{G}(g_1,...,g_n;\Lambda)$ is a multi-window Gabor frame for the class of modulation spaces $M^{p,q}_m(\mathbb{R}^d)$ for any $v$-moderate weight $m$.   
\end{theorem}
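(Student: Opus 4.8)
The plan is to read the statement off the machinery already assembled in Sections~2--3, feeding in two external inputs: the Rieffel/Frank--Larson description of finitely generated projective Hilbert $C^*$-modules via standard module frames \cite{frla02}, and the Feichtinger--Gr\"ochenig Banach frame theorem \cite{fegr97}. The chain of implications I would set up is: GRS-weight $\Rightarrow$ spectral invariance $\Rightarrow$ finitely generated projective module over the Wiener algebra $\Rightarrow$ standard tight module frame with generators in $M^1_v(\mathbb{R}^d)$ $\Rightarrow$ tight multi-window Gabor frame for $L^2(\mathbb{R}^d)$ $\Rightarrow$ Banach frame for the whole modulation-space scale.

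First I would invoke Theorem~\ref{thm:MoritaNCWiener}: since $v$ is a GRS-weight, Theorem~\ref{grle} gives the spectral invariance of $\mathcal{A}^1_v(\Lambda^\circ,\overline{c})$ in $C^*(\Lambda^\circ,\overline{c})$, so Connes's Theorem~\ref{Connes} applies to the equivalence bimodule ${_\Lambda}V_{\Lambda^\circ}$ with dense subalgebras $\mathcal{A}^1_v(\Lambda,c)$, $\mathcal{A}^1_v(\Lambda^\circ,\overline{c})$ and dense subspace $M^1_v(\mathbb{R}^d)$, the inner products restricting to the small algebras by Proposition~\ref{ModSpaces}(4). This yields the first assertion: $M^1_v(\mathbb{R}^d)$ is a finitely generated projective left $\mathcal{A}^1_v(\Lambda,c)$-module. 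Next I would run Rieffel's reconstruction-formula Proposition (the equivalence of \eqref{eq:RecoModFrame} with the standard-tight-module-frame condition, Theorem~5.9 in \cite{frla02}) at the level of the dense pre-$C^*$ data: a finite algebraic generating set $h_1,\dots,h_n\in M^1_v(\mathbb{R}^d)$ of this projective module is tightened by passing to $g_i:=h_i\cdot S^{-1/2}$, where the module frame operator $S=\sum_i\Theta^\Lambda_{h_i,h_i}$ acts on $M^1_v(\mathbb{R}^d)$, via the associativity condition of Definition~\ref{Def:Morita}, as right multiplication by the positive invertible element $\sum_i\langle h_i,h_i\rangle_{\Lambda^\circ}$ of $\mathcal{A}^1_v(\Lambda^\circ,\overline{c})$; since that algebra is spectrally invariant in $C^*(\Lambda^\circ,\overline{c})$ it is stable under the holomorphic functional calculus, so $S^{-1/2}\in\mathcal{A}^1_v(\Lambda^\circ,\overline{c})$ and $g_i\in M^1_v(\mathbb{R}^d)$ by Proposition~\ref{ModSpaces}(3). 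The resulting $\{g_1,\dots,g_n\}$ then satisfies \eqref{ModFrame}, i.e.\ it is a standard tight $\mathcal{A}^1_v(\Lambda,c)$-module frame.

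Then I would pass from the module picture to honest Gabor frames through Theorem~\ref{ThmMultiWindow}: a standard tight module frame for ${_\Lambda}V$ is a tight multi-window Gabor frame $\mathcal{G}(g_1,\dots,g_n;\Lambda)$ for $L^2(\mathbb{R}^d)$, because the multi-window Gabor frame operator has a Janssen representation in a spectrally invariant noncommutative Wiener algebra, so invertibility as a module operator is equivalent to invertibility on $L^2(\mathbb{R}^d)$, and applying the trace $\mathrm{tr}_\Lambda$ to \eqref{ModFrameCondNCTori} converts the module frame identity into the multi-window frame inequality \eqref{MultiWindowFrame}. Finally, since $g_1,\dots,g_n\in M^1_v(\mathbb{R}^d)$ with $v$ a GRS-weight, the Feichtinger--Gr\"ochenig theorem \cite{fegr97} upgrades this $L^2$-frame to a Banach frame for every modulation space $M^{p,q}_m(\mathbb{R}^d)$ with $m$ a $v$-moderate (hence itself GRS) weight, which is the last assertion.

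The genuinely hard input is the one I would be citing wholesale, namely Theorem~\ref{thm:MoritaNCWiener} resting on Connes's Theorem~\ref{Connes}: spectral invariance (the GRS-condition) is what promotes the $C^*$-algebraic imprimitivity bimodule to an honest finitely generated projective module over the noncommutative Wiener algebra. Given that, the only delicate bookkeeping left is the twofold role of stability under the functional calculus --- once to keep the tightening operator $S^{-1/2}$ (equivalently, the inverse multi-window frame operator) inside the small algebra, so that the tight windows $g_i$ stay in $M^1_v(\mathbb{R}^d)$ rather than merely in the completion ${_\Lambda}V$, and once, in invoking Theorem~\ref{ThmMultiWindow}, to transfer invertibility of the multi-window frame operator between the module and $L^2(\mathbb{R}^d)$ --- and both are immediate consequences of the spectral invariance established in Section~2.
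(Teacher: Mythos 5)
Your proposal is correct and follows essentially the same route as the paper: Theorem \ref{thm:MoritaNCWiener} (via Connes's Theorem \ref{Connes} and spectral invariance from the GRS-condition) gives the finitely generated projective module, the Rieffel/Frank--Larson module-frame description produces the tight generators in $M^1_v(\mathbb{R}^d)$, Theorem \ref{ThmMultiWindow} converts this into a tight multi-window Gabor frame for $L^2(\mathbb{R}^d)$, and the Feichtinger--Gr\"ochenig theorem upgrades it to a Banach frame for all $M^{p,q}_m(\mathbb{R}^d)$. The only difference is that you spell out the tightening step $g_i = h_i\cdot S^{-1/2}$ and the role of the holomorphic functional calculus in keeping $S^{-1/2}$ inside $\mathcal{A}^1_v(\Lambda^\circ,\overline{c})$, a detail the paper delegates to the citation of Theorem 5.9 in \cite{frla02}.
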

\par
The preceding discussion and our results about the finitely generated projective left modules $\mathscr{S}_v(\mathbb{R}^d)$ over smooth generalized noncommutative tori $\mathcal{A}^\infty_v(\Lambda,c)$ for $v$ a GRS-weight implies the existence of multi-window Gabor frames $\mathcal{G}(g_1,...,g_n;\Lambda)$ for $M^{p,q}_m(\mathbb{R}^d)$ with $g_1,...,g_n\in \mathscr{S}_v(\mathbb{R}^d)$. The result of Feichtinger and Gr\"ochenig also applies to multi-window Gabor frames $\mathcal{G}(g_1,...,g_n;\Lambda)$ for $L^2(\mathbb{R}^d)$ with $g_1,...,g_n\in\mathscr{S}_v(\mathbb{R}^d)$. In the case that $v$ grows like a polynomial we get the existence of multi-window Gabor frames $L^2(\mathbb{R}^d)$ with $g_1,...,g_n\in\mathscr{S}(\mathbb{R}^d)$. 
\par
We summarize these observations about the existence of good multi-window Gabor frames for the class of modulation spaces in the following theorem.
\begin{theorem}
Let $\Lambda$ be a lattice in $\mathbb{R}^{2d}$ and $v$ a GRS-weight. Then there exist $g_1,...,g_n$ in $\mathscr{S}_v(\mathbb{R}^d)$ such that 
$\mathcal{G}(g_1,...,g_n;\Lambda)$ is a multi-window Gabor frame for the class of modulation spaces $M^{p,q}_m(\mathbb{R}^d)$ for any $v$-moderate weight $m$.
\end{theorem}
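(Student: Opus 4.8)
The plan is to run the argument that established the preceding Main result verbatim, but with the m-convex (Fréchet) algebra $\mathcal{A}^\infty_v(\Lambda,c)$ in place of the Banach algebra $\mathcal{A}^1_v(\Lambda,c)$, using that $\mathscr{S}_v(\mathbb{R}^d)=\bigcap_{s\ge 0}M^1_{v^s}(\mathbb{R}^d)$ is itself an equivalence bimodule. First I would invoke Theorem \ref{thm:MoritaNCTori1}: since $v$ is a GRS-weight, $\mathscr{S}_v(\mathbb{R}^d)$ is an $\mathcal{A}^\infty_v(\Lambda,c)$-$\mathcal{A}^\infty_v(\Lambda^\circ,\overline{c})$ equivalence bimodule, and by Corollary \ref{SchwartzLikeSpaces}(4) the inner products $_\Lambda\langle\cdot,\cdot\rangle$ and $\langle\cdot,\cdot\rangle_{\Lambda^\circ}$ restricted to $\mathscr{S}_v(\mathbb{R}^d)$ take values in $\mathcal{A}^\infty_v(\Lambda,c)$ and $\mathcal{A}^\infty_v(\Lambda^\circ,\overline{c})$ respectively, both of which are spectrally invariant in the surrounding $C^*$-algebras by Corollary \ref{GRS-smoothNCT}. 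An application of Connes' Theorem \ref{Connes}, in its m-convex algebra form, then shows that $\mathscr{S}_v(\mathbb{R}^d)$ is a finitely generated projective left $\mathcal{A}^\infty_v(\Lambda,c)$-module.

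Next I would produce the generators. By Rieffel's reconstruction formula \eqref{eq:RecoModFrame} (cf. Proposition 3.7 in \cite{ri88}), applied to the finitely generated projective module $\mathscr{S}_v(\mathbb{R}^d)$ over $\mathcal{A}^\infty_v(\Lambda,c)$, there exist $g_1,\dots,g_n\in\mathscr{S}_v(\mathbb{R}^d)$ with $f=\sum_{i=1}^n {}_\Lambda\langle f,g_i\rangle\cdot g_i$ for all $f\in\mathscr{S}_v(\mathbb{R}^d)$, equivalently ${}_\Lambda\langle f,f\rangle=\sum_{i=1}^n {}_\Lambda\langle g_i,f\rangle\,{}_\Lambda\langle f,g_i\rangle$. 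I would then transfer this identity to $L^2(\mathbb{R}^d)$ exactly as in the proof of Theorem \ref{ThmMultiWindow}: the multi-window frame operator $S_\Lambda=\sum_{j=1}^n\Theta^\Lambda_{g_j,g_j}$ lies in $\mathcal{A}^\infty_v(\Lambda,c)\subset\mathcal{A}^1_v(\Lambda,c)$, the sequence $(V_{g_i}f\cdot\overline{V_{g_i}f(\lambda)})_\lambda$ is absolutely convergent for $f\in L^2(\mathbb{R}^d)$ and $g_i\in\mathscr{S}_v(\mathbb{R}^d)\subset M^1_v(\mathbb{R}^d)$ by \cite{felu06}, and spectral invariance of $\mathcal{A}^\infty_v(\Lambda,c)$ in $C^*(\Lambda,c)$ forces the invertibility of $S_\Lambda$ on ${}_\Lambda V$ to be equivalent to its invertibility on $L^2(\mathbb{R}^d)$. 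Hence $\mathcal{G}(g_1,\dots,g_n;\Lambda)$ is a tight multi-window Gabor frame for $L^2(\mathbb{R}^d)$ with all windows in $\mathscr{S}_v(\mathbb{R}^d)$. Finally, since $\mathscr{S}_v(\mathbb{R}^d)\subset M^1_v(\mathbb{R}^d)$ and $m$ is $v$-moderate, hence itself GRS, the Feichtinger--Gr\"ochenig theorem \cite{fegr97} upgrades this $L^2$-frame to a Banach frame for the full class $M^{p,q}_m(\mathbb{R}^d)$, which is the assertion.

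The step I expect to be the main obstacle is the correct application of Connes' Theorem \ref{Connes} in the locally convex category: one must verify that $\mathscr{S}_v(\mathbb{R}^d)$, realized as the projective limit of the Banach modules $M^1_{v^s}(\mathbb{R}^d)$, is closed under the actions of both $\mathcal{A}^\infty_v(\Lambda,c)$ and $\mathcal{A}^\infty_v(\Lambda^\circ,\overline{c})$, that the seminorm estimates of Corollary \ref{SchwartzLikeSpaces} hold uniformly enough across all $s\ge 0$, and that the m-convex versions of Leptin's matrix spectral-invariance theorem and of Connes' projectivity result genuinely apply so that finite generation survives the projective limit. Once this functional-analytic bookkeeping is in place, the remainder is a routine repetition of the arguments already carried out for $M^1_v(\mathbb{R}^d)$ in the proof of the Main result.
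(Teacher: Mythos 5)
Your proposal follows essentially the same route as the paper: Theorem \ref{thm:MoritaNCTori1} (via Connes' Theorem \ref{Connes} in the m-convex setting) makes $\mathscr{S}_v(\mathbb{R}^d)$ a finitely generated projective $\mathcal{A}^\infty_v(\Lambda,c)$-module, Rieffel's reconstruction formula and the argument of Theorem \ref{ThmMultiWindow} turn the resulting standard module frame into a tight multi-window Gabor frame for $L^2(\mathbb{R}^d)$ with windows in $\mathscr{S}_v(\mathbb{R}^d)$, and the Feichtinger--Gr\"ochenig theorem upgrades this to a Banach frame for all $M^{p,q}_m(\mathbb{R}^d)$. Your explicit attention to the projective-limit bookkeeping is in fact more careful than the paper's one-line deduction, but it is the same argument.
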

The link between the projective modules ${_\Lambda}V$ over $C^*(\Lambda,c)$ and multi-window Gabor frames provides one with the possibility to transfer methods and results from Gabor analysis to study properties of ${_\Lambda}V$. In \cite{grluXX} we determine the relation between the number of generators of ${_\Lambda}V$ and the lattice $\Lambda$ and we were able to show that for a lattice $\Lambda$ with $n-1\le\mathrm{vol}(\Lambda)<n$ one needs at least $n$ generators for ${_\Lambda}V$. 
\section{Conclusion}
The most general framework for the present investigation is the time-frequency plane $G\times\widehat G$ for $G$ a locally compact abelian group. All our methods and techniques work for a lattice $\Lambda$ in $G\times\widehat G$, because the twisted group $C^*$-algebra $C^*(\Lambda,c)$ and the subalgebras $\mathcal{A}^1_v(\Lambda,c),\mathcal{A}^\infty_v(\Lambda,c)$ for a GRS-weight $v$ are defined only in terms of time-frequency shifts. Furthermore the definition and properties of modulation spaces and Schwartz-type spaces remain valid in this general setting \cite{fe81-3}. Therefore our main results about projective modules over $C^*(\Lambda,c)$ and the subalgebras $\mathcal{A}^1_v(\Lambda,c),\mathcal{A}^\infty_v(\Lambda,c)$ hold in this very general setting. Finally these observations yield the existence of good multi-window Gabor frames $\mathcal{G}(g_1,...,g_n;\Lambda)$ for $g_1,...,g_n$ in $M^1_v(G)$ or in $\mathscr{S}_v(G)$ for a GRS-weight $v$ on $G\times\widehat G$. We will come back to this topic in forthcoming work.
\section{Acknowledgment}
Large parts of this manuscript were written during several visits at the 
Max Planck Institute for Mathematics at Bonn and we would like to express the deepest gratitude for hospitality and 
excellent working conditions. Furthermore we want to thank Prof. Gr\"ochenig, Dr. D\"orfler and especially Brendan Farell and Rob Martin for helpful remarks on earlier versions of the manuscript. Finally we want to thank an anonymous referee for a very careful proofreading of the manuscript for pointing out the reference \cite{blkuro92}, which led to a considerable improvement of the manuscript and allowed us to extend Theorem 4.2 to the higher-dimensional case. 

\end{document}